\documentclass[12pt]{article}
\usepackage[T1]{fontenc}
\usepackage[utf8]{inputenc}
\usepackage[english]{babel}

\usepackage{amsthm,amssymb,amsmath}
\usepackage[margin=3.5cm]{geometry}
\usepackage{graphicx}
\usepackage{refcount, color, soul}
\setcounter{tocdepth}{1}
\usepackage{subcaption}

\newtheorem{prop}{Proposition}[subsection]
\newtheorem{theo}[prop]{Theorem}

\newtheorem{Wtheo}[prop]{Weierstrass Preparation Theorem}
\newtheorem{cor}[prop]{Corollary}
\newtheorem{lem}[prop]{Lemma}
\theoremstyle{definition}
\newtheorem{deff}[prop]{Definition}
\newtheorem{notation}[prop]{Notation}
\theoremstyle{remark}
\newtheorem{rem}[prop]{Remark}

\let\joli=\mathcal 
\providecommand\keywords[2]{{\footnotesize\noindent\textbf{Keywords} --- #1%
  \par\noindent 2020 \textit{Mathematics Subject Classification}.\null{} #2}}
\newcommand\ftilde{\tilde f}
\newcommand\gtilde{\tilde g}
\newcommand\Cp{\mathbb{C}} \def\R{\mathbb{R}}
\newcommand\N{\mathbb{N}} \def\Z{\mathbb{Z}}
\newcommand\Ss{\mathbb S}
\newcommand\Pp{\mathbb P}
\newcommand\Opi{\Omega_{\delta}}
\newcommand\inv{^{-1}}

\newcommand\vt{v^{\scriptscriptstyle{1\over 2}}}
\newcommand\Tt{{T_{\scriptscriptstyle{1\over 2}}}}

\newcommand\vun{v^{\scriptscriptstyle1}}
\newcommand\zpoint{{\dot z}}

\newcommand\zbar{{\overline z}}

\newcommand\wbar{{\overline w}}

\let\phi=\varphi
\let\ep=\varepsilon
\let\eps=\varepsilon
\def\epbar{{\overline \ep}}
\def\wep{{\widetilde \ep}}
\newcommand\hatep{{\widehat\ep}}
\newcommand\hatepbar{{\ol\hatep}}

\let\del=\partial

\let\ol=\overline
\newcommand\STt{\Sigma\circ\Tt}
\def\STtt_#1{\Sigma\circ T_{\scriptscriptstyle#1}}
\def\Ttt_#1{{T_{\scriptscriptstyle{#1}}}}
\expandafter\let\expandafter\over\csname @@over\endcsname
\expandafter\let\expandafter\atop\csname @@atop\endcsname

\let\wt=\widetilde

\def\raggedcenter{\leftskip=0pt plus4em \rightskip=\leftskip%
  \parfillskip=0pt \spaceskip=.3333em \xspaceskip=.5em
  \pretolerance=9999 \tolerance=9999 \parindent=0pt
  \hyphenpenalty=9999 \exhyphenpenalty=9999 }

\usepackage{caption}
\captionsetup{width=0.7\textwidth}
\usepackage[hidelinks]{hyperref}
\hypersetup{
  pdftitle={Analytic Classification of Germs of Parabolic Antiholomorphic 
    Diffeomorphism of Codimension k},
  pdfauthor={Jonathan Godin, Christiane Rousseau},
  pdfsubject={Complex dynamical systems},
  pdfkeywords={Discrete dynamical systems,
  antiholomorphic dynamics, parabolic fixed point, 
  classification, unfoldings, modulus of analytic classification}
}

\begin{document}
\def\d{{\rm d}}

\title%[Unfoldings of Parabolic Fixed Point of Codimension 1]
  {Analytic Classification of Generic Unfoldings of Antiholomorphic Parabolic Fixed Points of Codimension~1}

\author{Jonathan Godin and Christiane Rousseau\protect\footnote{The first 
  author was supported by a FRQ-NT PhD scholarship. 
  The second author is supported by NSERC in Canada.}}
%\address{Universit\'e de Montr\'eal, C.P.\null{} 6128, Succ.\null{} Centre-Ville,\\
%  Montr\'eal, Qc, Canada, H3C 3J7\\
%  \email{godinj@dms.umontreal.ca, rousseac@dms.umontreal.ca}\\}

\let\Centering=\centering
\let\centering=\raggedcenter
\maketitle
\let\centering=\Centering
\let\Centering=\undefined

\tableofcontents

\newpage

\begin{abstract}
We classify generic unfoldings of germs of antiholomorphic
  diffeomorphisms with a parabolic point of codimension~1 (i.e.~a double fixed point) under conjugacy. These generic unfoldings depend on one real parameter. 
  The classification is done by assigning to each such germ a weak and a strong modulus, which are unfoldings of the modulus assigned to the antiholomorphic parabolic point. The weak and the strong moduli are unfoldings of  the \'Ecalle-Voronin modulus  of the second  iterate of the germ which is a real unfolding of a holomorphic parabolic point. A preparation of the unfolding allows to identify one real analytic \emph{canonical parameter} and any conjugacy between two prepared generic unfoldings preserves the canonical
  parameter. We also solve the realisation problem by giving necessary and sufficient conditions for a strong modulus to be realized. This is done simultaneously with solving the probem of the existence of an antiholomorphic square root to a germ of generic analytic unfolding of a holomorphic parabolic germ. As a second application we establish the condition for the existence of a real analytic invariant curve.
  \end{abstract}
\smallskip
\noindent\keywords{Discrete dynamical systems,
  antiholomorphic dynamics, parabolic fixed point, 
  classification, unfoldings, modulus of analytic classification}
  {37F46 32H50 37F34 37F44}

\clearpage

\section{Introduction}
In this paper, we are interested in unfoldings of 
antiholomorphic germs with a parabolic fixed point of codimension~1,
i.e.~of multiplicity 2. Such an antiholomorphic germ $f_0\colon(\Cp,0)\to (\Cp,0)$ has the form
$$
  f_0(z) = \zbar + {1\over 2}\zbar^2 
    + \left({1\over 4}-{b\over 2}\right) \zbar^3 + o(\zbar^3),
$$
in some local coordinate and for some invariant $b\in\R$.

This study is part of a large program to understand the local dynamics of singularities in low-dimensional complex dynamical systems, with particular emphasis on $1$-resonant singularities (i.e.~all resonance relations among the eigenvalues/multipliers are consequences of a single one). The  simplest case is that of the parabolic point (multiple fixed point) of a holomorphic germ $f_0\colon(\Cp,0)\to (\Cp,0)$. In that case one classifies the germs under conjugacy, i.e.~local analytic changes of coordinate: the classification has been given by \'Ecalle and Voronin, and to each germ is associated its classifying object, its \emph{\'Ecalle-Voronin modulus} (see \cite{V}, \cite{E}, or \cite{nonlinear}, or \cite{lecturesDiffEq}).

Antiholomorphic dynamics has been mainly studied in the context of Julia sets of unicritical antiholomophic polynomials $\bar z^d+c$, where the connectedness loci are given as multicorns:  see \cite{multiI}, \cite{multiNotPath}, \cite{nonlanding}, \cite{multiII}. There it can be seen that the boundary points of hyperbolic components of odd period consist only of parabolic parameter values: this boundary is composed of real analytic arcs corresponding to generic codimension 1 bifurcations of antiholomorphic parabolic points linked at higher codimension isolated points. 
These references generalize some tools from the holomorphic case: Fatou coordinates, Ecalle cylinders. 

The analytic classification of germs of antiholomorphic diffeomorphisms $f_0$ with a parabolic fixed point of arbitrary codimension $k$ is
given in~\cite{GR}. For such a germ $f_0$, the holomorphic germ $g_0=f_0\circ f_0$ has a parabolic fixed point of the same codimension $k$. Hence it is no surprise that a modulus of classification is given by the \'Ecalle-Voronin modulus of $g_0$, namely  a modulus composed
of the codimension, the formal invariant $b$, and $2k$ horn maps. However not all \'Ecalle-Voronin moduli are realisable: indeed $k$ horn maps determine the $k$ other horn maps. Moreover, the formal invariant $b$ is real. 
%One of the key observation to obtain the classication is that the second
%iterate is a holomorphic germ with a parabolic fixed point.
%In fact, a horn map of the modulus of $f_0$ is enough
%to obtain a pair of horn maps that describes the dynamics of $f_0\circ f_0$. 
%With the modulus,
%we see that two ``random'' antiholomorphic germs with a parabolic
%fixed point will, in genral, not be equivalent, i.e.~holomorphically
%conjucate.

In dimension $1$, the $1$-resonant singularities occur as the coalescence of fixed points and/or periodic orbits. Hence it is natural to embed the diffeormorphism in a family of diffeomorphims, called an \emph{unfolding} separating the singularities into simple ones. Then each singularity is organizing rigidly the dynamics in its neighbourhood and the local \lq\lq models\rq\rq\ may not match globally. It is the limit of this mismatch that produces the \'Ecalle-Voronin modulus. This is why it is
natural to study generic unfoldings of $1$-resonant singularities. The  program has been performed for holomorphic parabolic points in codimension $k$ (\cite{germeDeploie},  \cite{realisation} and   \cite{R1}), and for resonant diffeormorphisms (\cite{germeResonant} and \cite{R2}). In this paper we consider the analytic classification of generic unfoldings of codimension $1$ antiholomorphic parabolic points. 

The condition for a fixed point to be parabolic is 
$|f_0'(0)| = 1$, a condition of \emph{real} codimension~1.
Hence it is natural to unfold $f_0$ with one real parameter $\ep$.
An unfolding $f_\ep$ will either have two fixed points,
or a periodic orbit of period 2, or a parabolic fixed point.
In the case of two fixed points, the dynamics near each fixed
point is very simple: one is attractive and the other one is repulsive,
so they are both locally linearisable. On the other hand, when
we have a periodic orbit, the dynamics can be very complicated
when the multiplier lies on the unit circle. 

For a generic unfolding $f_\ep$ of an antiholomorphic parabolic germ $f_0$ we are able to identify a \emph{canonical parameter}, which is a real analytic invariant. Then an equivalence between two generic unfoldings will preserve the canonical parameter. A \emph{weak modulus of classification} is given by an unfolding of the modulus of $f_0$.  However, when we only work with real values of the parameters, we are not able to prove that the conjugacy between two unfoldings depends real analytically on the parameter: this is why we only speak of \emph{weak} modulus of classification. To remedy, we need to extend $f_\ep$ antiholomorphically to complex values of the parameter and to introduce a \emph{strong modulus of classification}. Then $g_\ep=f_\epbar\circ f_\ep$ depends holomorphically on $\ep$ and we are able to prove the existence of a conjugacy depending real analytically on the parameter. This is done using the tools developed for the holomorphic case.

After a classification problem is solved the natural question to ask is the \emph{modulus space}. 
We have been able to give the \emph{strong modulus set} (without a topology) for generic unfoldings  of an antiholomorphic parabolic germ of codimension 1, i.e.~necessary and sufficient conditions for a modulus to be realized. The proof uses a detour. Indeed the problem is solved for unfoldings $g_\ep$ of a parabolic germ (see \cite{realisation}). 
Hence it suffices to solve the problem of the existence of an antiholomorphic unfolding $f_\ep$ depending antiholomorphically on $\ep$ such that $g_\ep=f_\epbar\circ f_\ep$ (which we call an \emph{antiholomorphic square root}), and this problem is easy to solve.

The idea of the necessary condition for the realisation is the following: the description of the modulus is done in the parameter $\hatep$ belonging to the universal covering punctured at $0$ of the complexified canonical parameter: we take $\arg\hatep \in (-\pi+\delta, 3\pi -\delta)$ for some $\delta\in (0,\frac{\pi}2)$. Hence, when $\arg\ep\in (-\pi+\delta, \pi-\delta)$, we have two different unfoldings of the modulus at $\ep=0$. An obvious necessary condition is that these two different unfoldings describe the same dynamics. This condition is called the \emph{compatibility condition}, and it turns out to also be sufficient.
Hence to obtain the realisation in the antiholomorphic case it suffices to combine the compatibility condition and to solve the problem of finding  a necessary and sufficient condition for the extraction of an antiholomophic square root $f_\ep$ of a holomorphic parabolic unfolding $g_\ep$. By this we mean a germ depending antiholomorphically on the complex parameter $\ep$ and satisfying $g_\ep=f_\epbar\circ f_\ep$.

The study of the unfoldings of $f_0$ can help shed some
light on \emph{why} there are obstructions to certain
simple geometric behaviours. For instance, it is proved in~\cite{GR} that under
a condition of infinite codimension, $f_0$ will have an
invariant real analytic curve. What is the obstruction?
For a generic unfolding $f_\ep$ of $f_0$,
there will be some values of the parameter for which $f_\ep$
has two simple fixed points. The linearisation near each
fixed point has one invariant analytic curve. There
is no reason for these local invariant curves to match globally and this is why the mismatch is the generic situation in the unfolding.
If the mismatch persists at the limit when $\ep\to 0$, then
we can expect that $f_0$ has no real analytic invariant curve.

The second example, already mentioned above of a rare geometric phenomenon is 
the existence of an antiholomorphic square root for a holomorphic parabolic germ $g_0$.
Such a germ is said
to have an \emph{antiholomorphic square root} if there exists an antiholomorphic
parabolic germ $f_0$ such that $f_0\circ f_0 = g_0$. The existence
of such a germ is a phenomenon of infinite codimension. Why? 
When we unfold $g_0$ by a parameter $\ep$, each fixed point of 
$g_\ep = f_\ep \circ f_\ep$ has a first return map that 
describes the dynamics locally. For a generic unfolding of
$g_0$, the first return maps are independent.
However the existence of $f_\ep$ forces the first return maps
to be conjugate for parameter values for which $f_\ep$
has a periodic orbit. Since we can have very complicated dynamics of very diverse types
for many values of the parameter, we see that this is a 
very strong condition, which explains the very high codimension. 

The paper is organized as follows. 

After a brief section of preliminaries on antiholomorphic parabolic germs, we define in Section~\ref{sec:pt para} generic 
unfoldings $f_\ep$ of  antiholomorphic parabolic germs $f_0$ of codimension $1$. We also determine their canonical parameter $\ep$
and we give a \emph{prepared form} for generic
unfoldings.

In Section~\ref{sec:fatou}, we prove the existence of Fatou
coordinates for $f_\ep$. The Fatou coordinates are what
constitute the sectorial normalisation, i.e.~almost unique changes
of coordinates defined on portions of the domain
that conjugate $f_\ep$ to the \lq\lq normal form\rq\rq. 
%We introduce
%the time coordinate and the translation domains, where
%the Fatou coordinates will be defined. 

In Section~\ref{sec:orbits}, we describe  the space
of orbits of $f_\ep$ for the different values of $\ep$, using the transition functions between the Fatou coordinates.

In Section~\ref{sec:weak class}, we define the weak modulus of
classification and we prove a weak version of the Classification Theorem, where we miss the analytic dependence of the conjugacy on the parameter.

In Section~\ref{sec:strong class}, we give the strong version
of the Classification Theorem, which requires extending $f_\ep$ antiholomorphically to complex values of the parameter and defining a strong modulus of classification. 

Lastly, in Section~\ref{sec:app}, we apply the results to discuss the geometric interpretation of the modulus.  
We find a necessary and sufficient condition to extract an antiholomorphic square root
of a holomorphic unfolding of a parabolic point. We use it to give the modulus set.

\section{Preliminaries}
\subsection{Notation} For the whole paper,
we will use the following notation~:
\begin{itemize}
  \item $\sigma(z) = \zbar$ is the complex conjugation;
  \item $\tau(w) = {1 \over \overline w}$ is the antiholomorphic inversion;
  \item $T_C(Z) = Z + C$ is the translation by $C\in \Cp$;
  \item $L_c(w) = cw$ is the linear transformation with multiplier $c\in \Cp$;
%  \item $v^t$ is the time-$t$ of the vector field
  %  \begin{equation}\zpoint = v(z) = {z^{k+1} \over 1 + bz^k}.\end{equation}
\end{itemize}
\bigbreak

\subsection{Antiholomorphic Parabolic Fixed Points}
A function $f\colon U \to \Cp$ defined on a domain
$U\subseteq \Cp$ is antiholomorphic if
${\del f\over \del z} \equiv 0$ on $U$.
From this definition, together with the chain rule,
it follows that antiholomorphy is an intrinsic 
property of $f$ under holomorphic changes of variable.
Equivalently, $f\colon z\mapsto f(z)$ is antiholomorphic
if $f\circ\sigma\colon z\mapsto f(\zbar)$ is holomorphic, therefore
$f(z)$ expands in a power series in terms of $\zbar$.

Let $f\colon (\Cp,0) \to (\Cp,0)$ be a germ of antiholomorphic
diffeomorphism (in the $\zeta$-variable) that fixes the origin. Recall that $0$ is a
\emph{parabolic fixed point} if it is an isolated fixed point
and if $\left|{\del f\over \del \overline{\zeta}}(0)\right| = 1$. It is proved
in~\cite{GR} that there exists a polynomial change of coordinate
$z = p(\zeta)$ such that $f$ takes the form
\begin{equation}\label{eq:forme prenormale k}
  \ftilde(z) = \zbar + {1\over 2}\zbar^{k+1} 
    + \left({k+1 \over 8} - {b\over 2}\right) \zbar^{2k+1}
    + o(\zbar^{2k+1}),
\end{equation}
for some numbers $k\in \N^\ast$, $b\in\R$, respectively
called the \emph{codimension} and the \emph{formal invariant} 
of $f$. The codimension is linked to the multiplicity of the
fixed point: a fixed point of codimension~$k$ has multiplicity
$k+1$. We concentrate on the codimension~1 case, so
we assume that $k=1$ for the rest of the paper. We further
suppose that $f$ is always in a coordinate such that 
\begin{equation}\label{eq:prenormale 1}
  f(z) = \zbar + {1\over 2} \zbar^2 
    + \left({1 \over 4} - {b\over 2}\right) \zbar^3
    + o(\zbar^4).
\end{equation}
The formal invariant allows us to define a formal normal
form, namely $\sigma \circ  \vt$, the composition of the time-${1\over 2}$ map of
the vector field
\begin{equation}\label{eq:champ k}
  \dot z = v(t) = {z^{2} \over 1 + bz}
\end{equation}
with the complex conjugation. In other words, 
there exists a formal change of coordinate $h$ such that 
$h\circ f\circ h\inv = \sigma\circ \vt$.

The local dynamics of $f$ at its fixed point is also
described in~\cite{GR}. Let us briefly review the important
features. First, recall that the composite $f \circ f$ is a germ 
of holomorphic diffeomorphism with a holomorphic parabolic 
fixed point at the origin. The dynamics of $f\circ f$ is
embedded in the dynamics of $f$; this has several consequences,
for instance the codimension and the formal invariant of
$f\circ f$ are the same as the ones of $f$. It is well
known that the dynamics of $f\circ f$ is described using
the so-called \emph{Écalle horn maps}. 
In codimension~$1$, the space of orbits  of $f\circ f$
is a quotient of the
disjoint union of $2$ spheres by the identification coming from a pair of  germs $(\psi^0,\psi^\infty)$ 
of normalized diffeomorphisms, where $\psi^0$ sends $0$ to $0$ and $\psi^\infty$ sends $\infty$ to $\infty$, as in Figure~\ref{fig:esp orb g}, and by further identification of $0$ and $\infty$.
This orbit space depends only on the equivalence class $[\psi^0,\psi^\infty]$,
where the equivalence relation is given by
\begin{equation}\label{eq:relation g}
    (\psi^0,\psi^\infty)
      \sim_\Cp({\psi^0}',{\psi^\infty}')
     \Leftrightarrow \exists C\in \Cp,\ \psi^{0,\infty} 
      = L_C \circ {\psi^{0,\infty}}' \circ L_{-C}.
\end{equation}

\begin{figure}[htbp]
  \centering
  \includegraphics{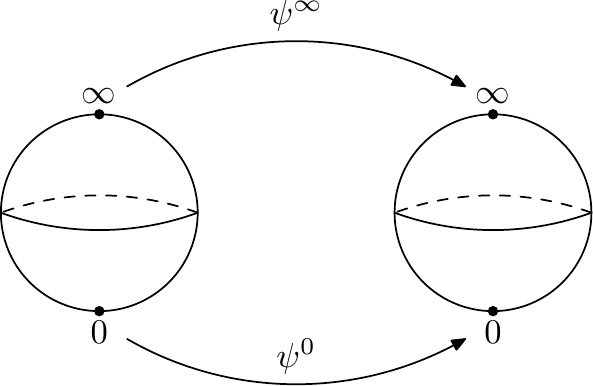}
  \caption{Space of orbits of $f\circ f$ in
  codimension~1.}
  \label{fig:esp orb g}
\end{figure}

In the antiholomorphic case, the fact that the dynamics of
$f\circ f$ is embedded in the dynamics of $f$ allows us to
define the involution $L_{-1}\circ \tau \colon w\mapsto -{1\over \wbar}$
on the space of orbits of $f\circ f$. We then obtain the space of
orbits of $f$ by quotienting the space of orbits of
$f\circ f$ by $L_{-1}\circ \tau$.
We are left with two projective spaces and a class
of germs of diffeomorphisms $[\psi]$ as in Figure~\ref{fig:esp orb f}.
The equivalence class is given by
\begin{equation}\label{eq:relation f}
  \begin{aligned}
    \psi
      \sim_\R \psi'
    \Leftrightarrow \exists R\in \R,\ \psi = L_R \circ \psi' \circ L_{-R}.
  \end{aligned}
\end{equation}
Moreover, if $\psi$ is a representative of $[\psi]$, then 
$(\psi^0, \psi^\infty)$ defined by
$$
  \left\{
  \begin{aligned}
    &\psi^\infty := \psi,\\[2\jot]
    &\psi^0 := L_{-1} \circ \tau \circ \psi^\infty \circ \tau \circ L_{-1},
  \end{aligned}
  \right.
$$
is a representative $[\psi^0,\psi^\infty]$  (the horn maps of $f\circ f$).
\begin{figure}[htbp]
  \centering
  \includegraphics{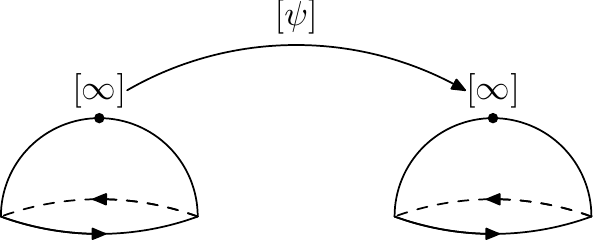}
  \caption{Space of orbits of $f$ in codimension~1.}
  \label{fig:esp orb f}
\end{figure}

\section{Generic Unfoldings of Antiholomorphic Parabolic Germs}\label{sec:pt para}

\subsection{Real parameters}
Note that it does not make sense in the context of iterations of antiholomorphic functions to speak of analytic dependence on complex parameters since this notion is not invariant under composition. What does make sense however is to speak of real analytic dependence on real parameters. And, indeed, 
a fixed point $z=0$ of $f$ is multiple as soon as $|f'(0)|=1$, which is generically a condition of real codimension $1$. Hence we should expect a real unfolding parameter to be transversal to this condition.

Since the \lq\lq natural\rq\rq\ parameters are real, we will use
mix anti-analyticity, which we introduce in the following definition.
Note that the even iterates will be mix analytic.

\begin{deff} Let $U\subseteq \R^n$ and $V\subset \Cp^m$
  be domains and $f\colon U\times V\to \Cp$ be a function.
  We say $f$ is \emph{mix analytic} (resp.~\emph{mix anti-analytic})
  if for every $(t^\ast, z^\ast) \in U\times V$, there exist an $n$-dimensional
  rectangle $R = R^n(t^\ast, r) \subseteq U$ and a polydisc
  $P = D^m(z^\ast,\rho) \subseteq V$, where $r\in\R_{>0}^n$, $\rho\in\R_{>0}^m$,
  such that $f$ has a convergent power series expansion in 
  $R\times P$ of the form
  $$
    f(t,z) = \sum_{\alpha \in \N^n} \sum_{\beta \in \N^m}
      \alpha_{\alpha,\beta} (t - t^\ast)^\alpha (z - z^\ast)^\beta,
  $$
  or, respectively,  of the form
  $$
    f(t,z) = \sum_{\alpha \in \N^n} \sum_{\beta \in \N^m}
      \alpha_{\alpha,\beta} (t - t^\ast)^\alpha (\zbar - \zbar^\ast)^\beta.
  $$
\end{deff}

\begin{lem} Let $U\subseteq \R^n$, $V \subseteq \Cp$ and
  $W\subseteq \Cp$ be domains. Let $F\colon U\times V\to\Cp$
  and $G\colon U\times W\to \Cp$ be functions. Whenever the composition
  is possible,
  \begin{enumerate}
    \item if $F$ and $G$ are mix analytic, then so is $F \circ (id\times G)$;
    \item if $F$ is mix analytic and $G$ is mix anti-analytic, then
      $F \circ (id\times G)$ and $G\circ (id\times F)$ are mix anti-analytic;
    \item if $F$ and $G$ are mix anti-analytic, then $F\circ (id\times G)$
      is mix analytic.
  \end{enumerate}
\end{lem}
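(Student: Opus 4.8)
The plan is to prove the three composition rules by a direct local computation using the defining power series expansions, treating the key point — that the parameter variable $t$ plays no role in the anti-analyticity bookkeeping — as the organizing idea. First I would fix a point $(t^\ast, z^\ast)$ in the domain of the relevant composition and let $w^\ast = G(t^\ast, z^\ast)$ (or $F(t^\ast, z^\ast)$), so that by the hypotheses we have convergent expansions of $F$ near $(t^\ast, w^\ast)$ and of $G$ near $(t^\ast, z^\ast)$ on suitable rectangle$\times$polydisc neighbourhoods. The substitution of one convergent multivariate power series into another is a classical fact (it converges on a possibly smaller neighbourhood and yields again a convergent power series in the outer variables), so the only thing to check is \emph{in which variables} the resulting series is a power series.

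The heart of the matter is the following observation, which I would state once and reuse three times: if $G$ is mix anti-analytic near $(t^\ast,z^\ast)$, then $G(t,z) - w^\ast$ is a convergent power series in $(t - t^\ast)$ and $(\zbar - \zbar^\ast)$ \emph{with no constant term}; conjugating, $\overline{G(t,z)} - \overline{w^\ast}$ is a convergent power series in $(t - t^\ast)$ (recall $t$ is real, so $\bar t = t$) and $(z - z^\ast)$ with no constant term. Now for part~(1), substituting $G$ into $F$ we plug a series in $(t-t^\ast, z-z^\ast)$ into $F$'s expansion in $(t-t^\ast, w-w^\ast)$, and since the inner series has no constant term the composite is a convergent series in $(t-t^\ast, z-z^\ast)$; mix analyticity of $F\circ(\mathrm{id}\times G)$ follows. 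For part~(3), $F$ and $G$ both anti-analytic: here I substitute the series for $\overline{G(t,z)}-\overline{w^\ast}$ (a series in $(t-t^\ast, z-z^\ast)$, no constant term) into $F$'s expansion in $(t-t^\ast, \wbar - \wbar^\ast)$, obtaining a convergent series in $(t-t^\ast, z-z^\ast)$ — so $F\circ(\mathrm{id}\times G)$ is mix analytic. Part~(2) has two sub-cases: for $F\circ(\mathrm{id}\times G)$ with $F$ analytic and $G$ anti-analytic, I substitute the series for $\overline{G(t,z)}-\overline{w^\ast}$ in $(t-t^\ast, z-z^\ast)$ into $F$'s expansion in $(t-t^\ast, \wbar-\wbar^\ast)$ and get a series in $(t-t^\ast, \zbar - \zbar^\ast)$; for $G\circ(\mathrm{id}\times F)$ with $G$ anti-analytic, I write $\overline{G\circ(\mathrm{id}\times F)} = \overline G \circ (\mathrm{id}\times F)$, note $\overline G$ (as a function of its arguments, conjugated) is mix analytic in $(t-t^\ast, \wbar-\wbar^\ast)$, and compose with the analytic $F$ — a series in $(t-t^\ast, z-z^\ast)$ with no constant term — to conclude the conjugate is mix analytic, hence $G\circ(\mathrm{id}\times F)$ is mix anti-analytic.

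The main obstacle — really the only subtlety — is purely bookkeeping: one must be careful that "mix anti-analytic" means a power series in $\zbar$ for the \emph{complex} variables only, while the real parameters $t$ always enter honestly (a real variable equals its own conjugate, so there is no ambiguity there), and that constant terms of the inner map must be absorbed into the base point of the outer expansion before composing — otherwise the substitution is not justified. I would therefore phrase the argument so that at each step the inner series is normalized to have vanishing constant term, and I would remark explicitly that conjugation is a ring isomorphism of convergent power series rings sending $(t-t^\ast)\mapsto(t-t^\ast)$ and $(z-z^\ast)\leftrightarrow(\zbar-\zbar^\ast)$, which makes the three parity rules a restatement of $(+)\cdot(+) = (-)\cdot(-) = (+)$ and $(+)\cdot(-) = (-)$ under composition. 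Everything else is the standard convergence lemma for composition of convergent power series, which I would simply cite rather than reprove.
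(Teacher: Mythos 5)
Your proof is correct, but it follows a different route from the paper's. The paper's proof is a one-liner: complexify the real parameter $t$, observe that mix (anti-)analytic functions then become holomorphic in $t$ and holomorphic (resp.\ antiholomorphic) in $z$, invoke the standard chain-rule fact that compositions of holomorphic/antiholomorphic maps obey exactly your sign rule, and restrict $t$ back to the reals. You instead work directly with the defining power-series expansions, substituting one convergent series into the other after recentering at $w^\ast=G(t^\ast,z^\ast)$, and you reduce the anti-analytic cases to the analytic one via the observation that conjugation is a ring isomorphism fixing $(t-t^\ast)$ (since $t$ is real) and exchanging $(z-z^\ast)\leftrightarrow(\overline z-\overline{z}^\ast)$; this is more self-contained but needs the convergence-of-composition lemma, which you rightly cite rather than reprove. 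One bookkeeping slip to fix: in case (2) for $F\circ(\mathrm{id}\times G)$ with $F$ mix \emph{analytic}, $F$'s expansion is in $(t-t^\ast, w-w^\ast)$, not $(t-t^\ast,\overline w-\overline{w}^\ast)$; the correct substitution is $w-w^\ast=G(t,z)-w^\ast$, which is a series in $(t-t^\ast,\overline z-\overline{z}^\ast)$ with no constant term, yielding directly a series in $(t-t^\ast,\overline z-\overline{z}^\ast)$ (as written, plugging a series in $(z-z^\ast)$ into an expansion in $\overline w-\overline{w}^\ast$ would produce a series in $(z-z^\ast)$, contradicting your stated conclusion). With that one sentence repaired, all three cases go through and match the parity rules you state at the end.
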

\begin{proof} The proof is a simple computation of derivatives. Indeed,
  we can complexify $t\in U$, show that the composition is holomorphic
  for 1 and 3 or holomorphic in $t$ and antiholomorphic in $z$ for 2, 
and  then restrict $t$ to the reals.
\end{proof}

\begin{Wtheo}[Real analytic version]\label{theo:weierstrass}
  Let $g\colon (\R^N,0)\times (\Cp,0) \to (\Cp,0)$; $(t,z)\mapsto g(t,z)$
  be a germ of mix analytic function. If
  $$
    {\del^k g\over \del z^k}(0,0) = 0
    \qquad\hbox{ and }\qquad
    {\del^n g\over \del z^n}(0,0)\not=0 
    \eqno (k=1,\ldots,n-1),
  $$
  then there exists a polynomial $P_t(z) = z^n + a_{n-1}(t)z^{n-1} + \cdots + a_0(t)$,
  where $a_j$ is real analytic, $a_j(0)=0$, and a germ of mix analytic function $h\colon (\R^N,0)\times (\Cp,0)\to \Cp$  
   with $h(0,0)\not=0$ such that
  $$
    g(t,z) = P_t(z) h(t,z).
  $$
  Moreover, if $g(t,\zbar) = \ol{g(t,z)}$, then 
  $a_j$ is real valued.
\end{Wtheo}
\begin{proof}
  The proof is identical to the proof of \cite{Range}. The fact that
  the $a_j$'s are real analytic follows from the fact that
  $g$ is mix analytic and the Cauchy's formula in the $z$
  variable.

  For the second part, we have $g(t,z) = P_t(z) h(t,z)$ as in the
  statement of the theorem. If $\varphi(t)$ is a zero of $P_t$,
  then $\ol{\varphi(t)}$ is also a zero with the same multiplicity,
  since ${\del^j \over \del \zbar^j} g(t,\zbar) = \ol{{\del^j \over \del z^j} g(t,z)}$.
  If $\alpha_1(t),\ol{\alpha_1(t)},\ldots,\alpha_\ell(t),\ol{\alpha_\ell(t)}$
  are the complex roots of $P_t$,
  and $\beta_j(t)$, $j=1,\ldots,\ell'$ are its real roots, then
  we have
  \begin{align*}
    P_t(z) &= (z - \alpha_1)^{q_1}(z - \ol{\alpha_1})^{q_1}
      \cdots (z - \alpha_\ell)^{q_\ell}(z - \ol{\alpha_\ell})^{q_\ell}
      (z-\beta_1)^{m_1}\cdots (z- \beta_{\ell'})^{m_{\ell'}}.
  \end{align*}
  For $x$ real, the polynomial $(x - \alpha_j)(x - \ol{\alpha_j})$ 
  is real valued, so $x\mapsto P_t(x)$ is real valued. It follows
  that its coefficients $a_j(t)$ are real for every $t$.
\end{proof}

\subsection{Generic Unfoldings}

We are now ready to define unfoldings of an antiholomorphic 
parabolic fixed point and the notion of equivalence of such 
unfoldings. {As discussed above it is natural to work with one real parameter. }

\begin{deff}
  \begin{enumerate}
    \item Let $f_0\colon ({\Cp},0)\to (\Cp,0)$ be a germ of antiholomorphic
      diffeomorphism with a parabolic fixed point (antiholomorphic
      parabolic germ for short) of codimension~1. An \emph{unfolding}
      of $f_0$ is a germ of mix anti-analytic diffeomorphism
      $f\colon ({\R},0)\times (\Cp,0) \to (\Cp,0)$, {$(\ep, z)\mapsto f(\eps,z)=f_\ep(z)$. }
    \item { We can of course suppose that $f_0$ is in the form \eqref{eq:prenormale 1}. 
    Then the unfolding has the form 
    \begin{equation}\label{unfolding}f_\ep(z)= f_0(z) + \sum_{j\geq0} a_j(\eps)\zbar^j,\end{equation} with $a_j(0)=0$.} We say that the unfolding is \emph{generic} if
    \begin{equation}
        \left.{\del \Re(a_0)\over \del \ep}\right|_{(\ep,z)=(0,0)} \not= 0.
    \end{equation}
  \end{enumerate}
\end{deff}

The equivalence of two families will be defined in terms of
mix analyticity.

\begin{deff}\label{def:eq forte}
  Let $f_{1,\eta}$ and $f_{2,\ep}$ be two generic unfoldings
  of antiholomorphic parabolic germs of codimension $1$. We say they are
  \emph{equivalent} if there exists an open interval
  $I\ni 0$, a disc $D(0,r)$, $r>0$, and a mix analytic
  diffeomorphism $H\colon I\times D(0,r) \to \R\times \Cp$
  such that
  \begin{enumerate}
    \item $H(0,0) = (0,0)$;
    \item $H(\eta,z) = (\beta(\eta), h_\eta(z))$ with 
      $\beta$ real analytic and $h_\eta$ mix analytic;
    \item $f_{2,\beta(\eta)} = h_\eta \circ f_{1,\eta} \circ h_{\eta}\inv$.
  \end{enumerate}
\end{deff}

The main goal of the
paper is to describe the equivalence classes.

\subsection{Canonical Parameter}
In the holomorphic case, it is proved in~\cite{germeDeploie} that a
generic unfolding of a parabolic fixed point has exactly
one canonical complex parameter.  In the same way, a generic unfolding in 
the antiholomorphic case will also have exactly one real
canonical parameter.

\begin{lem}\label{lem:pts fixes}
  Let $f_0$ be a antiholomorphic parabolic germ of codimension~1
  and let $f_\ep$ be a generic unfolding of $f_0$. Then there exists
 a change of coordinate and parameter $(z,\ep)\mapsto (Z,\eta)$ such that the fixed points are located at $Z^2=\eta$ whenever they exist.
\end{lem}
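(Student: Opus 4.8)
The plan is to locate the fixed points of $f_\ep$ by studying $f_\ep(z)-z=0$ and then normalise the set of solutions to $\{Z^2=\eta\}$ by a change of coordinate and parameter. First I would observe that $f_\ep(z)-z$ is mix anti-analytic; hence $g_\ep=f_\ep\circ f_\ep$ is mix analytic, and a fixed point of $f_\ep$ is in particular a fixed point of $g_\ep$. Write $G(\ep,z)=g_\ep(z)-z$; since $f_0$ has a parabolic point of codimension $1$ (multiplicity $2$), we have $\frac{\del G}{\del z}(0,0)=0$ and $\frac{\del^2 G}{\del z^2}(0,0)\neq 0$, so the Weierstrass Preparation Theorem~\ref{theo:weierstrass} applies: $G(\ep,z)=(z^2+a_1(\ep)z+a_0(\ep))\,h(\ep,z)$ with $h(0,0)\neq 0$, $a_j$ real analytic, $a_j(0)=0$, and — because $g_\ep$ commutes with conjugation in the sense $g_\ep(\zbar)=\ol{g_\ep(z)}$ once $f_\ep$ is an unfolding of a germ in the real prenormal form — the $a_j(\ep)$ are real valued. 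So the fixed points of $g_\ep$ are the roots of a real quadratic $P_\ep(z)=z^2+a_1(\ep)z+a_0(\ep)$.

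Next I would complete the square: the affine change of coordinate $z\mapsto Z=z+\tfrac12 a_1(\ep)$ (real-coefficient, hence preserving mix anti-analyticity of the conjugated family) brings $P_\ep$ to $Z^2-\eta$ with $\eta=\eta(\ep):=\tfrac14 a_1(\ep)^2-a_0(\ep)$, a real analytic function vanishing at $\ep=0$. The genericity hypothesis $\del_\ep\Re(a_0)|_0\neq 0$ should force $\eta'(0)\neq 0$: indeed $a_1(\ep)^2$ is $o(\ep)$ (as $a_1(0)=0$), while $a_0$ is real valued with nonzero derivative at $0$ coming from $\del_\ep\Re(a_0)|_0\neq 0$, so $\eta'(0)=-a_0'(0)\neq 0$. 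Thus $\ep\mapsto\eta$ is a valid real analytic reparametrisation near $0$, and after this change the fixed points of $g_\ep$ are exactly $Z=\pm\sqrt{\eta}$, located at $Z^2=\eta$.

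The remaining point is that the fixed points of $f_\ep$ — not just of $g_\ep$ — are the ones at $Z^2=\eta$. Here I would argue that a fixed point of $g_\ep$ is either a fixed point of $f_\ep$ or one point of a genuine $2$-periodic orbit of $f_\ep$, and a count by multiplicity rules out the latter near $\ep=0$: $f_\ep$ is an unfolding of a point of multiplicity $2$, so for small $\ep$ it has at most two fixed points counted with multiplicity, and the two roots of $Z^2=\eta$ are all of them; alternatively, one checks directly from the prenormal form that the two solutions of $G(\ep,z)=0$ satisfy $f_\ep(z)=z$ (e.g.\ using that $f_\ep$ is close to $\sigma$ composed with a near-identity map, for which any short $2$-cycle would have to collapse). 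I expect this last identification — showing the Weierstrass roots of $g_\ep$ are true fixed points of $f_\ep$ and no spurious $2$-cycle appears — to be the main obstacle; the Weierstrass preparation and the completion of the square are routine. Once it is settled, the composite change $(z,\ep)\mapsto(Z,\eta)=(z+\tfrac12 a_1(\ep),\,\tfrac14 a_1(\ep)^2-a_0(\ep))$ is the desired one.
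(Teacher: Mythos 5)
Your route through $g_\ep=f_\ep\circ f_\ep$ breaks at the step you treat as routine: the realness of the Weierstrass data. The ``Moreover'' clause of Theorem~\ref{theo:weierstrass} requires $G(\ep,\zbar)=\ol{G(\ep,z)}$, and this does not hold here: the prenormal form \eqref{eq:prenormale 1} only makes the first few coefficients of $f_0$ real, while the remaining coefficients of $f_0$ and all the unfolding coefficients $a_j(\ep)$ in \eqref{unfolding} are arbitrary complex numbers (and vanish only at $\ep=0$), so in general neither $f_\ep$ nor $g_\ep$ commutes with $\sigma$, and the two fixed points of $g_\ep$ are not symmetric with respect to the real axis. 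Consequently your Weierstrass coefficients $a_0(\ep),a_1(\ep)$ are complex-valued, the translation $z\mapsto z+\tfrac12 a_1(\ep)$ is a complex translation and, worse, $\eta=\tfrac14 a_1^2-a_0$ is complex, so it cannot serve as the new \emph{real} parameter of a real one-parameter family; making it real requires a further, nontrivial change of coordinate that your argument does not provide. This is exactly the point the paper's proof is organized around: it works with $f_\ep$ itself, splits $f_\ep(z)-z$ into real and imaginary parts, uses the Implicit Function Theorem on the equation $\Im\big(f_\ep(z)-z\big)=0$ (whose $y$-derivative at the origin is $-2$) to straighten the real-analytic curve carrying all fixed points onto the real axis via $z=z_1+im(z_1,\ep)$, and only then applies the real-analytic Weierstrass preparation to the restricted real equation; it is this preliminary straightening that yields real $\alpha_0,\alpha_1$ and hence a real $\eta$ with $\eta'(0)\neq0$.

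Your handling of the $f$-versus-$g$ issue is also wrong in substance: no multiplicity count rules out $2$-periodic orbits near $\ep=0$. For a generic unfolding, an open set of parameter values (those with $\eta<0$ in the prepared coordinates) gives precisely that configuration: the two fixed points of $g_\ep$ form a genuine period-$2$ orbit of $f_\ep$ and $f_\ep$ has no fixed point at all --- this is why the lemma says ``whenever they exist''. For the literal statement this worry is in fact harmless, since $\mathrm{Fix}(f_\ep)\subseteq\mathrm{Fix}(g_\ep)$ already ensures that any fixed points of $f_\ep$ lie at $Z^2=\eta$ once those of $g_\ep$ do; but the dichotomy you try to exclude is an essential feature of the dynamics, not a spurious case. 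The realness gap of the first paragraph, however, is the one that is fatal to the proposal as written.
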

\begin{proof}
  Suppose that $f_\ep$ has the form~\eqref{unfolding}. 
  Let $z=x+iy$ and set $F(x,y,\ep) = f_\ep(z) - z$ and $F_1 = \Re F$,
  $F_2 = \Im F$. We find
  \begin{align}\begin{split}
    F_1(x,y,\ep) &= \Re a_0(\ep) + O(\ep)O(|x,y|) 
      + \big(1 + O(\ep)\big){x^2 - y^2 \over 2}
      + O(|x,y|^3),\\[3\jot]
    F_2(x,y,\ep) &= \Im a_0(\ep) - 2y +O(\ep)O(|x,y|) 
      - \big(1 + O(\ep)\big) xy 
      + O(|x,y|^3).
 \end{split} \label{eq:F}\end{align}
 Since 
  ${\del F_2 \over \del y}(0) = -2$, by the Implicit Function Theorem,
  there exists a real analytic function $m$ such that
  $F_2 = 0$ if and only if $y = m(x,\ep) = O(|x,\ep|^2)$.
  
  We make the change of variable $z= z_1+im(z_1,\ep)$, which sends the real axis in $z_1$-space to $y=m(x,\ep)$ in $z$-space. Let $f_{1,\ep}$ be the expression of $f_\eps$ in the new variable $z_1$. 
Let us now consider the corresponding equations~\eqref{eq:F} in the new variable $z_1$. For $y_1=0$, the first equation has the form 
$\tilde F_1(x_1,\ep) = x_1^2 (1+O(\ep)+O(x_1))+ O(\ep)x_1 + O(\ep)$, and we have
  ${\del \tilde F_1\over \del x_1}(0) = 0$ and 
  ${\del^2 \tilde F_1 \over\del x_1^2}(0) = 1.$
  By the Weierstrass Preparation Theorem~\ref{theo:weierstrass},
  there exists a polynomial $P_\ep(x_1) = x_1^2 + \alpha_1(\ep) x_1 + \alpha_0(\ep)$, with $\alpha_0(\ep)$ and $\alpha_1(\ep)$ real and $\alpha_0'(0)\neq0$,
  such that  $\tilde F_1(x_1,\eps)=0 $, if and only if $P_\ep(x_1)=0$.
  
  A translation $Z= z_1 +\frac12 \alpha_1(\ep)$ brings the fixed points to $Z^2=\eta$, where $\eta= \frac14\alpha_1^2(\eps)-\alpha_0(\eps)$. 
\end{proof}

\subsubsection{Normal Form}
We consider the vector field
\begin{equation}
  \zpoint = v_\ep(z) = {z^2 - \ep \over 1 + b(\ep) z}\label{vector_field}
\end{equation}
and the time-$t$ map $v^t_\ep$, where $b\colon(\R,0)\to\R$ is
a germ of real analytic function. 
Then $\sigma\circ \vt_\ep$ is a generic unfolding of a antiholomorphic parabolic germ of codimension 1.  It will be the natural \lq\lq model\rq\rq\ (normal form) to which we will compare any generic unfolding $f_\eta$: unique new parameter $\ep$ and function $b(\ep)$ will be found  so that the multipliers of $f_\ep^{\circ 2}$ at its fixed points be the same as those of  $(\sigma\circ \vt_\ep)^{\circ 2}$ at its fixed points.

Since $\sigma \circ v_\ep = v_{\ep} \circ \sigma$, it follows that
$\sigma \circ \vt_\ep = \vt_\ep \circ \sigma$. In particular,
we have $(\sigma \circ \vt_\ep)^{\circ 2} = \vun_\ep$, which
correspond to the normal form in the holomorphic case for $\ep$ real.

The multipliers of $v_\ep$ are given by
$$
  \mu_\pm = \pm {2\sqrt{\ep} \over 1 \pm b(\ep)\sqrt{\ep}}
$$
and the multipliers of $\vun_\ep$ are
$\lambda_\pm = \exp(\mu_\pm)$. It follows that
\begin{equation}
  \ep := \left({1 \over \log(\lambda_+)}
    - {1 \over \log(\lambda_-)}\right)^{-2}\kern-4mm,
  \qquad\qquad
  b(\ep) := {1 \over \log(\lambda_+)}
    + {1 \over \log(\lambda_-)}.
\label{b-ep}\end{equation}
Since the multipliers are preserved by analytic changes of
coordinate, we see that  $b$ and $\ep$ are invariant. In particular, that gives us the hint on how to find the canonical parameter $\ep$ of an arbitrary antiholomorphic parabolic germ.

\subsubsection{Canonical Parameter and Prepared Form}
The formula \eqref{b-ep} allows to define the \emph{canonical parameter} and the \emph{formal invariant}  of the unfolding.
These are the formal part of the modulus of classification
that we will define in Section~\ref{sec:weak class}. The analytic part of the modulus will consist in a measure of the obstruction to a conjugacy of an unfolding to its model.

\begin{theo}[Canonical Parameter]\label{theo:ep canonique}
  Let $\ftilde_0\colon (\Cp,0)\to (\Cp,0)$ be an antiholomorphic
  parabolic germ of codimension~1 with formal invariant $b_0\in\R$.
  Let $\ftilde_\eta$ be a generic unfolding depending 
  on the real parameter  $\eta$ with fixed points at $z^2=\eta$.
  Let $\gtilde_\eta = \ftilde_\eta \circ \ftilde_\eta$. We set
  \begin{gather}\label{eq:def ep}
    \ep := \left({1 \over \log(\tilde g_\eta'(\sqrt\eta))}
      - {1 \over \log(\tilde g_\eta'(-\sqrt\eta))}\right)^{-2}\kern-4mm,
      \\[2\jot]\label{eq:def b}
    b := {1 \over \log(\tilde g_\eta'(\sqrt\eta))}
      + {1 \over \log(\tilde g_\eta'(-\sqrt\eta))},
  \end{gather}
  where the logarithm is the principal branch. Then we have that
  $\ep$ and $b$
  \begin{enumerate}
    \item are real-valued;
    \item are invariant under changes of coordinate;
    \item can be continued into real analytic functions,
      in particular $\ep(0) = 0$ and $b(0) = b_0$.
  \end{enumerate}
  The parameter $\eps$ is called the \emph{canonical parameter} 
  and the function $b(\eps)$ the \emph{formal invariant} of the family.
\end{theo}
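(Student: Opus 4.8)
The plan is to verify the three properties in turn, leaning on Lemma~\ref{lem:pts fixes} (which puts the fixed points at $z^2=\eta$) and on the structure of $\gtilde_\eta=\ftilde_\eta\circ\ftilde_\eta$ as a mix analytic (in fact, for even iterates, genuinely analytic in the real parameter) unfolding of a holomorphic parabolic germ.

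\textbf{Step 1: reality of $\ep$ and $b$.} First I would observe that since $\ftilde_\eta$ is antiholomorphic and the coordinate change of Lemma~\ref{lem:pts fixes} is real (it places the two fixed points symmetrically at $\pm\sqrt\eta$), for $\eta>0$ the two fixed points $\pm\sqrt\eta$ are real and are interchanged by complex conjugation only in the sense that each is individually real. Because $\ftilde_\eta(\bar z)=\overline{\ftilde_\eta^\sigma(z)}$ in the appropriate sense, and $\gtilde_\eta$ has real coefficients as a power series (it is mix analytic and satisfies $\gtilde_\eta(\bar z)=\overline{\gtilde_\eta(z)}$ — here I would invoke the ``moreover'' clause mechanism already used in Theorem~\ref{theo:weierstrass}), the multipliers $\gtilde_\eta'(\pm\sqrt\eta)$ are real and positive for $\eta>0$ small: they are derivatives at real fixed points of a real power series, and positivity follows because $\ftilde_\eta$ near a parabolic point unfolds to one attracting and one repelling fixed point of the holomorphic second iterate, whose multipliers are close to $1$. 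Hence $\log$ (principal branch) of each multiplier is real, and the combinations \eqref{eq:def ep}, \eqref{eq:def b} are real. For $\eta<0$ the fixed points are a complex-conjugate pair $\pm i\sqrt{|\eta|}$, and then $\gtilde_\eta'(\sqrt\eta)$ and $\gtilde_\eta'(-\sqrt\eta)$ are complex conjugates of each other (again by the reality of the coefficients of $\gtilde_\eta$); so the two terms $1/\log(\cdot)$ are complex conjugates, making the difference in \eqref{eq:def ep} purely imaginary, its square real, and actually its reciprocal-square real, while the sum in \eqref{eq:def b} is real. So in all cases $\ep,b\in\R$.

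\textbf{Step 2: invariance.} This is the easy part: multipliers of periodic orbits are preserved by analytic conjugacy. A conjugacy $h$ between two unfoldings carries fixed points of $\gtilde_\eta$ to fixed points of the conjugated family and transports the derivative at a fixed point unchanged (chain rule at a fixed point), and it respects which fixed point is which since the labelling by $\pm\sqrt\eta$ is intrinsic up to the possible swap $\sqrt\eta\leftrightarrow-\sqrt\eta$; both \eqref{eq:def ep} and \eqref{eq:def b} are symmetric under that swap, hence well-defined and invariant. I would spell this out in one or two sentences.

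\textbf{Step 3: real analytic continuation through $\eta=0$, with $\ep(0)=0$, $b(0)=b_0$.} This is the main obstacle, because the defining formulas are singular at $\eta=0$ (the multipliers both tend to $1$, the logarithms to $0$). The approach: write $\lambda_\pm(\eta):=\gtilde_\eta'(\pm\sqrt\eta)$ and set $\mu_\pm(\eta):=\log\lambda_\pm(\eta)$. Since $\gtilde_\eta$ is a generic analytic (in $\eta$) unfolding of a holomorphic codimension-$1$ parabolic point with fixed points at $z^2=\eta$, standard facts about such unfoldings (as developed in \cite{germeDeploie}) give that $\mu_\pm(\eta)=\pm 2\sqrt\eta\,(1+O(\sqrt\eta))$ — more precisely $\mu_\pm(\eta)$ is an odd-type analytic function of $\sqrt\eta$ vanishing to first order, so $1/\mu_+(\eta)+1/\mu_-(\eta)$ and $(1/\mu_+(\eta)-1/\mu_-(\eta))^{-1}$ are even analytic functions of $\sqrt\eta$, i.e.\ analytic functions of $\eta$ near $0$. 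Concretely, I would show $\frac{1}{\mu_+}-\frac{1}{\mu_-}=\frac{1}{\sqrt\eta}\,u(\eta)$ with $u$ analytic, $u(0)\neq0$, so its inverse square is $\eta/u(\eta)^2$, analytic and vanishing at $\eta=0$; and $\frac1{\mu_+}+\frac1{\mu_-}$ is analytic with value $b_0$ at $\eta=0$ (by comparison with the model $\vun_\eta$ where \eqref{b-ep} was computed explicitly and the $O$-terms are controlled by genericity). Thus $\ep$ extends real analytically with $\ep(0)=0$, and genuine-ness of the parameter (that $\ep'(0)\neq0$, or rather that $\eta\mapsto\ep$ is a local diffeomorphism) follows from $u(0)\neq0$; and $b$ extends with $b(0)=b_0$. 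The delicate point I would be most careful about is justifying the parity/oddness of $\mu_\pm$ in $\sqrt\eta$ and the non-vanishing $u(0)\neq0$ — this is where one must actually use that the family is a \emph{generic} unfolding and invoke the precise asymptotics of multipliers at the two unfolded fixed points from the holomorphic theory, rather than just the formal normal form.
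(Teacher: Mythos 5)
Your Steps 2 and 3 are essentially the paper's argument (invariance of the multipliers of $\gtilde_\eta$ under conjugacy; complexification of $\eta$, symmetry of the two combinations under $\sqrt\eta\mapsto-\sqrt\eta$, and boundedness from the multiplier asymptotics, with the detailed expansion deferred to \cite{germeDeploie}, exactly as the paper does). The genuine gap is in Step 1: your reality argument rests on the claim that $\gtilde_\eta$ has real power-series coefficients, i.e.\ $\gtilde_\eta(\zbar)=\ol{\gtilde_\eta(z)}$. Nothing gives you this: neither the preparation of Lemma~\ref{lem:pts fixes} nor the form \eqref{f_tilde} forces the coefficients $C_j(\eta)$, $R(\eta,\cdot)$ (hence $D_j(\eta)$, $Q(\eta,\cdot)$) to be real, and the ``moreover'' clause of Theorem~\ref{theo:weierstrass} cannot be invoked because its hypothesis is precisely the identity you are trying to establish. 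In fact the claim is false generically: writing $f_\eta(z)=\phi_\eta(\zbar)$ with $\phi_\eta$ holomorphic and $\phi_\eta^\ast(w):=\ol{\phi_\eta(\ol w)}$, one has $\gtilde_\eta=\phi_\eta\circ\phi_\eta^\ast$ while $\ol{\gtilde_\eta(\zbar)}=\phi_\eta^\ast\circ\phi_\eta$, so real coefficients would force $\phi_\eta$ and $\phi_\eta^\ast$ to commute; equivalently $\gtilde_\eta$ (and $f_\eta$) would preserve the real axis, which the paper's last subsection and \cite{GR} identify as an infinite-codimension phenomenon.

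The conclusions you want are nevertheless true, but for the reason the paper uses: the chain rule applied to $g_\eta=f_\eta\circ f_\eta$ with $f_\eta$ antiholomorphic, namely $g_\eta'(z_0)={\del f_\eta\over\del\zbar}\big(f_\eta(z_0)\big)\cdot\ol{{\del f_\eta\over\del\zbar}(z_0)}$. For $\eta\ge 0$ the points $\pm\sqrt\eta$ are fixed by $f_\eta$ itself (in \eqref{f_tilde} one has $\zbar=z$ and $\zbar^2-\eta=0$ there), so $\lambda_\pm=|\tau_\pm|^2>0$ and the logarithms are real; for $\eta<0$ the fixed points of $g_\eta$ form a $2$-periodic orbit of $f_\eta$, so $\lambda_\pm=\tau_\mp\ol{\tau_\pm}$ and $\lambda_+=\ol{\lambda_-}$, after which your computation (conjugate reciprocal logarithms, purely imaginary difference, real sum) gives reality of \eqref{eq:def ep} and \eqref{eq:def b}. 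So Step 1 must be replaced by this antiholomorphy/chain-rule computation rather than an appeal to reality of coefficients. A minor additional point in Step 3: each $\mu_\pm$ is not an odd function of $\sqrt\eta$; the correct statement is that the deck transformation $\sqrt\eta\mapsto-\sqrt\eta$ permutes the pair, $\mu_\mp(-\sqrt\eta)=\mu_\pm(\sqrt\eta)$, which is what makes the difference anti-invariant and yields your formula $1/\mu_+-1/\mu_-=u(\eta)/\sqrt\eta$ with $u$ analytic and $u(0)\neq0$.
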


We postpone the proof to introduce the \emph{prepared form}. When comparing 
unfoldings, it will be useful to compare them when they are
in their prepared form, since the prepared form depends on the
canonical parameter. A prepared form for the holomorphic
case was presented in~\cite{germeDeploie}.

\begin{theo}[Prepared Form]\label{theo:forme prep}
  Under the the hypotheses of Theorem~\ref{theo:ep canonique},
  there exists a mix analytic diffeomorphism 
  $H\colon (\eta, z) \mapsto \big(\ep, m_\eta(z)\big) = (\ep,z_1)$
   that maps the family ${\{\ftilde_\eta\}}_\eta$ to
  a family in the \emph{prepared form}
\begin{equation}\label{eq:forme preparee}
    f_\ep(z_1) = \overline{z}_1 + (\overline{z}_1^2 - \ep)
      [B_0(\ep) + B_1(\ep)\overline{z}_1 
      + (\overline{z}_1^2 - \ep)Q(\ep,\overline{z}_1)],
  \end{equation}
  that satisfies 
  \begin{enumerate}
    \item $B_0(0) = {1\over 2}$ and $B_0$ and $B_1$ are real-valued;
    \item $\tau_\pm:= {\del f_\ep\over\del \overline{z}_1}(\pm\sqrt{\ep})\begin{cases}
        \in\R, & \hbox{if $\ep > 0$;}\\
      = \ol{\tau_{\mp}},& \hbox{if $\ep < 0$;}\\
      = 1,& \hbox{if $\ep = 0$;}\end{cases}$
    \item $\lambda_\pm:= {\del g_\ep\over\del z_1}(\pm\sqrt{\ep}) = \ol{\tau_\pm^2}$, where $g_\ep := f_\ep\circ f_\ep$.  \end{enumerate}
\end{theo}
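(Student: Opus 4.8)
The plan is to realise $H$ as a composition of elementary changes: the one furnished by Lemma~\ref{lem:pts fixes} bringing the fixed points to $z^2=\eta$; a reparametrisation to the canonical $\ep$ together with a rescaling of $z$ that puts the fixed points exactly on $z^2=\ep$; a Weierstrass factorisation producing the shape \eqref{eq:forme preparee}; and finally a parameter-dependent coordinate change making $B_0$ and $B_1$ real. Conditions~2 and~3 then drop out by a direct computation from \eqref{eq:forme preparee} and the chain rule for $g_\ep=f_\ep\circ f_\ep$.

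\emph{Reduction to the canonical parameter with fixed points on $z^2=\ep$.} By Lemma~\ref{lem:pts fixes} we may assume $\ftilde_\eta$ has its fixed points at $z^2=\eta$. Writing $\ftilde_\eta(z)=\Phi_\eta(\overline{z})$ with $\Phi$ mix analytic, the holomorphic germ $G_\eta(w):=\Phi_\eta(w)-w$ has a double zero at $w=0$ when $\eta=0$ (by \eqref{eq:prenormale 1}, $G_0(w)=\tfrac12 w^2+\cdots$), so the Weierstrass Preparation Theorem~\ref{theo:weierstrass} gives $G_\eta(w)=P_\eta(w)\,U_\eta(w)$ with $P_\eta(w)=w^2+a_1(\eta)w+a_0(\eta)$, the $a_j$ real analytic with $a_j(0)=0$, and $U$ mix analytic with $U_0(0)=\tfrac12$. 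For $\eta>0$ the fixed points are the real numbers $\pm\sqrt{\eta}$, so the zeros of $G_\eta$ near $0$ are $\pm\sqrt{\eta}$ and $P_\eta(w)=w^2-\eta$ there; since $a_0,a_1$ are real analytic this holds for all $\eta$, whence $\ftilde_\eta(z)=\overline{z}+(\overline{z}^2-\eta)U_\eta(\overline{z})$. The antiholomorphic chain rule gives $\gtilde_\eta'(p)=\bigl|\partial_{\overline{z}}\ftilde_\eta(p)\bigr|^2$ at a fixed point $p$, and $\partial_{\overline{z}}\ftilde_\eta(\pm\sqrt{\eta})=1\pm\sqrt{\eta}+O(\eta)$, so $\gtilde_\eta'(\pm\sqrt{\eta})=1\pm2\sqrt{\eta}+O(\eta)$; hence the canonical parameter of Theorem~\ref{theo:ep canonique} satisfies $\ep(\eta)=\eta+O(\eta^{3/2})$, and being real analytic (Theorem~\ref{theo:ep canonique}(3)) it has $\ep'(0)=1$. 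Reparametrising to $\ep$ and rescaling $z=\rho(\ep)z_1$ with $\rho(\ep)=\sqrt{\eta(\ep)/\ep}$ (real analytic, $\rho(0)=1$) moves the fixed points to $z_1^2=\ep$; regrouping the resulting $U$ as $B_0(\ep)+B_1(\ep)\overline{z}_1+(\overline{z}_1^2-\ep)Q(\ep,\overline{z}_1)$ puts the family in the shape \eqref{eq:forme preparee} with $B_0(0)=\tfrac12$ (though $B_0,B_1$ are a priori complex), with $\ep$ now the canonical parameter.

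\emph{Making $B_0,B_1$ real --- the main step.} Conjugate by the mix analytic germ $h_\ep(z_1)=z_1+(z_1^2-\ep)\bigl(k_0(\ep)+k_1(\ep)z_1\bigr)$ with $k_0(0)=k_1(0)=0$. Since $h_\ep$ fixes $\pm\sqrt{\ep}$, the conjugated family keeps its fixed points at $z_1^2=\ep$ and the same canonical parameter (Theorem~\ref{theo:ep canonique}(2)), and re-running the factorisation above preserves the shape \eqref{eq:forme preparee} and the value $B_0(0)=\tfrac12$. A leading-order conjugation computation shows that the new coefficients are $\widetilde B_0=B_0+2i\,\Im k_0+O(|\ep|\,|k|)+O(|k|^2)$ and $\widetilde B_1=B_1+2i\,\Im k_0+2i\,\Im k_1+O(|\ep|\,|k|)+O(|k|^2)$. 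Taking $\Re k_0\equiv\Re k_1\equiv0$, the map $(\Im k_0,\Im k_1)\mapsto(\Im\widetilde B_0(\ep),\Im\widetilde B_1(\ep))$ has Jacobian $\left(\begin{smallmatrix}2&0\\2&2\end{smallmatrix}\right)$ at $(\ep,k)=(0,0)$ (using $\Im B_0(0)=\Im B_1(0)=0$ from \eqref{eq:prenormale 1}), so the implicit function theorem produces real analytic functions $\Im k_0(\ep),\Im k_1(\ep)$, vanishing at $0$, for which $\Im\widetilde B_0\equiv\Im\widetilde B_1\equiv0$. This gives \eqref{eq:forme preparee} with $B_0,B_1$ real and $B_0(0)=\tfrac12$, i.e.\ condition~1. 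This step --- the computation of the action of conjugation on the low-order coefficients and the verification that the relevant Jacobian is invertible --- is the principal obstacle; the identification $P_\eta(w)=w^2-\eta$ above (which uses the $\eta>0$ case together with real analyticity of the Weierstrass coefficients) and the claim $\ep'(0)\ne0$ are the only other points needing care.

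\emph{Conditions 2 and 3, and conclusion.} With $B_0,B_1$ real, compute directly from \eqref{eq:forme preparee}: $\tau_\pm=\partial_{\overline{z}_1}f_\ep(\pm\sqrt{\ep})=1+2B_0(\ep)\,\overline{(\pm\sqrt{\ep})}+2B_1(\ep)\ep$, which is real when $\ep>0$, equals $\overline{\tau_\mp}$ when $\ep<0$ (then $\overline{\pm\sqrt{\ep}}=\mp\sqrt{\ep}$), and equals $1$ when $\ep=0$ --- condition~2. Since the bracket in \eqref{eq:forme preparee} is multiplied by $\overline{z}_1^2-\ep$, we have $\Phi_\ep(w)=w$ whenever $w^2=\ep$, so the chain rule for $g_\ep$ gives $\lambda_\pm=g_\ep'(\pm\sqrt{\ep})=\Phi_\ep'(\pm\sqrt{\ep})\,\overline{\Phi_\ep'(\overline{\pm\sqrt{\ep}})}$; substituting the values of $\Phi_\ep'$ and using that $B_0,B_1$ are real yields $\lambda_\pm=\overline{\tau_\pm^2}$ --- condition~3. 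Finally $H$ is the composition of the change of Lemma~\ref{lem:pts fixes}, the reparametrisation-and-rescaling, and the conjugation by $h_\ep$, each of which is a mix analytic diffeomorphism near $0$ of the required form $(\eta,z)\mapsto(\ep,m_\eta(z))$; hence so is $H$.
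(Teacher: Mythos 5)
Your proof is correct, and its skeleton coincides with the paper's: reduce via Lemma~\ref{lem:pts fixes} and a Weierstrass division to the shape \eqref{eq:forme preparee} with a priori complex coefficients, conjugate by a map of the form $z\mapsto z+(z^2-\ep)(k_0+k_1z)$ which fixes $\pm\sqrt\ep$, and rescale to the canonical parameter. The difference lies in how the coefficients of that middle conjugation are pinned down, and in the logical order of the three conclusions. The paper chooses them explicitly, $A_0=\bigl(\ol{\sqrt{\tilde\tau_+}}-\ol{\sqrt{\tilde\tau_-}}\bigr)/(4\sqrt\eta)$ and $A_1=\bigl(\ol{\sqrt{\tilde\tau_+}}+\ol{\sqrt{\tilde\tau_-}}-2\bigr)/(4\eta)$, so that $w_\eta'(\pm\sqrt\eta)=\ol{\sqrt{\tilde\tau_\pm}}$ and condition~2 holds by construction, with condition~1 deduced afterwards; the price is a removable-singularity argument (invariance under $\sqrt\eta\mapsto-\sqrt\eta$ plus boundedness) to see that $A_0,A_1$ are analytic at $\eta=0$. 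You instead take $k_0,k_1$ purely imaginary and solve $\Im B_0=\Im B_1=0$ by the implicit function theorem, getting condition~1 first and then 2 and~3 by direct computation. Your leading-order expansion is right: at $\ep=0$ one finds $\tilde B_0=B_0+2i\Im k_0$ and $\tilde B_1=B_1+4iB_0\Im k_0+2i\Im k_1+O(|k|^2)$, which with $B_0(0)=\tfrac12$ gives exactly the triangular Jacobian with diagonal entries $2,2$ that you claim, and the base point $(\Im B_0(0),\Im B_1(0))=(0,0)$ follows from \eqref{eq:prenormale 1} and $b(0)\in\R$. Since, for real analytic $B_0,B_1$, conditions~1 and~2 are equivalent (both proofs implicitly use this), either route is legitimate; yours trades the paper's explicit square-root formulas for an IFT, which is slightly less constructive but avoids the analyticity check at $\eta=0$.
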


\subsubsection{Proof of Theorems~\protect\ref{theo:ep canonique} and~\protect\ref{theo:forme prep}}
For the proof of both theorems, by Lemma~\ref{lem:pts fixes}
and the Weierstrass Division Theorem, we can suppose
$\tilde f_\eta$ has the form 
\begin{equation}
  \tilde f_{\eta}(z) = \zbar + (\zbar^2 - \eta)\big(C_0(\eta) + C_1(\eta)\zbar 
    + (\zbar^2 - \eta) R(\eta,\zbar)\big),
\label{f_tilde}\end{equation}
with $C_0(0) = {1\over 2}$, $C_1(0) = {1\over 4} - {b(0)\over 2}$
and $R$ a mix analytic function. 
With the same
reasoning applied to $\tilde g_{\eta} := \tilde f_{\eta}\circ \tilde f_{\eta}$, we find
$$
  \tilde g_{\eta}(z) = z + (z^2 - \eta)\big(D_0(\eta) + D_1(\eta)z
    + (z^2 -\eta) Q(\eta,z)\big),
$$
with $D_0(0) = 1$, $D_1(0) = 1 - b(0)$
and $Q$ a mix analytic function. 

Lastly, we set
$$
  \tilde \tau_\pm := \tilde f_\eta'(\pm\sqrt\eta) = 1 \pm 2\ol{\sqrt\eta}\big(C_0 \pm C_1 \ol{\sqrt\eta}\big).
  %\qquad\qquad\quad \tilde\lambda_\pm := \tilde g'(\pm\sqrt\eta).
$$

\begin{proof}[Proof of Theorem~\ref{theo:ep canonique}]
  We complexify $\eta$. The fact that $\ep$ and
  $b$ are invariant is because the multipliers of
  $\tilde g_\eta$ are invariant. They are holomorphic 
  for $\eta\not=0$ since they are invariant under
  the permutation $\sqrt\eta \mapsto -\sqrt\eta$.
  Then we see they are bounded around $\eta=0$ using
  $\lambda_\pm = 1\pm D_0\sqrt\eta + O(\eta)$.
  The details are found in~\cite{germeDeploie}.
  
  Lastly, $\ep$ and $b$ are real-valued for $\eta$ real
  since the multipliers satisfy
  $$
    \lambda_\pm = \begin{cases}
      |\tau_\pm|^2, 
        & \text{if $\eta\geq0$;}\\
      \tau_\mp\ol{\tau_\pm}, 
        & \text{if $\eta < 0$;}
    \end{cases}
%    \tilde g_\eta'(\pm\sqrt\eta) 
%      = {\del \tilde f_\eta\over \del \zbar}(\pm\ol{\sqrt\eta})
%    \ol{{\del \tilde f_\eta\over\del\zbar}(\pm\sqrt\eta)},
  $$
  which follows from the chain rule.
\end{proof}

\begin{proof}[Proof of Theorem~\ref{theo:forme prep}] 
  Let $z_1=w_\eta(z) = z + (z^2 - \eta)\big(A_0(\eta) + A_1(\eta)z\big)$
  be a change of coordinate, where 
  $$
    A_0 = {\ol{\sqrt{\tilde\tau_+}} - \ol{\sqrt{\tilde\tau_-}}
      \over 4\sqrt\eta},\qquad\qquad
    A_1 = {\ol{\sqrt{\tilde\tau_+}} + \ol{\sqrt{\tilde\tau_-}} - 2
      \over 4\eta}.
  $$
  Let us complexify $\eta$.
  For $\eta\not=0$, $A_0$ and $A_1$ are analytic since they are
  invariant under the permutation $\sqrt\eta \mapsto -\sqrt\eta$.
  Also we have that 
  $\sqrt{\tilde\tau_\pm} = 1\pm\sqrt\eta + O(\eta)$, so $A_0$
  and $A_1$ are bounded when $\eta\to 0$. It follows that
  $A_0$ and $A_1$ are analytic. By direct computation, we see that
  $w_\eta'(\pm\sqrt\eta) = \ol{\sqrt{\tilde \tau_\pm}}$. Hence
  the derivatives of $f^\dag_\eta = w_\eta \circ \tilde f_\eta \circ w_\eta\inv$
  at $z_1 = \pm\sqrt\eta$ satisfies point 2. Also, point 1 is
  a consequence of point 2, and point 3 follows from
  the chain rule and point 2.

  We do a last change of coordinate and parameter, that will preserve
  points 1 to 3, so that the new family depends on its canonical parameter
  $\ep$. Let $\joli L(\eta,z_1) = \big(\ep(\eta), c(\eta)z_1\big) = \big(\ep,L_\eta(z_1)\big)$, where 
  $\ep\colon \eta\to\ep(\eta)$ is given by~\eqref{eq:def ep} and 
  $c(\eta) := \sqrt{\ep(\eta)/\eta}$. Since $\ep(\eta) = \eta\big(1 + O(\eta)\big)$,
  we see that the function $c$ is real analytic and real valued, so 
  $f_\ep = L_\eta\circ f^\dag_\eta\circ L_\eta\inv$ still satisfies
  points 1 to 3. Calling $z_2=c(\eta)z_1$, then we have
  $$
    f_\ep(z_2) = \zbar_2 + (\zbar_2^2 - c^2\eta)\big(B_0(\ep) + B_1(\ep)\zbar_2
      + (\zbar_2^2 - \ep) Q(\ep,\zbar_2)\big),
  $$
  for some real valued $B_0$ and $B_1$ and some remainder $Q$.
  Since $c^2\eta = \ep$, $f_\ep$ is in prepared form.
\end{proof}

\section{Fatou Coordinates}\label{sec:fatou}
Let $f_0$ be an antiholomorphic parabolic
germ of codimension~1. From now on,
we only consider prepared generic unfoldings 
depending on the canonical parameter $\ep$, that is
\begin{equation}
  f_\ep(z) = \zbar + (\zbar^2 - \ep)\big(B_0(\ep) + B_1(\ep)\zbar 
    + (\zbar^2 - \ep) Q(\ep,\zbar)\big),
\end{equation}
where $B_0$ and $B_1$ are real valued.

We will work with a representative of the germ,
also noted $f_\ep$,
defined on $(-r',r')\times D(0,r)$, where
$r',r >0$ may be as small as we want.

\subsection{Time coordinate}

We will work in the \emph{time coordinate} 
 of the vector field~\eqref{vector_field}. On a given simply connected domain in a disk $D(0,r)\setminus\{\pm\sqrt{\ep}\}$, it is defined up to a constant. Moreover it is ramified with period $2\pi i b(\eps)$ when turning around the two singular points as soon as $b(\ep)\neq0$: see Figure~\ref{fig:surface temps ep=0}.
 We will use two charts covering the two halves of $D(0,r)\setminus\{\pm\sqrt\ep\}$. 
 
 \begin{figure}[htbp]
  \centering
  \includegraphics{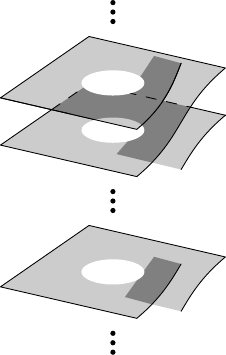}
  \caption{Riemann surface of the time coordinate close to the boundary of
  $D(0,r)$.}
  \label{fig:surface temps ep=0}
\end{figure}

\begin{prop}\label{prop:time}
 We define
  \begin{align}\begin{split}
      Z_\ep^+(z) &= \int_{r}^z {1 + \zeta b(\ep)\over \zeta^2 -\ep}\, \d\zeta,\\[2\jot]
        Z_\ep^-(z) &= \int_{-r}^z {1 + \zeta b(\ep)\over \zeta^2 -\ep}\, \d\zeta. 
  \end{split}\label{eq:coordonnee temps} \end{align}
The principal branches of $Z^\pm_\ep$ are defined by~\eqref{eq:coordonnee temps} 
 on  simply connected domains inside $\Cp\setminus\{-\sqrt\ep,\sqrt\ep\}$ containing $\pm r$.  
   
  Chosen in this way, these principal branches have the following properties:     
  \begin{enumerate}
    \item They satisfy
      \begin{equation}\label{eq:coordonnee temps_neg}
        Z_\ep^+(z) - Z_\ep^-(z) = \begin{cases}
          i\pi b(\ep), &\text{ if $\Im z > 0$  and $|z|=r$};\\[2\jot]
          -i\pi b(\ep),& \text{ if $\Im z < 0$ and $|z|=r$};
      \end{cases}
   \end{equation}
    \item They conjugate 
      $\vun$ on $T_1$;
    \item $Z_\ep^\pm(\zbar) = \ol{Z_\ep^\pm(z)}$.
  \end{enumerate}
 \end{prop}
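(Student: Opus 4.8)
The plan is to read off all three properties from the observation that the integrand of \eqref{eq:coordonnee temps} is $1/v_\ep$, so that $Z_\ep^\pm$ is nothing but a \emph{time coordinate} for the vector field \eqref{vector_field}. Set
\[
  \omega_\ep \;=\; \frac{1+\zeta\,b(\ep)}{\zeta^2-\ep}\,\d\zeta \;=\; \frac{\d\zeta}{v_\ep(\zeta)},
\]
a holomorphic $1$-form on $\Cp\setminus\{-\sqrt\ep,\sqrt\ep\}$ with two simple poles, whose residues $A_+$ at $\sqrt\ep$ and $A_-$ at $-\sqrt\ep$ satisfy $A_++A_-=b(\ep)$ (a one-line partial-fraction computation; this number is the ramification period divided by $2\pi i$, as noted before the statement). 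First I would fix, as in Figure~\ref{fig:surface temps ep=0}, two simply connected domains $\Omega_\ep^\pm\subseteq\Cp\setminus\{-\sqrt\ep,\sqrt\ep\}$, each symmetric under $\sigma$, with $\pm r\in\Omega_\ep^\pm$ and with $\Omega_\ep^+\cap\Omega_\ep^-$ meeting both arcs $\{|\zeta|=r,\ \pm\Im\zeta>0\}$ (the two charts cut open the monodromy of $\omega_\ep$ in complementary ways). Since $\omega_\ep$ is closed and $\Omega_\ep^\pm$ is simply connected, $Z_\ep^\pm(z)=\int_{\pm r}^z\omega_\ep$ is well defined and holomorphic on $\Omega_\ep^\pm$, independent of the path, and $(Z_\ep^\pm)'(z)=1/v_\ep(z)\neq0$ off the poles; this is the content of the unnumbered part of the statement.

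Property~(2) is then immediate: because $\d Z_\ep^\pm=\d\zeta/v_\ep(\zeta)$, the change of coordinate $Z_\ep^\pm$ carries the vector field $v_\ep(\zeta)\,\partial_\zeta$ to the constant field $\partial_Z$, whose time-$t$ flow is the translation $Z\mapsto Z+t$; evaluating at $t=1$ gives $Z_\ep^\pm\circ \vun_\ep=T_1\circ Z_\ep^\pm$ on the set where both sides make sense. Property~(3) follows from the reality of $\ep$ and $b(\ep)$: the form $\omega_\ep$ is invariant under $\zeta\mapsto\zbar$ combined with complex conjugation, the basepoints $\pm r$ are real, and $\Omega_\ep^\pm$ was chosen $\sigma$-symmetric, so reflecting a path from $\pm r$ to $z$ produces a path from $\pm r$ to $\zbar$ in the same domain along which the integral conjugates, giving $Z_\ep^\pm(\zbar)=\overline{Z_\ep^\pm(z)}$.

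The substance is property~(1). On the (disconnected) overlap $\Omega_\ep^+\cap\Omega_\ep^-$ the difference $Z_\ep^+-Z_\ep^-$ is locally constant, being a difference of two primitives of $\omega_\ep$; let $c_{\mathrm{up}}$ and $c_{\mathrm{down}}$ be its values on the components touching the upper and lower arcs of $\{|\zeta|=r\}$. Choosing $p$ in the lower component and $q$ in the upper one and writing $Z_\ep^\pm(q)-Z_\ep^\pm(p)$ as an integral along a path inside $\Omega_\ep^\pm$, one gets $c_{\mathrm{up}}-c_{\mathrm{down}}=\oint\omega_\ep$ over the loop formed by these two paths, which (with the complementary cuts) encircles both poles; by the residue theorem this equals $2\pi i\,(A_++A_-)=2\pi i\,b(\ep)$. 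On the other hand, property~(3) applied to $Z_\ep^+-Z_\ep^-$, together with the fact that $\sigma$ exchanges the two overlap components, gives $c_{\mathrm{down}}=\overline{c_{\mathrm{up}}}$; since $b(\ep)\in\R$ this forces $\mathrm{Im}\,c_{\mathrm{up}}=\pi\,b(\ep)$, and the vanishing of $\mathrm{Re}\,c_{\mathrm{up}}$ is pinned down by the symmetry of the construction (the symmetric choice of the $\Omega_\ep^\pm$ and of the basepoints), yielding exactly \eqref{eq:coordonnee temps_neg} as in the holomorphic model of~\cite{germeDeploie}.

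The main obstacle I anticipate is the set-up in the first paragraph: one must cut the two charts $\Omega_\ep^\pm$ carefully enough that they are simply connected, contain $\pm r$ and both boundary arcs, vary continuously with $\ep$ through the collision $\ep\to0$ of the singular points, and produce exactly the purely imaginary transition $\pm i\pi b(\ep)$ — and then track the path of integration and the residues accordingly. Once that choice is made (following \cite{germeDeploie}), (1)–(3) reduce to the elementary verifications above, (1) being the only one that is more than a formality.
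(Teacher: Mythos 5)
Your overall strategy coincides with the paper's (whose proof is literally ``shown by direct computation'', plus the remark that points~1 and~3 use the reality of $b$): the flow-box argument for point~2, the Schwarz-reflection argument for point~3, and, for point~1, the observations that $Z_\ep^+-Z_\ep^-$ is locally constant on the overlap, that $c_{\mathrm{up}}-c_{\mathrm{down}}=\oint\omega_\ep=2\pi i\,b(\ep)$, and that the reflection gives $c_{\mathrm{down}}=\overline{c_{\mathrm{up}}}$, hence $\Im c_{\mathrm{up}}=\pi b(\ep)$, are all correct and cleanly argued.

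The gap is precisely the clause you flagged as ``pinned down by the symmetry of the construction'', namely the vanishing of $\Re c_{\mathrm{up}}$. The symmetry $z\mapsto\zbar$ only yields $c_{\mathrm{down}}=\overline{c_{\mathrm{up}}}$, so it controls the imaginary part and says nothing about the real part; and the real part can simply be computed. Writing $Z_\ep^+(z)-Z_\ep^-(z)=\int_\gamma\omega_\ep$ for a path $\gamma$ from $r$ to $-r$ through $z$ and taking for $\gamma$ the upper semicircle of $|\zeta|=r$, partial fractions give
\[
\int_\gamma \frac{1+b(\ep)\zeta}{\zeta^2-\ep}\,\d\zeta
\;=\; i\pi b(\ep)\;+\;\frac{1}{\sqrt\ep}\,\log\frac{r+\sqrt\ep}{r-\sqrt\ep},
\]
and the second term is real and tends to $2/r$ as $\ep\to0$, so it does not vanish. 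Worse, for $\ep>0$ the periods $\alpha_\ep^\pm=\pm i\pi/\sqrt\ep+i\pi b(\ep)$ are purely imaginary, so $\Re\int_\gamma\omega_\ep$ is independent of the homotopy class of $\gamma$ in $\Cp\setminus\{\pm\sqrt\ep\}$: no choice of cuts, domains, or paths with base points $\pm r$ can make this real part disappear. Consequently the identity of point~1 can only hold modulo this explicit real additive constant (which is harmless for the sequel, since Fatou coordinates are normalized only up to real translations and the constant can be absorbed there), or after an adjustment of the base points; in either case the step ``$\Re c_{\mathrm{up}}=0$ by symmetry'' is not a formality that can be waved through --- you must compute the constant and state explicitly how it is disposed of.
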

\begin{proof}These properties are shown by direct computation.
Note that points 1 and 3 use the
fact that $\ol{b(\ep)} = b(\epbar)$, which follows from
Theorem~\ref{theo:ep canonique}.\end{proof}

If we look at $v_\ep^1$ on a disk
$D(0,r)$, then for $\ep \ll r$, the dynamics 
of $v_\ep^1$ is almost identical to the dynamics of $v_0^1$
near the boundary of $D(0,r)$. Therefore, the
time coordinate for $\ep$ small is similar to
the time coordinate for $\ep=0$ close to the
hole. See Figure~\ref{fig:surface temps ep=0}.

To obtain the rest of the surface corresponding to the
interior of $D(0,r)$, the path of integration in~\eqref{eq:coordonnee temps}
will have to turn around the ramification points
$\pm\sqrt\ep$. The surface of Figure~\ref{fig:surface temps ep=0}
will repeat itself with periods given by
\begin{equation}\label{eq:periode}
  \alpha_\ep^\pm := \int\limits_{{\gamma_\ep^\pm}} {1 + b(\ep)\zeta \over \zeta^2 - \ep}\d\zeta
    = \pm {i\pi\over \sqrt\ep} + i\pi b(\ep),
\end{equation}
where $\gamma_\ep^\pm$ is a small simple curve surrounding exactly the singular point $\pm \sqrt\ep$.
In particular, $T_{\alpha_\ep^\pm}$ are covering transformations of the
surface. 

Let $\Sigma$ be the lift of $\sigma$ on the time coordinate.
By point 2 of the Proposition~\ref{prop:time}, we see that $\Sigma$ is the
analytic continuation of the complex conjugation on the chart
of the principal branch obtained with the relation
$$
  \Sigma\circ T_{\alpha_\ep^\pm} = T_{\ol{\alpha_\ep^\pm}}\circ \Sigma.
$$

\subsection{Translation Domains}
We define charts in the time coordinate
determined naturally by the dynamics of $g_\ep = f_\ep\circ f_\ep$.
Let $G_\ep$ be the lift of $g_\ep$ in the
time coordinate. 

In~\cite{germeDeploie}, it is proved that 
$|G_\ep - T_1| \leq C\max\{r,r'\}$. Therefore,
$G_\ep$ is as close to $T_1$ as we want for
$r,r'$ small enough. If we take a vertical line $\ell$ or, for any $\beta\in (0,\frac{\pi}2)$, a slanted line $\ell$ making an angle in $(\beta, \pi - \beta)$ with the horizontal direction, then for $r,r'$ small enough, 
$\ell$ and $G_\ep(\ell)$ will not intersect.

\begin{rem} Occasionally we will suppose that $z=r$ belongs to the domain. This is legitimate by slightly restricting $r$. \end{rem}

\begin{deff}\label{def:trans dom} Let $\ell$ be a vertical line
with the distance between $\ell$ and
the holes at least $2$. We consider
the vertical strip $B_\ell$ lying between
$\ell$ and $G_\ep(\ell)$, including its
boundary. We define the \emph{translation domain} by
$$
  U_\ep = \{G_\ep^{\circ n}(Z) \mid Z\in B_\ell\}.
$$
The translation domain is called  a 
\emph{Glutsyuk} (resp.~\emph{Lavaurs}) translation
domain when $\ep>0$ (resp. $\ep<0$).
\end{deff}

For $\ep > 0$ (resp.~$\ep<0$), the holes are
aligned vertically (resp.~horizontally). The
strip $B_\ell$ is parallel (resp.~transversal)
to the line of the holes. 
See Figures~\ref{fig:glutsyuk} and~\ref{fig:lavaurs}.

\begin{figure}[htbp]
  \centering
    \subcaptionbox{Glutsyuk translation domain\label{fig:glutsyuk}}
      [.4\textwidth]{\includegraphics[scale=.5]{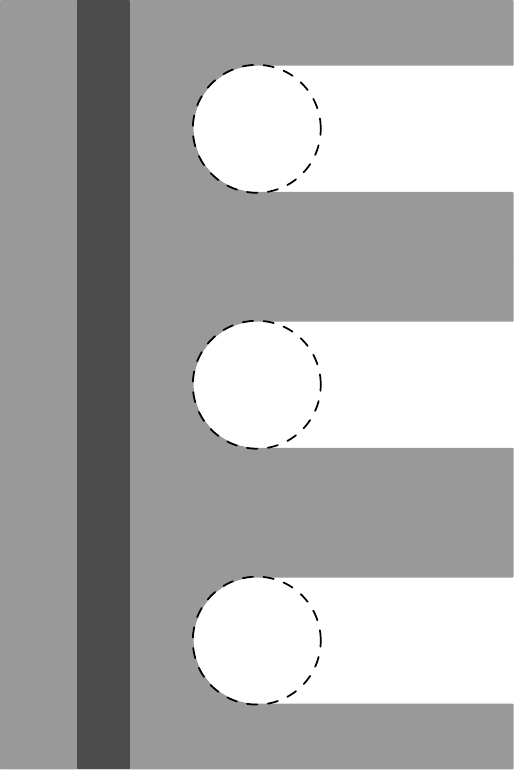}}
    \subcaptionbox{Lavaurs translation domain\label{fig:lavaurs}}
      [.4\textwidth]{\includegraphics[scale=.5]{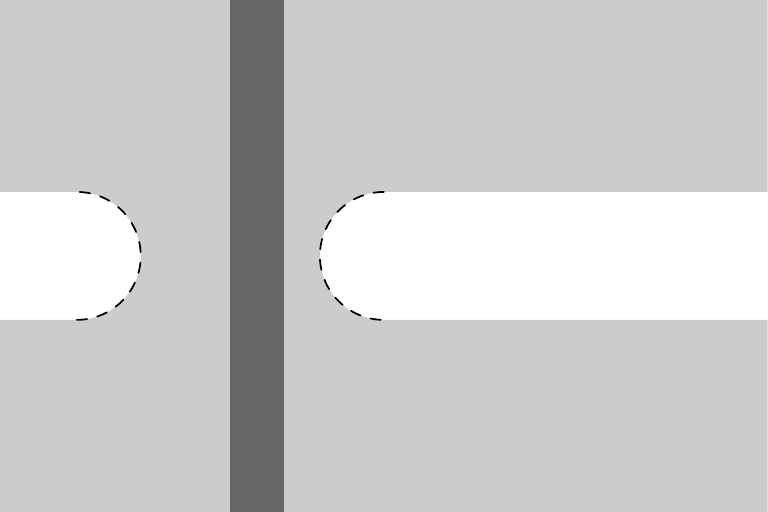}}
    \caption{Translation domains for $\ep > 0$ (left) and
      $\ep < 0$ (right).}
\end{figure}

\subsection{Existence of Fatou Coordinates}
The sectorial normalization of $f_\ep$ can 
easily be proved using the analogous theorem for
$f_\ep\circ f_\ep$. 

\begin{theo}\label{theo:coord fatou}
  Let $f_\ep$ be a generic unfolding in prepared form
  of an antiholomorphic parabolic germ of codimension~1.
  Let $F_\ep$ be the lift of $f_\ep$ on the time coordinate.
  \begin{enumerate}
    \item (Existence) For every $\ep>0$ (resp.~$\ep<0$),
      on every Glutsyuk (resp.~Lavaurs) translation domain
      $U_\ep^\pm$, there exists a diffeomorphism 
      $\Phi_\ep^\pm\colon U_\ep^\pm \to \Cp$ such that
      \begin{equation}\label{eq:Fatou}
        \Phi_\ep^\pm\circ F_\ep\circ (\Phi_\ep^\pm)\inv
          = \STt.
      \end{equation}
    \item (Uniqueness) If $\tilde \Phi_\ep^\pm$ is another
      diffeomorphism satisfying~\eqref{eq:Fatou}, then
      there exists a real constant $R_\pm(\ep)$ such that
      $$
        \tilde \Phi_\ep^\pm \circ (\Phi_\ep^\pm)\inv = T_{R_\pm(\ep)}.
      $$
    \item For $\ep>0$, the Fatou coordinates on $U_\ep^\pm$
      commute with $T_{\alpha_\ep^\pm}$, where $\alpha_\ep^\pm$
      is the period~\eqref{eq:periode}.
    \item (Dependence on parameter) Let
      $$
        Q^\pm = \bigcup_\ep \{\ep\}\times U_\ep^\pm.
      $$
      Then $Q^\pm$ is open in $\R\times \Cp$ and
      there exists a family of Fatou coordinates ${\{\Phi_\ep^\pm\}}_\ep$
      continuous on $Q^\pm$ and mix analytic for $\ep\not=0$.
     The family is uniquely determined by
      \begin{equation}\label{eq:determine Phi ep reel}
        \Phi_\ep^\pm(X_\ep) = C(\ep),
      \end{equation}
      where $X_\ep$ is a base point, $C$ is real-valued,
      and both $X_\ep$ and $C$ are real analytic in $\ep\neq0$ 
      with continuous limit at $\ep=0$.
  \end{enumerate}
\end{theo}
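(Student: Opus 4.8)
The plan is to deduce everything from the analogous theorem for the holomorphic family $g_\ep = f_\ep\circ f_\ep$, proved in \cite{germeDeploie}, and then to use the antiholomorphic symmetry of the setup to ``take a square root'' of the holomorphic Fatou coordinates. First I would recall that on each translation domain $U_\ep^\pm$ the holomorphic normalisation for $g_\ep$ gives a Fatou coordinate $\Psi_\ep^\pm$ conjugating $G_\ep$ to $T_1$, unique up to an additive constant, depending analytically on $\ep\neq 0$ and continuously at $\ep=0$; moreover for $\ep>0$ it commutes with $T_{\alpha_\ep^\pm}$. The existence part (1) will then follow by setting $\Phi_\ep^\pm$ to be an appropriate normalisation of $\Psi_\ep^\pm$ and checking that $\Phi_\ep^\pm\circ F_\ep\circ(\Phi_\ep^\pm)\inv$ is an antiholomorphic map commuting with $T_1$ (since $F_\ep$ conjugates into the square root of $T_1$-commuting data), hence of the form $\STtt_{c}$ for some constant $c$; absorbing $c$ into the base-point normalisation makes it exactly $\STt$. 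The key structural input is point~3 of Proposition~\ref{prop:time}, $Z_\ep^\pm(\zbar)=\ol{Z_\ep^\pm(z)}$, which says the lift $\Sigma$ of $\sigma$ is (the analytic continuation of) complex conjugation, so $F_\ep$ really does lift an antiholomorphic map and $F_\ep\circ F_\ep = G_\ep$ on the relevant domain.

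For uniqueness (2): if $\wt\Phi_\ep^\pm$ also conjugates $F_\ep$ to $\STt$, then both square to conjugacies of $G_\ep$ to $T_1$, so by the holomorphic uniqueness statement $\wt\Phi_\ep^\pm\circ(\Phi_\ep^\pm)\inv = T_{R}$ for some $C\in\Cp$; plugging this into the intertwining relation with $\STt$ and using $\STt\circ T_R = T_{\ol R}\circ\STt$ forces $\ol R = R$, i.e.\ $R\in\R$. Point~3 for $\ep>0$ is immediate: $T_{\alpha_\ep^\pm}$ already commutes with $G_\ep$ and with the holomorphic $\Psi_\ep^\pm$, and this property is inherited by the square-root normalisation (again up to checking no phase obstruction, using that $\alpha_\ep^\pm$ is purely of the form $\pm i\pi/\sqrt\ep + i\pi b(\ep)$ and $\STt$ conjugates $T_{\alpha}$ to $T_{\ol\alpha}$, consistently).

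For the dependence on parameter (4): openness of $Q^\pm$ in $\R\times\Cp$ is the same statement as in \cite{germeDeploie} for the holomorphic family, since the translation domains are defined identically through $G_\ep$. Continuity on $Q^\pm$ and mix analyticity for $\ep\neq 0$ follow because $\Phi_\ep^\pm$ is obtained from the holomorphic Fatou coordinate (which has these properties) by a normalisation that is itself continuous/analytic; here I would invoke Lemma~1.3(?) on the behaviour of mix analytic functions under composition — more precisely, since $F_\ep$ is mix anti-analytic and $G_\ep$ mix analytic, the square-root construction stays within the mix-analytic category for $\ep\neq0$. Finally the normalisation condition $\Phi_\ep^\pm(X_\ep)=C(\ep)$ with $X_\ep$, $C$ real analytic and real-valued pins down the additive constant uniquely and is compatible with the conjugation symmetry $\Sigma$; choosing $X_\ep$ on the real axis and demanding $C(\ep)\in\R$ is exactly what makes the chosen branch $\Sigma$-equivariant.

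\textbf{Main obstacle.} I expect the real work to be in (1): verifying that the square root of the holomorphic normalisation can be chosen to conjugate $F_\ep$ \emph{exactly} to $\STt$ rather than to $\STtt_{c}$ with an unremovable $c$, and that the choice can be made coherently — i.e.\ simultaneously compatible with the $\Sigma$-symmetry, with the base-point normalisation, and (for $\ep>0$) with $T_{\alpha_\ep^\pm}$. This is a matter of tracking how an additive constant transforms under the antiholomorphic conjugation $Z\mapsto\ol Z$ and showing the relevant cocycle is a coboundary; the underlying reason it works is that $\STt$ is an involution-type normal form whose centraliser among translations is exactly the real translations, which is precisely what the uniqueness clause (2) records.
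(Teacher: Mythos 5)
Your overall strategy is the same as the paper's (derive everything from the holomorphic Fatou coordinates of $g_\ep=f_\ep\circ f_\ep$ and then adjust by the antiholomorphic symmetry), and your treatments of points (2), (3) and (4) are essentially the paper's. But there is a genuine gap at the heart of (1). You set $P_\ep=\Phi_\ep^\pm\circ F_\ep\circ(\Phi_\ep^\pm)\inv$ and assert that, being antiholomorphic and commuting with $T_1$, it must be of the form $\Sigma\circ T_c$. That inference is false as stated: $W\mapsto \ol W+e^{-2\pi i \ol W}$ is antiholomorphic and commutes with $T_1$ but is not $\Sigma$ composed with a translation. In general a $T_1$-commuting map only has a Fourier expansion $W+c_0+\sum_n c_ne^{2\pi i nW}$ (after composing with $\Sigma$), and killing all the nonconstant Fourier modes is precisely the nontrivial content of the existence statement. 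The paper does this by a global rigidity argument on the orbit space: from $P_\ep\circ P_\ep=T_1$ one sees that $\Sigma\circ P_\ep$ descends to a holomorphic diffeomorphism of the quotient — for $\ep>0$ the torus obtained by quotienting by $T_1$ and $T_{\alpha_\ep^\pm}$ (with which $\Sigma\circ P_\ep$ commutes), whose automorphisms are translations; for $\ep<0$ the \'Ecalle cylinder $\Cp^\ast$ compactified by the two fixed points, which $\Sigma\circ P_\ep$ preserves because $f_\ep$ exchanges $\pm\sqrt\ep$, so the induced map is an automorphism of the sphere, hence linear. Your proposal never invokes the orbit-space geometry, and your parenthetical ``since $F_\ep$ conjugates into the square root of $T_1$-commuting data'' does not substitute for it.

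A second, smaller point: you flag as the ``main obstacle'' the possibility of an unremovable constant $c$, but this part is actually easy once the previous step is done, and the mechanism is the involution identity you already wrote down. From $P_\ep\circ P_\ep=T_1$ and $P_\ep=\Sigma\circ T_{C(\ep)}$ one gets $C(\ep)+\ol{C(\ep)}=1$, i.e.\ $C(\ep)=\tfrac12+iy(\ep)$ with $y(\ep)\in\R$; the real part $\tfrac12$ is exactly the one appearing in $\STt$, and the imaginary part is absorbed by replacing $\Phi_\ep^\pm$ with $T_{iy(\ep)/2}\circ\Phi_\ep^\pm$ (this also keeps continuity in $\ep$ and mix analyticity for $\ep\neq0$, since $y$ inherits them). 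So the cocycle you worry about is automatically a coboundary; the step that genuinely needs proof is the translation rigidity above, which your argument omits.
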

\begin{proof}
  1 and 4.~It is proved in~\cite{germeDeploie} that we can
  construct Fatou coordinates of $g_\ep$ on translation domains.
  A Fatou coordinate $\Phi_\ep^\pm\colon U_\ep^\pm\to \Cp$
  on the translation domain $U_\ep^\pm$ is a change
  of coordinate that rectifies $G_\ep$ to the normal
  form: $\Phi_\ep^\pm\circ G_\ep \circ (\Phi_\ep^\pm)\inv = T_1$.
  We can choose a family ${\{\Phi_\ep^\pm\}}_\ep$ that is 
  continuous in $(\ep,Z)$ and mix analytic for $\ep\not=0$,
  it suffices for instance to choose Fatou coordinates 
  with a fixed base point $Z_0$: $\Phi_\ep(Z_0)=0$.
  Note that on a Glutsyuk translation domain $U_\ep^\pm$ ($\ep > 0$),
  Fatou coordinates commute with $T_{\alpha_\ep^\pm}$.

  Let $P_\ep = \Phi^\pm_\ep\circ F_\ep\circ (\Phi^\pm_\ep)\inv$.
  We have $P_\ep\circ P_\ep = T_1$, so $P_\ep$ and $T_1$ commute.
  We prove that $\Sigma\circ P_\ep$ is a translation. 
  
  In the Fatou coordinate of the Glutsyuk translation domain
  ($\ep > 0)$,
  we can describe a fundamental domain of the orbits of
  $G_\ep$ on $U^\pm_\ep$ by quotienting the
  space by $T_1$ and $T_{\alpha_\ep^\pm}$. This results in a torus,
  where each point represents an orbit of $G_\ep$ on $U_\ep^\pm$.
  Since $\Sigma\circ P_\ep$ commutes with $T_{\alpha_\ep^\pm}$ and $T_1$, it
  induces a holomorphic diffeomorphism of this torus on itself. Such a
  mapping must be a translation, so it
  follows that $P_\ep$ is of the form $\Sigma\circ T_{C(\ep)}$,
  for some $C(\ep)$.
  
  Similarly, in the Fatou coordinate of the Lavaurs translation
  domain ($\ep < 0$), a fundamental domain of the orbits of $G_\ep$ on
  $U^\pm_\ep$ is obtained by the quotient
  of $T_1$, yielding the well-known \'Ecalle cylinder. The cylinder
  is isomorphic to $S^2 \setminus\{0,\infty\} = \Cp^\ast$. Each
  point of $\Cp^\ast$ corresponds to an orbit of $G_\ep$ on
  $U_\ep^\pm$, and we can fill the holes at $0$ and $\infty$ by 
  identifying them to the fixed points.
  Since $\Sigma\circ P_\ep$ commutes with $T_1$ and $f$ maps $\pm\sqrt\ep$
  to $\mp\sqrt\ep$, $\Sigma\circ P_\ep$ induces a holomorphic diffeomorphism
  of the sphere on itself, and such a mapping must be a
  linear map. It follows that $P_\ep$ is of
  the form $\Sigma \circ T_{C(\ep)}$ for some $C(\ep)$.
   
  In both cases, $P_\ep$ is of the form $\Sigma\circ T_{C(\ep)}$, for
  some $C(\ep) = {1\over 2} + iy(\ep)$, $y\in\R$, where $y$ is real analytic in $\ep\neq0$ with continuous limit at $\ep=0$. It follows that
  $T_{iy(\ep)/2}\circ \Phi^\pm_\ep$ is a Fatou coordinate of $f_\ep$.
  That~\eqref{eq:determine Phi ep reel} determines the
  family ${\{\Phi_\ep^\pm\}}_\ep$ follows from point 2 below.
 
  2.~Recall that the Fatou coordinates of $g_\ep$ are unique up
  to translation. To preserve~\eqref{eq:Fatou}, the constant of
  translation must be real.

  3.~This is true for Fatou coordinates of $g_\ep$ (see~\cite{germeDeploie}). 
  
\end{proof}

\begin{cor} Each strip $B_\ell$ is conformally equivalent to a sphere minus two points. 
\end{cor}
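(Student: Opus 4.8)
The claim is that each strip $B_\ell$ is conformally equivalent to a sphere minus two points, i.e.\ to $\Cp^\ast$. The strip $B_\ell$ lies between a vertical line $\ell$ and its image $G_\ep(\ell)$, including the boundary, and the two lines do not intersect (by the choice of $r,r'$ small). The plan is to use the Fatou coordinate $\Phi_\ep^\pm$ of Theorem~\ref{theo:coord fatou}, or rather the underlying Fatou coordinate of $g_\ep$, to transport $B_\ell$ to a standard strip between a line and its translate by $T_1$, and then to quotient by $T_1$.

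First I would restrict attention to one of the two translation charts so that $B_\ell \subseteq U_\ep^\pm$, and apply the Fatou coordinate $\Psi := \Phi_\ep^\pm$ (for $g_\ep$) that conjugates $G_\ep$ to $T_1$. Since $\Psi$ is a biholomorphism onto its image and $\Psi\circ G_\ep = T_1\circ\Psi$, the set $\Psi(B_\ell)$ is a topological strip bounded by the real-analytic curve $\Psi(\ell)$ on one side and its translate $T_1(\Psi(\ell)) = \Psi(G_\ep(\ell))$ on the other; because $\ell$ and $G_\ep(\ell)$ are disjoint and $G_\ep$ is close to $T_1$, the curve $\Psi(\ell)$ is a Jordan arc going to infinity in both directions, and $\Psi(B_\ell)$ is the closed region between $\Psi(\ell)$ and $\Psi(\ell)+1$. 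Now the quotient map $Z\mapsto \exp(2\pi i Z)$ glues the two boundary curves of this strip (it identifies $Z$ with $Z+1$), and the image is a region of $\Cp^\ast$ bounded by a single Jordan curve; by the Riemann mapping theorem it is conformally a disc, and adding back the identified boundary we obtain a domain conformally equivalent to $\Cp^\ast$, i.e.\ the sphere minus two points. Equivalently, and more cleanly: the strip $B_\ell$ with its two boundary curves identified by $G_\ep$ is precisely one copy of the \'Ecalle cylinder constructed inside the proof of Theorem~\ref{theo:coord fatou}, which was already observed there to be isomorphic to $S^2\setminus\{0,\infty\} = \Cp^\ast$; the corollary is just recording that observation at the level of a single fundamental strip before filling in the punctures.

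The cleanest write-up would therefore be: apply the Fatou coordinate to map $B_\ell$ to the standard strip $\{0\le \Re Z\le 1\}$ (up to a conformal correction near the boundary curves, using that $\Psi(\ell)$ is a graph over the imaginary axis since $G_\ep$ is $C^1$-close to $T_1$), then note that identifying the two vertical boundary lines by $T_1$ yields $\Cp/\Z \cong \Cp^\ast$, and a fundamental strip for this quotient together with its glued boundary is conformally $\Cp^\ast \cong S^2\setminus\{2 \text{ points}\}$. The main obstacle, such as it is, is purely a matter of care with boundaries: one must check that $\Psi(\ell)$ really is a nice Jordan arc tending to $\pm i\infty$ (so that the strip it bounds is conformally a standard half-plane-like strip and the gluing is well defined), which follows from the estimate $|G_\ep - T_1|\le C\max\{r,r'\}$ quoted from~\cite{germeDeploie} together with the slope condition on $\ell$; no genuinely new estimate is needed.
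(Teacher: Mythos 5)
Your proof is correct and follows exactly the route the paper intends: the corollary is just the observation, already contained in the proof of Theorem~\ref{theo:coord fatou} (and used again for the spheres $\Ss^\pm$ in Section~\ref{sect:orbit_negative}), that in a Fatou coordinate the strip becomes a width-one strip whose sides, identified by $T_1$, are glued by $W\mapsto \exp(\pm 2i\pi W)$ into the \'Ecalle cylinder $\Cp^\ast$. Your reading of the statement (the strip with its two boundary curves identified via $G_\ep$) is the intended one, and the Riemann-mapping detour in your middle paragraph is unnecessary since the exponential already maps the glued strip biholomorphically onto $\Cp^\ast$.
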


\section{Space of orbits}\label{sec:orbits}
% TODO Écrire intro de la section quand la section
% sera un peu plus complète.

\subsection{Transition Functions}

Equation~\eqref{eq:coordonnee temps_neg} from Proposition~\ref{prop:time} 
allows us to define a transition function 
$T_{-i\pi b}= Z_\ep^- \circ (Z_\ep^+)\inv$ on the 
connected component above the fundamental hole in
$U_\ep^+$ to the corresponding component in $U_\ep^-$,
see Figure~\ref{fig:Tipib}.
Since $T_{-i\pi b}$ commutes with $T_1$, it sends orbits of the holomorphic normal form $v_\ep^1$ on orbits of $v_\ep^1$.
\begin{figure}[htbp]
    \subcaptionbox{Transition functions on a Glutsyuk domain
      \label{fig:transition ep>0}}
      {\includegraphics[scale=1]{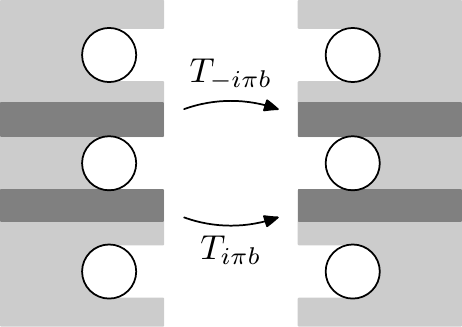}}\hfill
    \subcaptionbox{Transition functions on a Lavaurs domain
      \label{fig:transition ep<0}}
      {\includegraphics[scale=1]{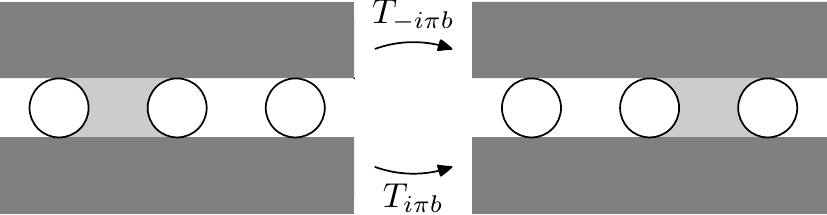}}
    \caption{Transition functions between time coordinates
      for $\ep > 0$ (left) and $\ep < 0$ (right).}
      \label{fig:Tipib}
\end{figure}

We define the \emph{transition functions} of $f_\ep$
that identify orbits represented on both $U_\ep^+$ and $U_\ep^-$.

\begin{deff}\label{def:psi}
  Let $(\Phi_\ep^+,\Phi_\ep^-)$ be a pair of Fatou
  coordinates of $f_\ep$ on $U_\ep^+$ and $U_\ep^-$
  respectively. Let $V_\ep^\pm = \Phi_\ep^\pm(U_\ep^\pm)$.
  We define the \emph{transition function} of $f_\ep$
  on the connected component above the fundamental hole 
  where the composition is defined by
 \begin{equation}\label{def_psi}
    \Psi_\ep = \Phi_\ep^- \circ T_{-i\pi b} \circ (\Phi_\ep^+)\inv.
  \end{equation}
 
\begin{notation} We will note it $\Psi^\infty_\ep$ for $\ep\leq 0$, and
  $\Psi^G_\ep$ for $\ep > 0$. \bigskip

  For $\ep \leq 0$, we define a transition function
  below the fundamental hole by
  $$
    \Psi_\ep^0 =\Phi_\ep^- \circ T_{i\pi b} \circ (\Phi_\ep^+)\inv. 
  $$
\end{notation}
\end{deff}

The most important property of the transition functions is that they commute
with $T_1$, so that they map orbits of $g_\ep$ on
orbits of $g_\ep$. 

Since they are defined using the Fatou coordinates of
$f_\ep$, they are also compatible with the orbits of 
$f_\ep$. For $\ep \leq 0$, we have
\begin{equation}\label{eq:PsiInf commute}
  \Psi_\ep^\infty \circ \STt = \STt \circ \Psi_\ep^0,
\end{equation}
and for $\ep > 0$, if we define $\Psi_\ep^U = \Psi_\ep^G$ and 
$\Psi_\ep^L = T_{\alpha_\ep^-}\circ \Psi_\ep^G\circ T_{\alpha_\ep^+}$ then
\begin{equation}\label{eq:PsiG commute}
  \Psi_\ep^U \circ \STt = \STt \circ \Psi_\ep^L.\end{equation} 
(See Figure~\ref{fig:strips}.)  

\begin{figure}[htbp]
    \centering
    \subcaptionbox{Horizontal strips and transition functions
      \label{fig:strips}}
      {\includegraphics[scale=.75]{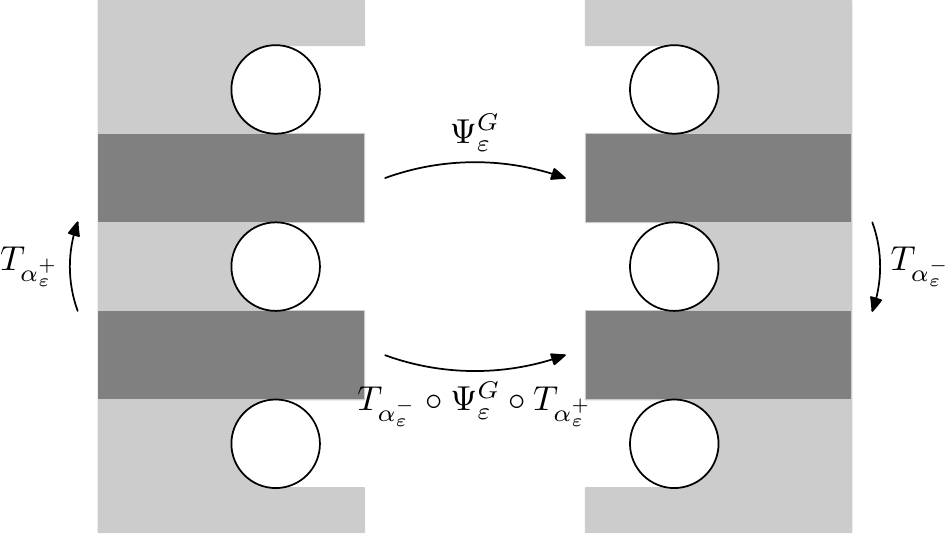}}\hfill
    \subcaptionbox{Covering of $D(0,r)\setminus\{\pm\sqrt\ep\}$
      by $S_\ep^+$ and $S_\ep^-$ and their intersection
      \label{fig:inter S}}
      [.45\textwidth]
      {\includegraphics[scale=1]{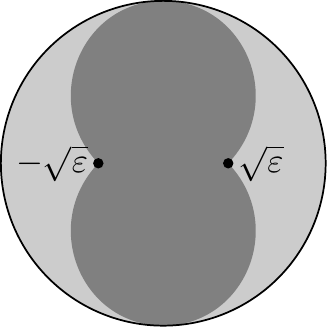}}
    \caption{Orbits covered twice.}
  \end{figure}

\subsection{Analytic Properties of the Transition Functions
and Normalization}

For $\ep\leq 0$, $\Psi_\ep^0$ is
determined by $\Psi_\ep^\infty$ in~\eqref{eq:PsiInf commute}.
Therefore, we only to describe the analytic properties of
$\Psi_\ep^\infty$ and $\Psi_\ep^G$. The properties are the
same as in the holomorphic case, with the exception of
Equation~\eqref{eq:termes const} below, which is specific to 
the antiholomorphic case.

\penalty-500
\begin{prop}\label{prop:prop analytiques} \hspace{0pt}
 \begin{enumerate}\item The transition functions of $f_\ep$ commute with
  $T_1$. They have a series expansion of the form
  \begin{equation}\label{eq:serie PsiInf}
    \Psi_\ep^\infty(W) = W + c_0^\infty(\ep) 
      + \sum_{n=1}^\infty c_n^\infty(\ep) e^{2i\pi n W},
  \end{equation}
  for $\ep \leq 0$, and 
  \begin{equation}\label{eq:serie PsiG}
    \Psi_\ep^G(W) = W + c_0^G(\ep) 
      + \sum_{n\in Z^\ast} c_n^G(\ep) e^{2i\pi nW},
  \end{equation}
  for $\ep > 0$.
  The constant terms satisfy
  \begin{equation}\label{eq:termes const}
    \Im c_0^\infty(\eps) = -i\pi b(\eps)
    \qquad\text{ and }\qquad
    \Im c_0^G(\eps) = -i\pi b(\eps).
    \end{equation}
    \item Moreover, there exists a choice of Fatou coordinates $\Phi_\ep^\pm$ for which
      the constant terms are purely imaginary:\begin{align}
    c_0^\infty(\ep) &= -i\pi b(\ep),
    \quad\text{for $\ep \leq 0$},\label{c_normalized_1}\\[2\jot]
    c_0^G(\ep) &= -i\pi b(\ep),
    \quad\text{for $\ep > 0$}.\label{c_normalized_2}
  \end{align}  
  
%\item The Lavaurs translation is given by\begin{equation}T^L=T_{-\frac{i\pi}{\sqrt{\eps}}}.\label{value_Lavaurs}\end{equation}
  \end{enumerate}
      \end{prop}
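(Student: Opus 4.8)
The strategy is to derive everything from the structure of the Fatou coordinates constructed in Theorem~\ref{theo:coord fatou} together with the transition relations \eqref{eq:coordonnee temps_neg} and the conjugacy \eqref{eq:Fatou}. First I would establish point~1. The commutation of $\Psi_\ep^\infty$ and $\Psi_\ep^G$ with $T_1$ is immediate: by \eqref{eq:Fatou} each $\Phi_\ep^\pm$ conjugates $F_\ep$ to $\STt$, hence conjugates $g_\ep = f_\ep\circ f_\ep$ to $(\STt)^{\circ 2}=T_1$, while $T_{-i\pi b}$ commutes with $T_1$; composing, $\Psi_\ep$ commutes with $T_1$. A function commuting with $T_1$ and holomorphic on a horizontal strip (resp.\ half-strip) admits a Fourier expansion in $e^{2i\pi W}$; for $\ep\le 0$ the domain above the fundamental hole is a genuine half-plane-like region in Fatou coordinates so only nonnegative frequencies survive, giving \eqref{eq:serie PsiInf}, whereas for $\ep>0$ the Glutsyuk domain is a bi-infinite strip, so all frequencies $n\in\Z^\ast$ occur, giving \eqref{eq:serie PsiG}. (That the linear part is $W$ and not $cW+\text{const}$ follows because the transition function is tangent to a translation at the hole — this is inherited verbatim from the holomorphic case in \cite{germeDeploie}.)

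The only genuinely new point is \eqref{eq:termes const}. Here I would exploit the antiholomorphic symmetry $\Sigma$. The key observation is that $\Phi_\ep^\pm$ are Fatou coordinates of $f_\ep$, so by construction (end of the proof of Theorem~\ref{theo:coord fatou}) they satisfy $\Phi_\ep^\pm\circ F_\ep\circ(\Phi_\ep^\pm)\inv=\STt$, and moreover one can arrange $\Phi_\ep^\pm\circ\Sigma = \Sigma'\circ\Phi_\ep^\pm$ for the appropriate lift, using point~3 of Proposition~\ref{prop:time}, namely $Z_\ep^\pm(\zbar)=\ol{Z_\ep^\pm(z)}$, together with the relation $\Sigma\circ T_{\alpha_\ep^\pm}=T_{\ol{\alpha_\ep^\pm}}\circ\Sigma$. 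Combining this compatibility with the definition \eqref{def_psi} of $\Psi_\ep$ and with the fact that $T_{-i\pi b}$ is conjugated by $\Sigma$ to $T_{i\pi b}$ (since $\ol{b(\ep)}=b(\epbar)=b(\ep)$ for $\ep$ real), one gets a functional relation of the form $\ol{\Psi_\ep(\ol W)} = \Psi_\ep(W)+(\text{shift by }2i\pi b)$ on the overlap, or more precisely a relation tying $\Psi_\ep^\infty$ to $\Psi_\ep^0$ via $\Sigma$ that, read on constant terms through \eqref{eq:PsiInf commute} (resp.\ \eqref{eq:PsiG commute}), forces $c_0+\ol{c_0}$ — the real part — and pins the imaginary part to $-i\pi b(\ep)$. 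Concretely: $\Psi_\ep^0$ is the transition via $T_{i\pi b}$ rather than $T_{-i\pi b}$, so $c_0(\Psi_\ep^0)=c_0(\Psi_\ep^\infty)+2i\pi b$ at the level of the time-coordinate shift; reconciling this with \eqref{eq:PsiInf commute}, which says $\STt$ intertwines the two, yields $\Re c_0^\infty=\Re c_0^G$ relative to the symmetric choice and isolates $\Im c_0 = -i\pi b(\ep)$. I would write this out by carefully tracking the time-coordinate monodromy — that is the step I expect to be the main obstacle, because one must keep precise bookkeeping of which branch of $Z_\ep^\pm$ and which translate of the fundamental hole each $\Phi_\ep^\pm$ refers to.

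For point~2, given point~1 the real part $\Re c_0$ of the constant term is the only remaining freedom, and it can be absorbed. By uniqueness (point~2 of Theorem~\ref{theo:coord fatou}) the Fatou coordinates $\Phi_\ep^\pm$ are determined up to real translations $T_{R_\pm(\ep)}$; replacing $\Phi_\ep^-$ by $T_{R_-}\circ\Phi_\ep^-$ and $\Phi_\ep^+$ by $T_{R_+}\circ\Phi_\ep^+$ changes $c_0$ by $R_--R_+$, which is real. Choosing $R_--R_+ = -\Re c_0$ (say $R_+=0$, $R_-=-\Re c_0$), which is real-analytic in $\ep$ by the real-analyticity statements in Theorem~\ref{theo:coord fatou}, kills the real part and leaves $c_0 = -i\pi b(\ep)$ as asserted in \eqref{c_normalized_1}–\eqref{c_normalized_2}. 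One checks this normalizing translation is compatible across $\ep\le 0$ and $\ep>0$ and respects the continuity/mix-analyticity of the family, so the normalized Fatou coordinates form an admissible family. This completes the proof.
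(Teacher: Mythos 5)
Your overall architecture does coincide with the paper's: commutation with $T_1$ gives the Fourier expansions, boundedness of $\Psi_\ep^\infty(W)-W$ as $\Im W\to+\infty$ kills the negative modes for $\ep\le 0$, the intertwining relations \eqref{eq:PsiInf commute} and \eqref{eq:PsiG commute} are compared at the level of constant terms, and for point~2 the real-translation freedom of the Fatou coordinates (point~2 of Theorem~\ref{theo:coord fatou}) absorbs the real part of $c_0$, exactly as in the paper.

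The gap is at the crux, namely \eqref{eq:termes const} for $\ep\le 0$. There you need two inputs: relation \eqref{eq:PsiInf commute}, which upon comparing constant terms gives $c_0^\infty=\ol{c_0^0}$, and the relation $c_0^\infty-c_0^0=-2i\pi b$. You assert the latter ``at the level of the time-coordinate shift'', i.e.\ because $\Psi_\ep^0$ is built with $T_{i\pi b}$ and $\Psi_\ep^\infty$ with $T_{-i\pi b}$. That inference is not valid: writing $V=(\Phi_\ep^+)\inv(W)$, one has $\Psi_\ep^\infty(W)-W=\bigl(\Phi_\ep^-(V-i\pi b)-(V-i\pi b)\bigr)-\bigl(\Phi_\ep^+(V)-V\bigr)-i\pi b$, so the constant terms also record the asymptotic behaviour of $\Phi_\ep^\pm-\mathrm{id}$ at the two \emph{different} ends of the Lavaurs domains (the fixed points $\pm\sqrt\ep$); for $\ep<0$ these contributions need not have limits, let alone cancel, so bookkeeping of the charts $Z_\ep^\pm$ and their monodromy alone cannot yield the relation. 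The paper gets it as a genuinely dynamical statement: by Lemma~\ref{lem_orbite_neg}, in the time coordinate the first return maps around $\sqrt\ep$ and $-\sqrt\ep$ are $T_{-\alpha_\ep^+}$ and $T_{\alpha_\ep^-}$, giving \eqref{eq:premier retour inf}--\eqref{eq:premier retour 0}, and comparing constant terms there, with $\alpha_\ep^++\alpha_\ep^-=2i\pi b$ from \eqref{eq:periode}, produces $c_0^\infty-c_0^0=-2i\pi b$ (Remark~\ref{rem:cinf 2ipi}). This return-map/Lavaurs-translation input is the missing idea in your plan. Note also that for $\ep>0$ no such relation is needed: the paper compares the constant terms of \eqref{eq:PsiG commute} directly, using $\Psi_\ep^L=T_{\alpha_\ep^-}\circ\Psi_\ep^G\circ T_{\alpha_\ep^+}$ and the same identity $\alpha_\ep^++\alpha_\ep^-=2i\pi b$; your phrase ``$\Re c_0^\infty=\Re c_0^G$'' conflates the two parameter regimes (these constants never coexist), whereas the correct conclusion in each regime is $\Re c_0^\infty=\Re c_0^0$, resp.\ $c_0^G-\ol{c_0^G}=-2i\pi b$.
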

\begin{proof}
  Equations~\eqref{eq:serie PsiInf} and~\eqref{eq:serie PsiG} 
  follow from the fact that
  $\Psi^\infty_\ep$ and $\Psi_\ep^G$ commute with $T_1$.
  Furthermore, for $\ep \leq 0$,
  $c_n^\infty(\ep) = 0$ for $n< 0$ because $\Psi^\infty_\ep(W)-W$
  is bounded when $\Im W \to \infty$.

  For the proof of Equation~\eqref{eq:termes const} for
  $c_0^\infty$, we need the well-known equation 
  \begin{equation}
  c_0^\infty - c_0^0 =-2i\pi b.\label{c0-infty}
  \end{equation}
  This equation will be a direct consequence of Lemme~\ref{lem_orbite_neg}, 
  see Remark~\ref{rem:cinf 2ipi} below. By comparing the constant terms
  of~\eqref{eq:PsiInf commute}, we find $c_0^\infty = \ol{c_0^0}$. 
  Now Equation~\eqref{eq:termes const} for $c_0^\infty$ follows 
  by combining ~\eqref{c0-infty} with 
  $c_0^\infty = \ol{c_0^0}$.

For $c_0^G$, it follows by comparing the constant terms of
Equation~\eqref{eq:PsiG commute}.

It suffices to postcompose one Fatou coordinate with a real translation to obtain the normalizations \eqref{c_normalized_1} or \eqref{c_normalized_2}.
 \end{proof}

\begin{deff}\label{fatou_normalized}
  A family of transition functions 
  ${\{\Psi_\ep\}}_\ep$ is said to be
  \emph{normalized} if it is continuous in
  $(\ep,Z)$ and real analytic for $\ep\not=0$
  and if \eqref{c_normalized_1} or \eqref{c_normalized_2} is verified.
  The corresponding pairs of families of Fatou coordinates ${\{\Phi_\ep^\pm\}}_\ep$ are said to be \emph{normalized}.
\end{deff}

\subsection{Geometry of the Space of Orbits for $\ep > 0$}

\begin{theo}
  For $\ep>0$ the space of orbits $\joli O_\ep$ of $f_\ep$ 
  on $D(0,r)\setminus\{\pm\sqrt\ep\}$
  is described by two Klein bottles $K_{\ep}^\pm$,
  identified along Moebius strips $M_{\ep}^\pm$,
  by a map $[\Psi_\ep]$ induced by the transition function
  $\Psi_\ep$: see Figure~\ref{fig:klein}.

  We add two points $\{P_{\ep}^\pm\}$ to $\joli O_\ep$ corresponding
  to the fixed points $\pm\sqrt\ep$ with the following topology:
  the unique neighbourhood of $P_{\ep}^\pm$ is $K_{\ep}^\pm$.
\end{theo}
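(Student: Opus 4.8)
The plan is to realize $\joli O_\ep$ as the quotient of the orbit space of $g_\ep=f_\ep\circ f_\ep$ by the involution induced by $f_\ep$, and then to identify this quotient explicitly in the normalized Fatou coordinates. So first I would recall the holomorphic Glutsyuk picture from \cite{germeDeploie}: for $\ep>0$ the fixed points $\pm\sqrt\ep$ of $g_\ep$ have real multipliers $\lambda_\pm=\ol{\tau_\pm^2}\neq1$, and the orbit space of $g_\ep$ on $D(0,r)\setminus\{\pm\sqrt\ep\}$ is obtained by gluing the two \'Ecalle tori $\joli T_\ep^\pm:=\Phi_\ep^\pm(U_\ep^\pm)/\langle T_1,T_{\alpha_\ep^\pm}\rangle$ along annular regions identified by the transition maps $\Psi_\ep^U,\Psi_\ep^L$ of~\eqref{eq:PsiG commute}, the two fixed points being added as extra points whose only neighbourhood is the corresponding torus. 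The key structural fact, specific to $\ep>0$, is that $\alpha_\ep^\pm$ is then purely imaginary, so the period lattice of $\joli T_\ep^\pm$ is spanned by the translation $T_1$ and a purely imaginary translation.

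Next I would bring in the involution. Since $f_\ep$ conjugates $g_\ep$ to itself it carries $g_\ep$-orbits to $g_\ep$-orbits and hence induces a map $\iota_\ep$ of the orbit space of $g_\ep$ onto itself; as $f_\ep\circ f_\ep=g_\ep$ acts trivially on orbits, $\iota_\ep$ is an antiholomorphic involution, intrinsically attached to $f_\ep$, and $\joli O_\ep$ is by construction the quotient by $\iota_\ep$. For $\ep>0$ both points $\pm\sqrt\ep$ are fixed by $f_\ep$, so (as is already built into Theorem~\ref{theo:coord fatou}) $F_\ep$ preserves each translation domain $U_\ep^\pm$; consequently $\iota_\ep$ preserves each torus $\joli T_\ep^\pm$ and each of the transition annuli. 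By Theorem~\ref{theo:coord fatou} the map $F_\ep$ reads $\STt$ in the normalized Fatou coordinate on $U_\ep^\pm$, so the lift of $\iota_\ep$ to $\Cp$ is $W\mapsto\wbar+\tfrac12$; in the cylinder coordinate $w=e^{2i\pi W}$ on $\Cp/\langle T_1\rangle$ this is $w\mapsto-1/\wbar=L_{-1}\circ\tau$, the involution of the preliminaries.

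The heart of the proof is to check that $\iota_\ep$ acts \emph{freely}. On $\joli T_\ep^\pm$ the group acting on $\Cp$ is $\Gamma_\ep^\pm=\langle a,t\rangle$ with $a=\Sigma\circ T_{1/2}$ and $t=T_{\alpha_\ep^\pm}$, and one computes $a^2=T_1$ and $a\,t\,a\inv=T_{\ol{\alpha_\ep^\pm}}=T_{-\alpha_\ep^\pm}=t\inv$ — the last equality using that $\alpha_\ep^\pm$ is purely imaginary, which is exactly where $\ep>0$ enters. Hence every element is $a^kt^n$: for $k$ even it is a non-trivial translation, fixed-point-free, and for $k$ odd it has the form $W\mapsto\wbar+c$ with $\Re c=k/2\notin\Z$, hence no fixed point. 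So $\Gamma_\ep^\pm$ is a discrete group of Euclidean motions acting freely and properly discontinuously, $\Cp/\Gamma_\ep^\pm=\joli T_\ep^\pm/\iota_\ep$ is a closed surface, and since $a$ is a glide reflection and $\langle a,t\mid ata\inv=t\inv\rangle$ presents the Klein bottle group, this surface is the Klein bottle $K_\ep^\pm$. The same computation shows $L_{-1}\circ\tau$ has no fixed point on $\Cp^\ast$ (it would force $|w|^2=-1$), so each transition annulus descends to a Moebius strip $M_\ep^\pm$ inside the corresponding $K_\ep^\pm$. I expect this freeness to be the only real obstacle: it is precisely the $+\tfrac12$ in $\STt$ — i.e.\ that $\STt$ squares to $T_1$ — together with $\ep>0$ that prevents the orientation-reversing $\iota_\ep$ from having a fixed locus; for $\ep<0$ the periods acquire a real part and the argument correctly breaks down.

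Finally I would assemble the gluing and the fixed points. The transition map $\Psi_\ep^G$ commutes with $T_1$ (and with the periods, since the Fatou coordinates do and $T_{-i\pi b}$ is a translation) and satisfies $\Psi_\ep^U\circ\STt=\STt\circ\Psi_\ep^L$ by~\eqref{eq:PsiG commute}, so it descends to a well-defined identification $[\Psi_\ep]$ of $K_\ep^+$ with $K_\ep^-$ carried by the Moebius strips $M_\ep^\pm$; this exhibits $\joli O_\ep$ as two Klein bottles glued along $M_\ep^\pm$. Since $\pm\sqrt\ep$ are fixed by $f_\ep$, the two extra points of the orbit space of $g_\ep$ descend to two points $P_\ep^\pm$ of $\joli O_\ep$, and because the only neighbourhood of such a point before quotienting is the whole torus $\joli T_\ep^\pm$, the only neighbourhood of $P_\ep^\pm$ in $\joli O_\ep$ is $K_\ep^\pm$, as claimed.
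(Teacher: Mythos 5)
Your proof is correct and follows essentially the same route as the paper: describe the orbit space of $g_\ep$ as two \'Ecalle tori glued by the transition functions, then quotient by the involution induced by $f_\ep$, which reads $\STt$ in the normalized Fatou coordinates, so that each torus becomes a Klein bottle and each gluing annulus a Moebius strip. Your explicit verification that the group $\langle \Sigma\circ T_{1/2},\,T_{\alpha_\ep^\pm}\rangle$ acts freely (using that $\alpha_\ep^\pm$ is purely imaginary for $\ep>0$) makes precise a step the paper leaves implicit, but it is a refinement of the same argument rather than a different approach.
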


\begin{figure}[htbp]
  \centering
	\includegraphics{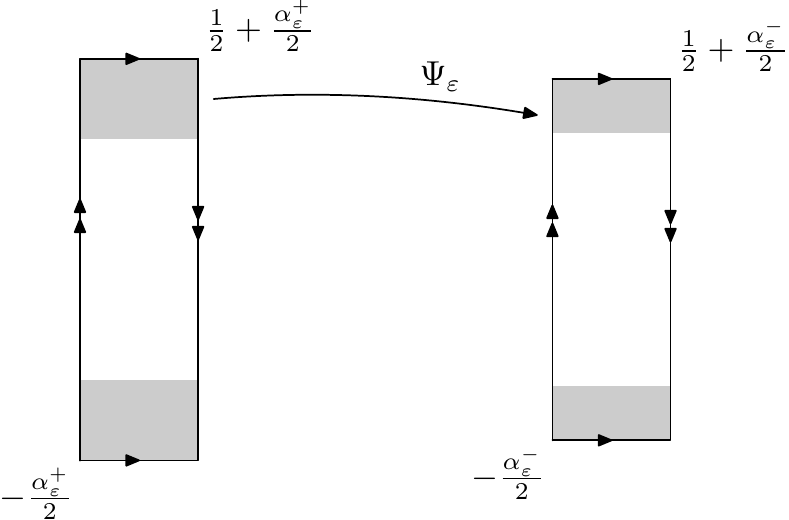}
  \caption{Klein bottles and transition function.}
	\label{fig:klein}
\end{figure}

\begin{proof}
  We set  $S_\ep^\pm=Z_\ep^{-1}(U_\ep^\pm)$, 
  which yields a covering of $D(0,r)\setminus\{\pm\sqrt\ep\}$: see 
  Figure~\ref{fig:inter S}.  
  Note that some orbits of $f_\ep$ in $D(0,r)\setminus\{\pm\sqrt\ep\}$,
 are covered by both charts.

 Let $V_\ep^\pm = \Phi_\ep^\pm(U_\ep^\pm)$. On $V_\ep^\pm$, the region of intersection corresponds to
  horizontal strips. The transition function $\Psi_\ep$ maps orbits from
  a strip in $V_\ep^+$ to the corresponding strip in $V_\ep^-$.
  Since the orbits repeat themselves according to the period
  $\alpha_\ep^\pm$ on $V_\ep^\pm$, the strips also repeat with
  associated transition functions. See Figure~\ref{fig:strips}.
  
  To describe the space of orbits of $f_\ep$, we begin by taking
  a vertical strip of width $1$ in both $V_\ep^+$ and $V_\ep^-$. Together, they
  intersect all the orbits of $f_\ep$ at least once, except the
  fixed points. We identify $W\in V_\ep^\pm$ with $T_1(W)$ and
  $T_{\alpha_\ep^\pm}(W)$, leaving us with two torii
  $[0,1]\times \big[-{\alpha_\ep^\pm\over 2}, {\alpha_\ep^\pm\over 2}\big]$.
  Since the transition function commutes with both $T_1$ and
  $T_{\alpha_\ep^\pm}$, it is well-defined on the torii. We identify
  together $w$ and $\Psi_\ep(w)$, as they represent the same orbit of
  $f_\ep$. This is the space of orbits of $f_\ep\circ f_\ep$. 
  
  Lastly, since $\STt$ is comptatible with $T_1$,  $T_{\alpha_\ep^\pm}$
  and $\Psi_\ep$,
  it induces a mapping on each torus. We identity $w$ and $\STt(w)$
  to obtain the space of orbits of $f_\ep$. Each torus becomes a 
  Klein bottle, as in Figure~\ref{fig:klein}.
\end{proof}

\subsection{Geometry of the Space of Orbits for $\ep < 0$}\label{sect:orbit_negative}

Note that the space of orbits of $f_\eps$ is a quotient of that of $g_\ep$. Hence we will describe both. This will need introducing a few notions.

 Let us consider two strips $B_{\ell_\pm}$ on each side of the fundamental hole, and let $\Ss^\pm$ be two spheres corresponding to the completion of the strips $B_{\ell_\pm}$ with sides identified. 
  Then each orbit of
  $f_\ep$ corresponds to at least one point of
  each sphere, with $\infty$ corresponding to $\sqrt\ep$, 
  and $0$ to $-\sqrt\ep$. Let $\psi_\ep^{0,\infty}$ be the
  mapping induced by $\Psi_\ep^{0,\infty}$ between neighbourhoods of $0$ and $\infty$ on the two spheres.
   
    \begin{figure}[htbp]
    \centering
    \includegraphics[scale=.75]{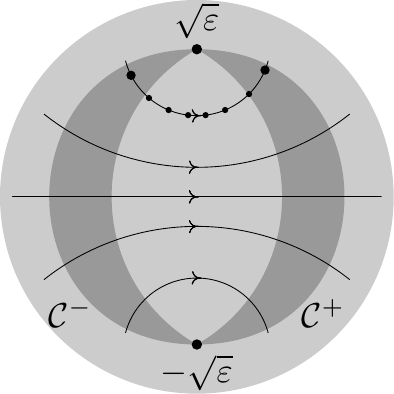}
    \caption{Point in $\mathcal{C}^-$ mapped to the point of its
    orbit in $\mathcal{C}^+$ by the Lavaurs transition.}
    \label{fig:translation lavaurs}
  \end{figure}

    \begin{lem}\label{lem_orbite_neg} Let $\ep<0$. 
 \begin{enumerate} 
    \item There exists a global map called the \emph{Lavaurs transition} 
       that maps an orbit of $f_\ep$ in $\Ss^-$
      to the same orbit in $\Ss^+$: see Figure~\ref{fig:translation lavaurs}. It is a linear map $\ell^L$ on the
      spheres, and a translation $T^L$ in the Fatou coordinates.
      When the Fatou coordinates are normalized, then \begin{equation}T^L=T_{-\frac{i\pi}{\sqrt{\eps}}}.\label{value_Lavaurs}\end{equation}
    \item The linear map $L_{-1}\circ \tau\colon \Ss^\pm\to \Ss^\pm$;
      $w\mapsto -{1\over \ol w}$
      is the action of $f_\ep$ on $\Ss^\pm$ 
      (see Figure~\ref{fig:orbite ep < 0}); it 
      maps an orbit of $f_\ep$ to the same orbit and the quotient
      $\Pp_\pm:= \Ss^\pm/ L_{-1}\circ \tau$ is a real projective plane;
    \item The maps $\kappa^{0,\infty}= \ell^L \circ \psi_\ep^{0,\infty}$ 
      are \emph{first return
      maps} of $g_\ep$ around $-\sqrt\ep$ and $+\sqrt\ep$ respectively and
      they are compatible with the orbits of $f_\ep$, i.e.~%
      $(L_{-1} \circ \tau) \circ (\ell^L \circ \psi_\ep^0)
        = (\ell^L \circ \psi_\ep^{\infty}) \circ (L_{-1} \circ \tau)$.
    \item In the time
      coordinate, the first return map of $\sqrt\ep$ (resp.~$-\sqrt\ep$)
      is $T_{-\alpha_\ep^+}$ (resp.~$T_{\alpha_\ep^-}$) as seen on 
      Figure~\ref{fig:retour}.
\item The first return maps can be written in the Fatou coordinate
  by
  \begin{align}\label{eq:premier retour inf}
    T^L \circ \Psi_\ep^\infty &= \Phi_\ep^+ \circ T_{-\alpha_\ep^+}
      \circ (\Phi_\ep^+)\inv,\\[2\jot]\label{eq:premier retour 0}
    T^L \circ \Psi_\ep^0 &= \Phi_\ep^+ \circ T_{\alpha_\ep^-}
      \circ (\Phi_\ep^+)\inv.
  \end{align}
  \end{enumerate} 
\end{lem}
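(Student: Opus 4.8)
The strategy is to work in the time coordinate throughout and to exploit the fact that everything descends from the well-understood holomorphic picture for $g_\ep$. The key observation is that on the Lavaurs domain ($\ep<0$) the two charts $Z_\ep^\pm$, hence the two strips $B_{\ell_\pm}$, live on the same Riemann surface of the time coordinate, which is tiled by the covering transformations $T_{\alpha_\ep^\pm}$ of~\eqref{eq:periode}. For point~4 I would argue directly on Figure~\ref{fig:retour}: starting from a point of $B_{\ell_+}$ near the fundamental hole around $+\sqrt\ep$, following the dynamics of $g_\ep$ (i.e.~iterating $G_\ep\approx T_1$) around that hole once brings us back to the same fiber shifted by $-\alpha_\ep^+$, because $T_{\alpha_\ep^+}$ is precisely the deck transformation encircling $+\sqrt\ep$; similarly around $-\sqrt\ep$ the first return is $T_{\alpha_\ep^-}$. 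Then point~5 is immediate: conjugating $T_{\mp\alpha_\ep^\pm}$ by the Fatou coordinate $\Phi_\ep^+$ and unwinding the definition~\eqref{def_psi} of $\Psi_\ep^{0,\infty}$ together with $T_{\pm i\pi b}=Z_\ep^\mp\circ(Z_\ep^\pm)\inv$ gives~\eqref{eq:premier retour inf} and~\eqref{eq:premier retour 0}. This also forces the value of $T^L$: the left-hand sides of~\eqref{eq:premier retour inf}--\eqref{eq:premier retour 0} are translations since the right-hand sides are, and comparing constant terms using $\alpha_\ep^+=\tfrac{i\pi}{\sqrt\ep}+i\pi b$ and the normalization $c_0^\infty=-i\pi b$ yields $T^L=T_{-i\pi/\sqrt\ep}$, establishing~\eqref{value_Lavaurs} and most of point~1.

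For the geometric content of point~1 (that there is a well-defined global map on the spheres $\Ss^\pm$ realizing "same orbit of $f_\ep$, from $\Ss^-$ to $\Ss^+$"), I would complete each strip $B_{\ell_\pm}$ to a sphere by identifying its two sides via $G_\ep$ (equivalently via $T_1$ in Fatou coordinates, which fills the ends to $0$ and $\infty$), as was done for the \'Ecalle cylinder in the proof of Theorem~\ref{theo:coord fatou}. The Lavaurs transition is then the map that takes a point of $\Ss^-$, interprets it as an orbit of $g_\ep$, follows that orbit through $D(0,r)\setminus\{\pm\sqrt\ep\}$ until it meets $B_{\ell_+}$, and records the resulting point of $\Ss^+$; in Fatou coordinates this is the translation $T^L$ just identified, and since a translation commuting with $T_1$ descends to a map $\Cp^\ast\to\Cp^\ast$ fixing $0$ and $\infty$, on the spheres it is a linear map $\ell^L$. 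That it sends an orbit of $f_\ep$ to the \emph{same} orbit (not merely an orbit of $g_\ep$) follows because $T^L$ is built from the $Z_\ep^\pm$, which by point~3 of Proposition~\ref{prop:time} intertwine correctly with $\Sigma$.

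Point~2 is a direct computation: the action of $f_\ep$ on the orbit space of $g_\ep$ was identified in the Preliminaries as $L_{-1}\circ\tau$, and one checks it is an antiholomorphic involution of $\Ss^\pm$ with no fixed points in $\Cp^\ast$, swapping $0\leftrightarrow\infty$ (matching $f_\ep(\pm\sqrt\ep)=\mp\sqrt\ep$), so the quotient is $\R\Pp^2$. For point~3, $\kappa^{0,\infty}=\ell^L\circ\psi_\ep^{0,\infty}$ is by construction the composite "go from a neighbourhood of $0$ (resp.~$\infty$) in $\Ss^+$ across to $\Ss^-$ via $\psi_\ep^{0,\infty}$, then back via the Lavaurs transition"—i.e.~exactly the first return of $g_\ep$ around $-\sqrt\ep$ (resp.~$+\sqrt\ep$); compatibility with the orbits of $f_\ep$, namely $(L_{-1}\circ\tau)\circ(\ell^L\circ\psi_\ep^0)=(\ell^L\circ\psi_\ep^\infty)\circ(L_{-1}\circ\tau)$, then follows by combining~\eqref{eq:PsiInf commute} with the fact that $\ell^L$ commutes with the $\Sigma$-action, which in turn comes from point~3 of Proposition~\ref{prop:time}.

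The main obstacle I anticipate is point~4: making rigorous the claim that "following $g_\ep$ once around the hole at $\pm\sqrt\ep$ equals the deck transformation $T_{\mp\alpha_\ep^\pm}$." The picture in Figure~\ref{fig:retour} is convincing, but a careful argument must track how the translation domain $U_\ep^\pm$ sits on the ramified time surface, verify that the relevant connected component of the overlap really does close up after exactly one period, and handle the estimate $|G_\ep-T_1|\le C\max\{r,r'\}$ to ensure the return map is genuinely $T_{\mp\alpha_\ep^\pm}$ and not $T_{\mp\alpha_\ep^\pm}$ composed with some bounded extra translation. Once point~4 is pinned down, points~1--3 and~5 are essentially bookkeeping with the commutation relations already available.
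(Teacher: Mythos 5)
There is a genuine gap, and it is not where you anticipate. Your derivation of the value of $T^L$ rests on the claim that the right-hand sides of \eqref{eq:premier retour inf}--\eqref{eq:premier retour 0}, i.e.\ $\Phi_\ep^+\circ T_{\mp\alpha_\ep^\pm}\circ(\Phi_\ep^+)\inv$, are translations. This is false for $\ep<0$: Fatou coordinates commute with the periods $T_{\alpha_\ep^\pm}$ only on Glutsyuk translation domains (point~3 of Theorem~\ref{theo:coord fatou} is stated for $\ep>0$), not on Lavaurs domains. Indeed, point~3 of the very lemma you are proving identifies these first return maps with $\ell^L\circ\psi_\ep^{0,\infty}$, and the horn maps $\psi_\ep^{0,\infty}$ are generically nonlinear --- if the first return maps were translations, $\Psi_\ep^{0,\infty}$ would be translations and the modulus trivial. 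Consequently your argument that $T^L$ is a translation, and the constant-term computation giving \eqref{value_Lavaurs}, collapse; and your later treatment of point~1 is circular, since it invokes ``the translation $T^L$ just identified.''

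The missing idea is the one the paper uses to get the translation property \emph{first}: the Lavaurs transition is defined globally, orbit by orbit, from the croissant $\mathcal{C}^-$ (sphere $\Ss^-$) to the croissant $\mathcal{C}^+$ (sphere $\Ss^+$), by sending each point to the first point of its forward $g_\ep$-orbit in $\mathcal{C}^+$; being a holomorphic diffeomorphism between two spheres fixing $0$ and $\infty$ (the fixed points $\mp\sqrt\ep$), it must be linear, hence its lift to the Fatou coordinates is a translation $T^L$. Only then does one combine \eqref{def_psi}, \eqref{eq:premier retour inf}, the formula $\alpha_\ep^+=\frac{i\pi}{\sqrt\ep}+i\pi b$ and the normalization $c_0^\infty=-i\pi b$ to conclude $T^L=T_{-i\pi/\sqrt\ep}$. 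The rest of your plan is sound and matches the paper: point~4 via the deck transformations of the time surface (this is the standard argument from the holomorphic case and is not the real obstacle), point~5 by conjugation and the definition of $\Psi_\ep^{0,\infty}$, compatibility with $f_\ep$ via $T_{-\alpha_\ep^+}\circ F_\ep=F_\ep\circ T_{-\ol{\alpha_\ep^+}}$ and $-\ol{\alpha_\ep^+}=\alpha_\ep^-$, and point~2 giving the quotient $\R\Pp^2$.
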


\begin{figure}[htbp]
  \begin{center}  
  \includegraphics[width= 4cm]{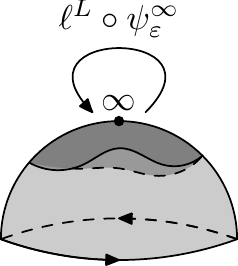}
  \caption{The space of orbits of $f_\ep$ for $\ep < 0$.}
  \label{fig:orbite ep < 0}
  \end{center}
\end{figure}

\begin{rem}\label{rem:cinf 2ipi}
  Comparing constant terms from Equations~\eqref{eq:premier retour inf}
  and~\eqref{eq:premier retour 0}, we find the well known relation
  $c_0^\infty - c_0^0 = -2i\pi b$ for $\ep < 0$. It holds for any transition functions,
  not necessarily normalized.
\end{rem}

The following theorem is a direct consequence of Lemma~\ref{lem_orbite_neg}.

 \begin{theo}\label{orbit-space_negative} The space of orbits $\joli O_\ep$ of $f_\ep$ is the quotient of a real projective plane by a diffeomorphism in the neighborhood of one point.
This diffeomorphism corresponds to a first return map in the neighborhood of the periodic points. See Figure~\ref{fig:orbite ep < 0}. 
\end{theo}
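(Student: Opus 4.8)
\textbf{Proof proposal for Theorem~\ref{orbit-space_negative}.}

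The plan is to assemble the geometric picture directly from the five points of Lemma~\ref{lem_orbite_neg}, treating the orbit space as built in two stages: first the orbit space of $g_\ep = f_\ep \circ f_\ep$, then the quotient by the antiholomorphic action that descends $g_\ep$ to $f_\ep$. The starting point is the covering of $D(0,r)\setminus\{\pm\sqrt\ep\}$ by the two sectorial domains $S_\ep^\pm = Z_\ep^{-1}(U_\ep^\pm)$, whose completions are the two spheres $\Ss^\pm$. Every orbit of $f_\ep$ meets at least one of $\Ss^+$, $\Ss^-$, and the redundancy is exactly governed by the transition functions $\Psi_\ep^{0,\infty}$ near $0$ and $\infty$ together with the Lavaurs transition $\ell^L$. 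Point~1 of the lemma says $\ell^L$ is a global linear isomorphism $\Ss^-\to\Ss^+$ identifying each orbit of $f_\ep$ in $\Ss^-$ with the same orbit in $\Ss^+$; composing with $\psi_\ep^{0,\infty}$ yields, by point~3, the first return maps $\kappa^{0,\infty}$ of $g_\ep$ at $\mp\sqrt\ep$. So after identifying $\Ss^-$ with $\Ss^+$ via $\ell^L$ we are left with a single sphere $\Ss:=\Ss^+$ on which the remaining identification is $w\sim\kappa^0(w)$ near $0$ and (equivalently, by the compatibility relation in point~3) $w\sim\kappa^\infty(w)$ near $\infty$; this is precisely the Écalle–Voronin description of the orbit space of the holomorphic parabolic germ $g_\ep$, namely a sphere with two local gluings near the two punctures, matching the holomorphic case recalled in the preliminaries.

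Next I would pass to $f_\ep$. By point~2 of the lemma the involution $L_{-1}\circ\tau\colon w\mapsto -1/\wbar$ on $\Ss^\pm$ is the action induced by $f_\ep$: it is a fixed-point-free antiholomorphic involution, so the quotient $\Pp:=\Ss/(L_{-1}\circ\tau)$ is a real projective plane, and under it the two punctures $0,\infty$ — which are swapped by $L_{-1}\circ\tau$ since $-1/\bar 0 = \infty$ — become a single point. The compatibility statement in point~3, $(L_{-1}\circ\tau)\circ\kappa^0 = \kappa^\infty\circ(L_{-1}\circ\tau)$, together with the fact that $\ell^L$ and $\psi_\ep^{0,\infty}$ intertwine with $L_{-1}\circ\tau$ (this is exactly Equation~\eqref{eq:PsiInf commute} transported to the spheres), guarantees that the gluing $w\sim\kappa^0(w)$ near $0$ descends to a well-defined germ of diffeomorphism $\bar\kappa$ defined in a punctured neighbourhood of the single image point $[0]=[\infty]\in\Pp$. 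Therefore $\joli O_\ep$ is obtained from $\Pp$ by identifying, in a neighbourhood of that one point, each $\xi$ with $\bar\kappa(\xi)$; and $\bar\kappa$ is literally the first return map of $f_\ep$ near the periodic orbit $\{\pm\sqrt\ep\}$, which by point~4 is the translation $T_{-\alpha_\ep^+}$ (equivalently $T_{\alpha_\ep^-}$ on the other sheet) read in the Fatou coordinate, expressed through $\Psi_\ep^\infty$ and $T^L$ via Equations~\eqref{eq:premier retour inf} and~\eqref{eq:premier retour 0}. Filling the puncture back in by the fixed point $P_\ep$, with $K$-style neighbourhood basis as in the $\ep>0$ theorem, completes the description and gives the stated form: the quotient of $\Pp^2$ by a diffeomorphism defined near one point, that diffeomorphism being a first return map near the periodic points.

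The routine parts — convergence of the Fatou coordinates, that $\ell^L$ and $\psi^{0,\infty}$ are honest germs of the stated regularity, the shape $T_{-i\pi/\sqrt\ep}$ of $T^L$ — are all already furnished by Lemma~\ref{lem_orbite_neg} and Proposition~\ref{prop:prop analytiques}, so the argument above is essentially a bookkeeping exercise in pushing identifications through a quotient. The one point that needs genuine care, and which I expect to be the main obstacle, is the well-definedness of $\bar\kappa$ on $\Pp$ near the glued point: one must check that the two descriptions of the gluing — via $\kappa^0$ at $0$ and via $\kappa^\infty$ at $\infty$ — are exchanged consistently by $L_{-1}\circ\tau$ with no monodromy mismatch, i.e.~that the fundamental-domain choices used to build $\Pp$ and the ones used to build the $g_\ep$ orbit space are compatible. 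This is exactly where the intertwining identity of point~3 of Lemma~\ref{lem_orbite_neg} and the relation~\eqref{eq:PsiInf commute} (and, in the hole, the period relation $\Sigma\circ T_{\alpha_\ep^\pm} = T_{\ol{\alpha_\ep^\pm}}\circ\Sigma$) must be invoked carefully; once that single compatibility is nailed down, the theorem follows immediately.
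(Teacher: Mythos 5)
Your proposal is correct and follows essentially the same route as the paper: the paper states the theorem as a direct consequence of Lemma~\ref{lem_orbite_neg}, whose proof builds exactly your picture — the orbit space of $g_\ep$ as the sphere(s) glued by the Lavaurs transition and the return maps $\kappa^{0,\infty}$ near $0$ and $\infty$, then the quotient by the induced fixed-point-free involution $L_{-1}\circ\tau$ giving a real projective plane with a single identification near the image of $0\sim\infty$ by (the descent of) the first return map. Your extra care about the well-definedness of the descended germ $\bar\kappa$ is precisely the compatibility the paper invokes implicitly via point~3 of the lemma and~\eqref{eq:PsiInf commute}, so there is no gap.
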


\begin{proof}[Proof of Lemma~\ref{lem_orbite_neg}]
  The space of orbits of $g_\ep$ can be described with a
  croissant, as in Figure~\ref{fig:croissant}, and the return maps around
  each fixed point (see \cite{germeDeploie}). If we quotient by $f_\ep$, we obtain the
  space of orbits of $f_\ep$. 

  On a Fatou coordinate $W = \Phi_\ep^+(Z)$ of $f_\ep$, 
  we take a strip of width~1 to the left of the fundamental hole. 
  With the universal covering $E: W\mapsto \exp(-2i\pi W)$,
  we see that this strip is isomorphic to $\Cp^\ast$. We
  identify $\sqrt\ep$ to $\infty$ and $-\sqrt\ep$ to $0$,
  so that we now have a sphere. Each point represents an
  orbit of $g_\ep$. On the sphere, $f_\ep$ becomes $L_{-1}\circ \tau$.

  \begin{figure}[htbp]
    \centering
    \subcaptionbox{First return map on the croissant\label{fig:croissant}}
      [.35\textwidth]
      {\includegraphics[scale=.75]{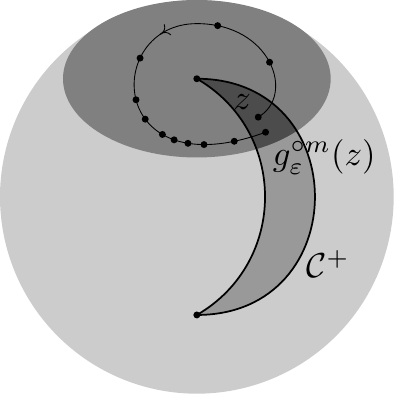}}\hfil\hfil
    \subcaptionbox{First return map in the time coordinate,
     with $B=Z_\ep^+(\mathcal{C}^+)$\label{fig:retour}}
      {\includegraphics[scale=.75]{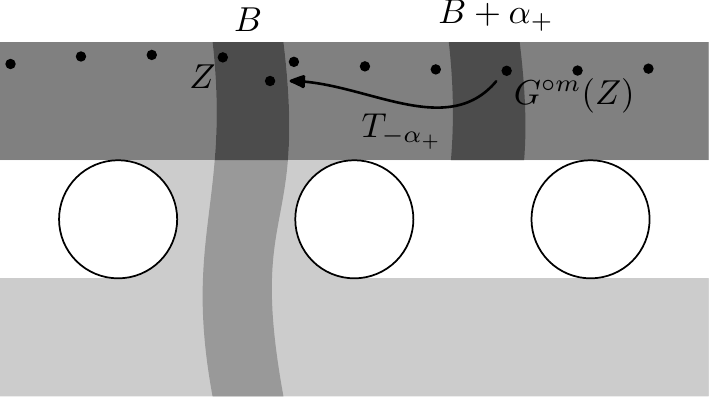}}
    \caption{Croissant $\mathcal{C}^+$ and first return map.}
  \end{figure}

  In the $z$-coordinate, the sphere
  can be seen as a croissant $\mathcal{C}^+$ going from $-\sqrt\ep$ to
  $\sqrt\ep$, see Figure~\ref{fig:croissant}.
  We see that every orbit of $f_\ep$
  intersects the croissant. In the neighborhood of each fixed point $\pm\sqrt\ep$, 
  we can define a first return map $p^\pm$ in the croissant
  for the orbits of $g_\ep$. In the time coordinate,
  the first return maps are $T_{-\alpha_\ep^+}$ and $T_{\alpha_\ep^-}$
  (see Figure~\ref{fig:retour}),
  so we see that they are compatible with the orbits of $f_\ep$
  since $T_{-\alpha_\ep^+} \circ F_\ep = F_\ep \circ T_{-\ol{\alpha_\ep^+}}$
  and $-\ol{\alpha_\ep^+} = \alpha_\ep^-$.

  We consider a strip of width~1 \relax 
  in the Fatou coordinate of $\Phi_\ep^-$ to the right of the fundamental hole, yielding a second croissant $\mathcal{C}^-$ in the $z$-coordinate.
  We define a diffeomorphism that maps each point
  of the croissant $\mathcal{C}^-$ to the first point of its forward orbit in
  the croissant $\mathcal{C}^+$, see Figure~\ref{fig:translation lavaurs}. 
  This map defines
  a global diffeomorphism from a sphere to another sphere,
  so it is a linear map $\ell^L$. In the Fatou coordinates, it is
  a translation; we call it the \emph{Lavaurs translation}
  $T^L$. Combining~\eqref{def_psi} and~\eqref{eq:premier retour inf}
  and using the definition of $\alpha_\ep^+$~\eqref{eq:periode},
  we obtain $T^L = \Phi^-_\ep \circ T_{-{i\pi\over\sqrt \ep}} \circ (\Phi_\ep^+)\inv$.
  Then because $T^L$ is a translation and $\Psi^\infty_\ep$ is
  normalized~\eqref{c_normalized_1}, we obtain $T^L =T_{-\frac{i\pi}{\sqrt{\eps}}}$.
  
  Let $\psi_\ep^{0,\infty}$ be the maps induced on the spheres
  by $\Psi_\ep^{0,\infty}$.
  The space of orbits of $g_\ep$ is obtained by identifying
  $w\in \Ss^-$ with $\ell^L(w)\in \Ss^+$, and $w\in \Ss^+$ with $\psi_\ep^{0,\infty}(w)\in\Ss^-$.
  Since $\ell^L$ and $\psi_\ep^{0,\infty}$ are compatible with
  the orbits of $f_\ep$, $f_\ep$ induces the global 
  diffeomorphism $L_{-1}\circ\tau$ on the space of orbits of $g_\ep$.
  We identify $w$ with $L_{-1}\circ \tau(w)$ to obtain
  the space of orbits of $f_\ep$: this yields a real projective plane
  with identification of $w$ and $\psi_\ep^\infty(w)$.
 \end{proof}

\section{Weak Classification}\label{sec:weak class}
We have all the tools to define the \emph{weak modulus of 
classification}. 

The goal is to prove the strong equivalence
(Definition~\ref{def:eq forte}) of two
families with the same weak modulus. With the 
tools we have so far, we can only prove a weaker
form of equivalence (see Definition~\ref{def:weak_equivalence} below). Indeed, Fatou coordinates,
instrumental in the construction of the equivalence,
are not analytic at $\ep=0$. 

In Section~\ref{sec:weak class theo}, we will give a simple
proof of weak equivalence and we will prove
strong equivalence in Section~\ref{sec:strong class}.

\subsection{Weak modulus of Classification}
The space of orbits of $f_\ep$ can be described
by the codimension, the formal invariant and
one transition function. These are the three parts
of the weak modulus of classification. 

\begin{deff}\label{def:weak_equivalence}
  Let $f_\eta$ be a generic unfolding of a
  parabolic antiholomorphic germ of codimension~1.
  Its \emph{weak modulus of classification} is the
  triple $(\ep, b, [\Psi_\ep])$, where $\ep$
  is the canonical parameter of $f_\eta$, $b$
  is the formal invariant (a real analytic function of $\ep$) and $[\Psi_\ep]$
  is an equivalence class of normalized families of transition functions
  under the relation $\sim$
  $$
    {\{\Psi_\ep\}}_\ep\sim {\{\Psi_\ep'\}}_\ep \iff
    \begin{aligned}
      &\exists C(\ep)\in\R\text{ with $C$ real analytic for}\cr
      &\text{$\ep\not=0$ and continuous such that} \cr
      &\Psi_\ep' = T_{C(\ep)} \circ \Psi_\ep \circ T_{-C(\ep)}.
    \end{aligned}
  $$ 
\end{deff}

\subsection{Weak Classification Theorem}
\label{sec:weak class theo}

%\begin{deff}[Weak Equivalence] Let $f_{1,\ep}$
%  and $f_{2,\eta}$ be two unfoldings of 
%  antiholomorphic parabolic germs of codimension~1.
%  They are \emph{weakly equivalent} if there
%  exist $r,r'>0$ and a diffemorphism
%  \begin{align*}
%    H\colon (-r',r')\times D(0,r) &\to \R\times \Cp\\
%      (\ep,z) &\mapsto (\beta(\ep), h_\ep(z))
%  \end{align*}
%  such that

\begin{deff}[Weak Equivalence]
  Let $f_{1,\eta}$ and $f_{2,\ep}$ be two generic unfoldings
  of antiholomorphic parabolic germs of codimension $1$. We say they are
  \emph{weakly equivalent} if there exists an open interval
  $I\ni 0$, a disc $D(0,r)$, $r>0$, and a mix analytic
  diffeomorphism $H\colon I\times D(0,r) \to \R\times \Cp$
  such that
  \begin{enumerate}
    \item $H(0,0) = (0,0)$;
    \item $H(\eta,z) = (\beta(\eta), h_\eta(z))$ with 
      $\beta$ real analytic and $h_\eta$ continuous in
      $(\eta,z)$ and real analytic for $\eta\not=0$;
    \item $f_{2,\beta(\eta)} = h_\eta \circ f_{1,\eta} \circ h_{\eta}\inv$.
  \end{enumerate}
\end{deff}

\begin{theo}[Weak Classification]\label{theo:weak eq}
  Two generic unfoldings of antiholomorphic parabolic germs 
  of codimension~1 are weakly equivalent if and 
  only if they have the same weak modulus of classification.
\end{theo}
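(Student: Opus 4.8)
The plan is to prove both directions of the equivalence, the forward direction being essentially formal and the converse being the substantive construction.

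\textbf{Necessity.} Suppose $f_{1,\eta}$ and $f_{2,\ep}$ are weakly equivalent via $H(\eta,z)=(\beta(\eta),h_\eta(z))$. First I would check that $\beta$ must be the identity on canonical parameters, i.e.\ that $\ep_2(\beta(\eta))=\ep_1(\eta)$: this follows from Theorem~\ref{theo:ep canonique}, since $\ep$ and $b$ are defined purely in terms of the multipliers of $g_\eta=f_\eta\circ f_\eta$ at its fixed points, and these multipliers are invariant under the conjugacy $h_\eta$. Hence $\ep_1=\ep_2=:\ep$ and $b_1=b_2=:b$ as functions of $\ep$, giving the equality of the first two components of the weak modulus. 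For the third component, the conjugacy $h_\eta$ lifts to the time coordinate and carries a pair of Fatou coordinates $(\Phi_{1,\ep}^+,\Phi_{1,\ep}^-)$ of $f_{1,\ep}$ to Fatou coordinates of $f_{2,\ep}$; by the uniqueness part of Theorem~\ref{theo:coord fatou} these differ from the chosen normalized ones for $f_{2,\ep}$ by real translations $T_{R_\pm(\ep)}$. Feeding this into the definition~\eqref{def_psi} of $\Psi_\ep$ shows $\Psi_{2,\ep}=T_{C(\ep)}\circ\Psi_{1,\ep}\circ T_{-C(\ep)}$ for $C=R_--R_+$ (possibly up to the ambiguity already built into $\sim$), and the normalization condition~\eqref{c_normalized_1}--\eqref{c_normalized_2} forces $C$ to be real; continuity and real analyticity of $C$ for $\ep\neq0$ come from the dependence statement of Theorem~\ref{theo:coord fatou}. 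Thus $[\Psi_{1,\ep}]=[\Psi_{2,\ep}]$.

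\textbf{Sufficiency.} Conversely, assume the two families share the weak modulus; after the preparation of Theorems~\ref{theo:ep canonique} and~\ref{theo:forme prep} we may take both in prepared form depending on the same canonical parameter $\ep$, with the same $b(\ep)$, and with normalized transition functions that are conjugate by a real $T_{C(\ep)}$; absorbing $T_{C(\ep)}$ into one of the Fatou coordinates we may simply assume $\Psi_{1,\ep}=\Psi_{2,\ep}$ for all $\ep$. The construction of the conjugacy then proceeds separately for $\ep\neq0$ and glues at $\ep=0$. For fixed $\ep\neq0$: the identity $\Phi_{2,\ep}^\pm\circ h_\ep=\Phi_{1,\ep}^\pm$ on the overlap of the two translation domains is forced to agree where both are defined precisely because the transition functions (and, for $\ep>0$, the periods $\alpha_\ep^\pm$, which depend only on $\ep$ and $b$) coincide; hence the formula $h_\ep:=(\Phi_{2,\ep}^\pm)^{-1}\circ\Phi_{1,\ep}^\pm$ on each chart patches to a single diffeomorphism of $D(0,r)\setminus\{\pm\sqrt\ep\}$ in the $z$-coordinate, which extends across the punctures since it conjugates $f_{1,\ep}$ to $f_{2,\ep}$ and both have the same (linearizable) local models there. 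That $h_\ep$ is mix analytic for $\ep\neq0$ follows from the mix analyticity of the normalized Fatou coordinates. At $\ep=0$ one uses the continuity of the families $\{\Phi_{j,\ep}^\pm\}_\ep$ on $Q^\pm$ (Theorem~\ref{theo:coord fatou}, part 4) to see that $h_\ep\to h_0$ continuously, where $h_0$ is the conjugacy for the germ $f_0$ supplied by the classification of~\cite{GR}; together with $h_0(0)=0$ this gives a map $H(\eta,z)=(\eta,h_\eta(z))$ that is mix analytic for $\eta\neq0$ and continuous at $\eta=0$, i.e.\ a weak equivalence.

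\textbf{Main obstacle.} The delicate point is the verification that the locally defined pieces $(\Phi_{2,\ep}^\pm)^{-1}\circ\Phi_{1,\ep}^\pm$ genuinely agree on all overlaps and therefore define a \emph{single-valued} diffeomorphism on $D(0,r)\setminus\{\pm\sqrt\ep\}$ — equivalently, that no monodromy obstruction survives. This is exactly where equality of the full weak modulus (not merely of $\ep$ and $b$) is used: one must check that the two charts glue not only across the fundamental hole (handled by $\Psi_{1,\ep}=\Psi_{2,\ep}$) but consistently around all the translated copies, which for $\ep>0$ requires the commutation of the Fatou coordinates with $T_{\alpha_\ep^\pm}$ and the relation $\Psi_\ep^L=T_{\alpha_\ep^-}\circ\Psi_\ep^G\circ T_{\alpha_\ep^+}$, and for $\ep<0$ requires compatibility with the first-return/Lavaurs data of Lemma~\ref{lem_orbite_neg}. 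Once single-valuedness is secured, extension across the punctures and the limit $\ep\to0$ are comparatively routine, the latter being the reason the theorem yields only \emph{weak} equivalence: real analyticity of $h_\eta$ at $\eta=0$ is not available because the Fatou coordinates themselves degenerate there, and recovering it is precisely the task deferred to Section~\ref{sec:strong class}.
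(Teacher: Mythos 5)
Your proposal follows essentially the same route as the paper: reduce to prepared form with equal canonical parameter and formal invariant, choose Fatou coordinates so the normalized transition functions coincide, define $h_\ep$ chartwise through the time and Fatou coordinates, check agreement on the overlaps via the transition data (and, for $\ep<0$, the Lavaurs translation, which is fixed by the normalization), extend across $\pm\sqrt\ep$, and read off continuity at $\ep=0$ and real analyticity for $\ep\neq0$ from the properties of the normalized families; the necessity direction, which the paper dismisses as obvious, is argued the same way one would expect. One correction: the extension of $h_\ep$ across the punctures should be justified by boundedness and the removable singularity theorem, as in the paper, not by ``the same linearizable local models'' --- for $\ep<0$ the points $\pm\sqrt\ep$ are indifferent fixed points of $g_\ep$ (multipliers on the unit circle) and are in general not linearizable, so that justification would fail on half the parameter range, although the conclusion itself is unaffected.
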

\begin{proof}
  One direction is obvious. Conversely, let
  $f_{1,\ep}$ and $f_{2,\ep}$ be the two families that, without loss of generality, we can suppose 
 in prepared form, and let $\ep$ be their canonical parameter. Moreover, let us
  suppose that the two families have the same weak modulus
  $(\ep,b,[\Psi_\ep])$.
 Then $\beta\equiv {\rm id}$. 
   
  We use Fatou coordinates $\Phi_{j,\ep}^\pm$ of $f_{j,\ep}$
  to construct the change of coordinate $h_\ep$
  that conjugates $f_{1,\ep}$ to $f_{2,\ep}$. 
  Since $f_j$ have the same weak modulus, we can choose 
  $\Phi_j^\pm$ so that 
$$
    \Phi_{1,\ep}^- \circ T_{-i\pi b}\circ (\Phi_{1,\ep}^+)\inv
    = \Psi_\ep
    = \Phi_{2,\ep}^- \circ T_{-i\pi b}\circ (\Phi_{2,\ep}^+)\inv.  $$

  We can divide $D(0,r)\setminus\{\pm\sqrt\ep\}$ in
  two regions $S_\ep^\pm$ as in Figures~\ref{fig:R pm ep < 0} 
  and~\ref{fig:R pm ep > 0}. For $z\in S_\ep^\pm$,
  we define $h_\ep^\pm$ by
  \begin{equation}\label{eq:def h ep}
    h_\ep^\pm(z) = 
      (Z^\pm_{\ep})\inv \circ (\Phi_{1,\ep}^\pm)\inv 
        \circ \Phi_{2,\ep}^\pm \circ Z^\pm_{\ep}(z).
  \end{equation}
  A direct computation shows that 
  $(h_\ep^\pm)\inv \circ f_{1,\ep} \circ h_\ep^\pm = f_{2,\ep}$.
  So it only remains to prove that 
  $h_\ep^+ = h_\ep^-$ on $S_\ep^+\cap S_\ep^-$ in order to define
  $h_\ep$ on a $D(0,r)$. 

  \begin{figure}[htbp]
  \centering
    \subcaptionbox{${S}_\ep^-$}
      {\includegraphics{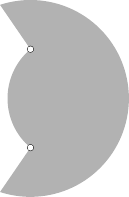}}\hfil
    \subcaptionbox{${S}_\ep^+$}
      {\includegraphics{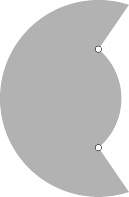}}\hfil
    \subcaptionbox{${S}_\ep^-\cup {S}_\ep^+$}
      {\includegraphics{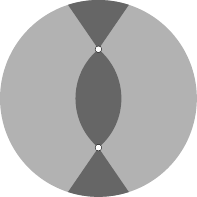}}
    \caption{Division of $D(0,r)$ for $\ep < 0$.}
    \label{fig:R pm ep < 0}
    \vskip2em
    \subcaptionbox{${S}_\ep^-$}
      {\includegraphics{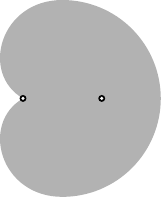}}\hfil
    \subcaptionbox{${S}_\ep^+$}
      {\includegraphics{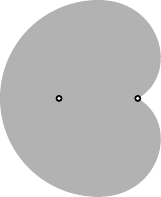}}\hfil
    \subcaptionbox{${S}_\ep^-\cup {S}_\ep^+$}
      {\includegraphics{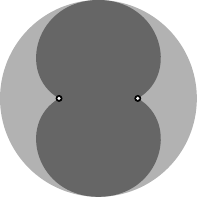}}
    \caption{Division of $D(0,r)$ for $\ep > 0$.}
    \label{fig:R pm ep > 0}
  \end{figure}

  For $\ep > 0$, we have, using \eqref{eq:coordonnee temps_neg} and \eqref{def_psi}:
  \begin{equation}\label{eq:calcul h pm}
  \begin{aligned}
		h_\ep^-\circ (h_\ep^+)\inv
		  &= \Big((Z_\ep^-)\inv\circ (\Phi_{2,\ep}^-)\inv\circ \Phi_{1,\ep}^-
			  \circ Z_\ep^-\Big) \\
				&\qquad\quad \circ \Big( (Z_\ep^+)\inv
			  \circ (\Phi_{1,\ep}^+)\inv\circ \Phi_{2,\ep}^+\circ Z_\ep^+\Big)\\[2\jot]
		  &= (Z_\ep^-)\inv \circ (\Phi_{2,\ep}^-)\inv\circ \Psi^G_\ep\circ \Phi_{2,\ep}^+
			  \circ Z_\ep^+\\[2\jot]
		  &= (Z_\ep^-)\inv \circ T_{-i\pi b}
			  \circ Z_\ep^+\\[2\jot]
			&= id.
  \end{aligned}
  \end{equation}

  For $\ep<0$, there are three cases. The intersection
  ${S}_\ep^+\cap {S}_\ep^-$ has three components: $I^+$ above 
  $\sqrt\ep$, $I^-$ below $-\sqrt\ep$ and $I^L$ between
  $\sqrt\ep$ and $-\sqrt\ep$ (see Figure~\ref{fig:R pm ep < 0}). For $I^\pm$, the computations
  are the same as in~\eqref{eq:calcul h pm}, but with
  $\Psi^{0,\infty}$. For $I^L$, the computation is
  similar, but we use the fact that $f_1$ and $f_2$
  have the same Lavaurs translation (this is true as soon
  as $f_1$ and $f_2$ have the same formal invariant, see \eqref{value_Lavaurs}).

  We define
  $$
    h_\ep(z) = \begin{cases}
      h_\ep^+(z), & \text{if $z\in R_\ep^+$;}\\
      h_\ep^-(z), & \text{if $z\in R_\ep^-$;}
    \end{cases}
  $$
  then $h_\ep$ is well-defined on $D(0,r)\setminus\{\pm\sqrt\ep\}$
  by the above and we can analytically continue 
  $h_\ep$ on $\pm\sqrt\ep$, since it bounded.
\end{proof}

\section{Strong Classification}
\label{sec:strong class}
The Weak Equivalence Theorem~\ref{theo:weak eq} has
a simple and direct proof. The main result of this
section is that two families with same weak modulus of classification are strongly equivalent (see Definition~\ref{def:eq forte}). The proof
is more involved and will necessitate to introduce the notion of \emph{strong modulus of classification}.

Let $f_\ep$ be an unfolding of an antiholomorphic parabolic
germ in prepared form. In the proof of Theorem~\ref{theo:weak eq},
we constructed a weak equivalence $h_\ep$ using normalized Fatou coordinates
of $f_\ep$. The idea is to complexify $\ep$ and to continue analytically the Fatou coordinates in a normalized way: they will be ramified in $\ep$. 
The analytic extension of $h_\ep$ will be shown to be holomorphic in $\ep$ around $\ep=0$ in the nontrivial case when the transition maps are generically not translations, while in the trivial case, we will force $h_\ep$ to be  holomorphic using \emph{strongly normalized Fatou coordinates} (defined below).

\subsection{Complex Parameter}
Recall that we work with a representative of the
germ $f_\ep$ defined on $(-r',r')\times D(0,r)$.
We complexify $\ep$ and we continue analytically $f_\ep$
so that it is antiholomorphic in $\ep$. Now 
$$g_\ep = f_\epbar \circ f_\ep$$ is a full holomorphic unfolding
of $g_0$. 

Since $b$ is real-valued for real values of $\ep$, it
commutes with $\sigma$. It follows that $\ol{v_\ep(z)} = v_\epbar(\zbar)$,
for the vector field $v_\ep(z) = {z^2 - \ep \over 1 + b(\ep)z}$,
and therefore the time-$t$ satisfies 
$\sigma \circ v_\ep^t = v_\epbar^{\ol t}\circ \sigma$.

The time coordinate is also defined for complex
values of $\ep$. However, the line of the holes, given
by the period $\alpha_\ep^\pm$, rotates when $\ep$
varies around $0$. See Figure~\ref{fig:ep rotation}. 
Using the same definitions as in Proposition~\ref{prop:time} we see that the two charts in time $Z_\ep^\pm$ cannot be defined uniformly in time. 
Therefore, we lift
$\ep$ on the universal covering of $D(0,r')^\ast$
and we work on a sector of the form
(see Figure~\ref{fig:secteur ep})
\begin{equation}
  \Opi := \{\hatep = \rho e^{i\theta}\mid 0 < \rho <r',\ 
    -\pi + \delta < \theta < 3\pi - \delta\}.
\end{equation} Then $Z_\hatep^\pm$ are well defined on $\Opi$. 

\begin{figure}[htbp]
  \centering
  \includegraphics{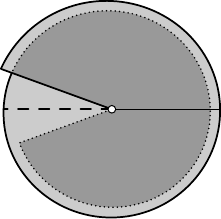}
  \caption{Sector $\Opi$.}
  \label{fig:secteur ep}
\end{figure}

Let us generalize the translation domains of $g_\eps$ to complex
values of $\ep$. Let $G_\hatep$ be the lift of $g_\ep$ on the time
coordinate. The bound $|G_\hatep - T_1| \leq C \max\{r,r'\}$ 
still holds for $\hatep \in\Opi$, for some constant $C$~\cite{germeDeploie}. So
we may take $r,r'$ small enough so that $G_\hatep$ and
$T_1$ are as close as needed.
We take a line $\ell$ that is transversal to 
the line of the holes such that $\ell$ and $G_\hatep(\ell)$
do not intersect each other and any of the holes (see Figure~\ref{fig:ep rotation}). 
Then we set $B_\ell$ to be region between $\ell$
and $G_\hatep(\ell)$, including the boundary, and we define
the translation domain
$$
  U_\hatep = \{G_\hatep^{\circ n}(Z) \mid Z\in B_\ell\}.
$$
Note that for $\arg\hatep =\pi$, this corresponds to the
Lavaurs translation domain defined in 
Definition~\ref{def:trans dom}.

We see that there are two non equivalent ways to choose $\ell$
for $\hatep$ such that $-\pi+\delta < \arg\hatep < \pi-\delta$
(see third row in Figure~\ref{fig:ep rotation} for $\ep >0$),
which explains why we choose $\Opi$ with an overlapping over these
values.

\begin{figure}[htbp]
  \centering
	\centerline{
    \subcaptionbox{Rotation of the holes when $\ep$ goes around~$0$
      clockwise\label{fig:ep tourne horaire}}
      [.5\textwidth]{
	    \centering
	  	\includegraphics{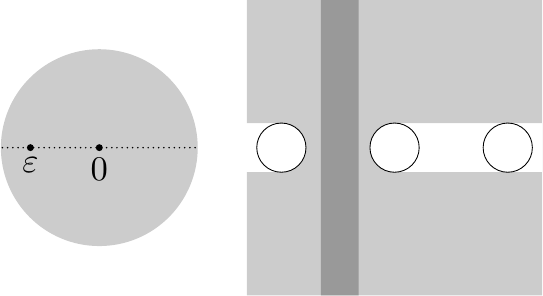}
	  	\vskip2em
	  	\includegraphics{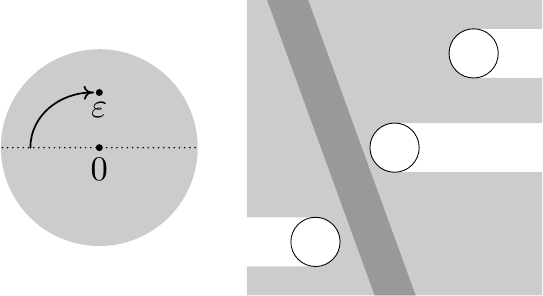}
      \vskip2em
	  	\includegraphics{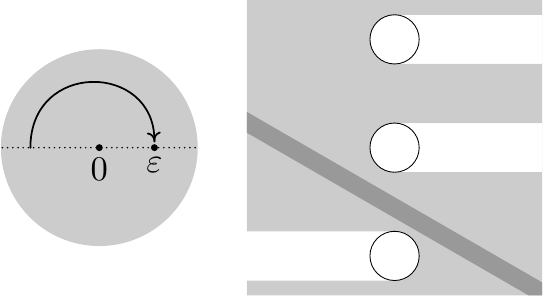}
      \vskip2em
	  	\includegraphics{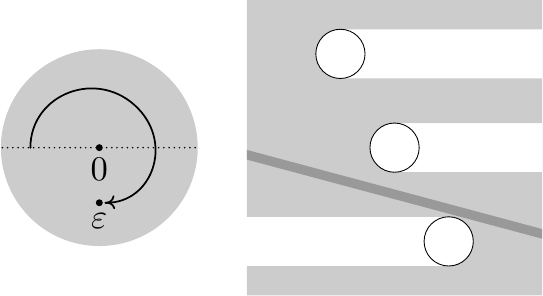}
      }
	  \hskip5em
    \subcaptionbox{Rotation of the holes when $\ep$ goes around~$0$
      counter-clockwise\label{fig:ep tourne antihoraire}}
      [.5\textwidth]{
	    \centering
	  	\includegraphics{figures/fatou-7}
	  	\vskip2em
	    \includegraphics{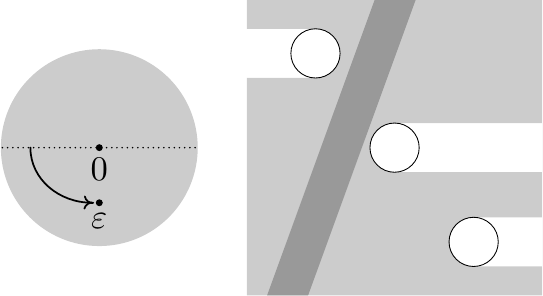}
	  	\vskip2em
	    \includegraphics{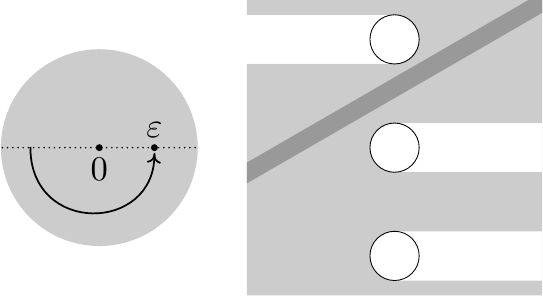}
	  	\vskip2em
	    \includegraphics{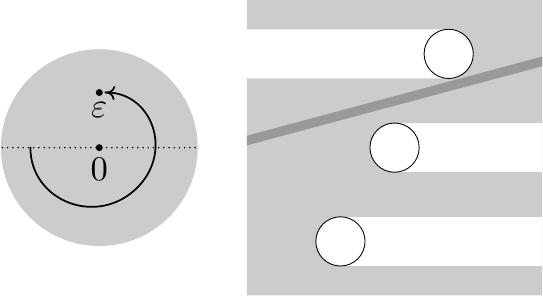}	
      }
  }
  \caption{Time coordinates and strips $B_\ell$ when  $\ep$ rotates from $\mathbb R^-$.}
  \label{fig:ep rotation}
\end{figure}

\begin{lem}[\cite{germeDeploie}] 
  Let $\hatep$ be the lift of $\ep$ on the universal
  covering of $D(0,r')^\ast$. If $r,r'$ are small enough, 
  then there exists a continuous
  family of translation domains ${\{U_{\hatep}\}}_{\hatep\in\Opi}$.
\end{lem}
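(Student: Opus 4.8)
The statement is the family version of the translation-domain construction already carried out for a single parameter value, so the plan is to reduce it to the holomorphic situation of \cite{germeDeploie} and to track continuity in $\hatep$. Since $b$ is real analytic and real-valued on the reals, $G_\hatep$ is the lift on the time coordinate of the holomorphic unfolding $g_\ep = f_\epbar\circ f_\ep$ of the holomorphic parabolic germ $g_0$; hence the construction of translation domains, and of a continuous family thereof, carried out in \cite{germeDeploie} for lifts of holomorphic unfoldings applies once we have fixed the sector $\Opi$ and the branches $Z_\hatep^\pm$ of the time coordinate (which are well defined on $\Opi$ by the discussion preceding the lemma). What remains is to explain the choice of the defining line $\ell=\ell_\hatep$ and why it can be made continuously over $\Opi$.

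First I would record the two geometric constraints on $\ell_\hatep$. \textbf{(a)} \emph{Transversality to the flow direction:} since $|G_\hatep - T_1|\le C\max\{r,r'\}$ uniformly for $\hatep\in\Opi$, if $\ell_\hatep$ makes an angle bounded away from $0$ and $\pi$ with the horizontal, then for $r,r'$ small enough $\ell_\hatep$ and $G_\hatep(\ell_\hatep)$ are disjoint and bound a topological strip $B_{\ell_\hatep}$; this uses only the uniform closeness of $G_\hatep$ to $T_1$. \textbf{(b)} \emph{Correct position relative to the holes:} the copies of the holes in the chart $Z_\hatep^+$ are spaced by the periods $\alpha_\hatep^\pm = \pm i\pi/\sqrt\hatep + i\pi b(\hatep)$ of~\eqref{eq:periode}, whose argument is, for $r'$ small, close to $\tfrac\pi2 - \tfrac12\arg\hatep$; one must place $\ell_\hatep$ at distance $\ge 2$ from all of them and, according to $\arg\hatep$, parallel to them (Glutsyuk type, near $\arg\hatep\equiv 0$) or transversal to them (Lavaurs type, near $\arg\hatep=\pi$), so that $U_\hatep = \{G_\hatep^{\circ n}(Z)\mid Z\in B_{\ell_\hatep}\}$ covers the appropriate portion of the time surface. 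Both the line of holes and $G_\hatep$ depend continuously on $\hatep$ (the $\alpha_\hatep^\pm$ are continuous and bounded on $\Opi$), and $\Opi$ is simply connected, so a continuous selection $\hatep\mapsto\ell_\hatep$ exists as soon as the set of admissible directions is never empty; granting this, $\hatep\mapsto B_{\ell_\hatep}$ and then $\hatep\mapsto U_\hatep$ are continuous, which is the assertion.

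The one point that needs care — and the reason the sector is taken to be $\Opi$ rather than the full punctured disc — is the compatibility of (a) and (b). Constraint (a) forbids the horizontal direction for $\ell_\hatep$; the direction(s) forbidden by (b) rotate with $\hatep$, the line of holes having argument $\approx \tfrac\pi2 - \tfrac12\arg\hatep$. These conflict with (a) precisely when $\tfrac\pi2 - \tfrac12\arg\hatep \equiv 0 \pmod\pi$, i.e.\ at $\arg\hatep\equiv\pi$ — but there the holes are horizontal and the classical Lavaurs choice of a vertical $\ell$ resolves (a) and (b) at once, while away from $\arg\hatep\equiv\pi$ there is an open cone of admissible directions and one can interpolate. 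As $\hatep$ runs over $\Opi$ the argument of the line of holes sweeps an interval of length $2\pi-\delta$, just under a full turn, so the admissible cone never collapses and a continuous selection exists along the simply connected $\Opi$; the presence, for $-\pi+\delta<\arg\hatep<\pi-\delta$, of two inequivalent admissible choices (Figure~\ref{fig:ep rotation}) is exactly why $\Opi$ is defined to overlap over those values. Checking that the resulting $U_\hatep$ covers the required region and carries the translation-domain structure is then identical to the holomorphic argument of \cite{germeDeploie} and need not be repeated. The main obstacle, as indicated, is not any single estimate but precisely this monodromy-type bookkeeping: making the quantitative transversality in (a) and the ``parallel vs.\ transversal to the holes'' requirement in (b) coexist continuously along the full angular range of $\Opi$.
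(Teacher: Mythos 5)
Your proposal is correct and follows essentially the same route as the paper, which states this lemma without proof (citing \cite{germeDeploie}) and whose surrounding discussion contains precisely your ingredients: the uniform bound $|G_\hatep-T_1|\le C\max\{r,r'\}$, a line $\ell$ transversal to the line of holes (whose direction rotates like $\tfrac{\pi}{2}-\tfrac{1}{2}\arg\hatep$) and disjoint from $G_\hatep(\ell)$ and from the holes, and the two inequivalent choices on the overlapping arguments that force working on $\Opi$. The only slightly loose step is deducing a continuous choice of $\ell_\hatep$ from simple connectivity of $\Opi$ alone; in \cite{germeDeploie} (and implicitly here) one just takes an explicit admissible direction varying continuously with $\arg\hatep$, so this is cosmetic rather than a gap.
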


On the overlapping sector $-\pi + \delta < \arg\hatep < \pi - \delta$,
both $\hatep$ and $e^{2i\pi}\hatep$ project on the same
point $\ep$. However, if ${\{U_{\hatep}\}}_{\hatep\in\Opi}$
is a continous family, then $U_{\hatep} \not= U_{e^{2i\pi}\hatep}$,
as seen on Figure~\ref{fig:ep rotation}.

\subsection{Fatou Coordinates and Transition Functions for
  a Complex Parameter}

We define a complex conjugate on $\Opi$. In a small sector centered on the negative real axis, i.e.~centered on $\arg\hatep = \pi$,
we define it as the reflection along this axis
and then we anti-holomorphically continue it on $\Opi$.
This gives us
\begin{equation}
  \arg \ol{\hatep\,} = 2\pi - \arg\hatep.
\end{equation}

\begin{prop}[Continuation of Fatou Coordinates]\label{prop:fatou ep complexe}
  Let $f_{\hatep}$ be the family induced on $\Opi$ by $f_\ep$
  and let $F_\hatep$ be the lift of $f_\hatep$ on the time
  coordinate.
  \begin{enumerate}
   \item The set 
      $$
        Q^\pm = \bigcup_{\hatep \in\Opi} \{\hatep\}\times U_\hatep^\pm
      $$
      is a complex manifold of dimension~2, where 
      ${\{U_\hatep^\pm\}}_{\hatep\in\Opi}$ is a continuous
      family of translation domains. 
    \item There exists a family 
      $\Phi^\pm = {\{\Phi_\hatep^\pm\}}_\hatep$
      of Fatou coordinates of $f_\hatep$ such that \begin{itemize}
  \item ${\{\Phi_\hatep^\pm\}}_\hatep$ satisfies 
    \begin{equation}\label{eq:F STt}
    \Phi_\hatepbar^\pm \circ F_\hatep\circ (\Phi_\hatep^\pm)\inv
      = \STt;    \end{equation}
  \item $\Phi^\pm$ is holomorphic on $Q^\pm$ and continuous at $\hatep=0$, i.e.~
    $$
      \lim_{\hatep\to 0\atop \hatep\in\Opi} 
      \Phi^\pm(\hatep,\cdot) = \Phi_0^\pm(\cdot),
    $$
    where the convergence is uniform on compact sets
    and $\Phi_0^\pm$ is a Fatou coordinate of $f_0$
    on $U_0^\pm$; 
    \item The family is uniquely determined
    by
    \begin{equation}\label{eq:determine Phi}
      \ol{\Phi_\hatepbar^\pm(X_\hatepbar{^\pm})} + \Phi_\hatep^\pm(X_\hatep{^\pm}) 
        = C{^\pm}(\hatep),
    \end{equation}
    where $X_\hatep{^\pm}$ is a base point, $C{^\pm}$ commutes with 
    $\sigma$ and both $X_\hatep{^\pm}$ and $C{^\pm}$ are holomorphic in $\hatep\neq0$ with continuous limit at $\hatep=0$.
\end{itemize}
   % \item Let ${\{(\Phi_\hatep^\pm)'\}}_{\hatep\in\Opi}$ be another family of Fatou coordinates as in Point 2, then there exists a
 %   family of constants $\{{\color{red}D}_\hatep{\color{red}^\pm}\in\Cp\}_{\hatep\in \Opi}$ such that ${\color{red}D}_{\hatepbar}{\color{red}^\pm}=\overline{{\color{red}D}_{\hatep}{\color{red}^\pm}}$
 %   and \begin{equation} (\Phi_\hatep^\pm)'  = T_{{\color{red}D}_\hatep{\color{red}^\pm}} \circ \Phi_\hatep^\pm.\end{equation} 
\end{enumerate}
\end{prop}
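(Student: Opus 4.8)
The plan is to reduce the statement for $f_{\hatep}$ to the already-available results for the holomorphic unfolding $g_\ep = f_\epbar\circ f_\ep$, exactly as in the real-parameter case (Theorem~\ref{theo:coord fatou}), and then exploit the antiholomorphic symmetry to normalize. First I would invoke the result of~\cite{germeDeploie} (quoted as the preceding Lemma) that on $\Opi$ there is a continuous family of translation domains ${\{U_\hatep^\pm\}}$ for $G_\hatep$, together with holomorphic (in $\hatep$) Fatou coordinates $\Phi_\hatep^\pm$ conjugating $G_\hatep$ to $T_1$, continuous at $\hatep=0$. The fact that $Q^\pm=\bigcup_{\hatep}\{\hatep\}\times U_\hatep^\pm$ is a $2$-dimensional complex manifold is then immediate from the openness and continuity of the family, since each $U_\hatep^\pm$ is open and the family varies holomorphically; this establishes point~1. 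The Fatou coordinates of $g_\hatep$ can be chosen holomorphic on $Q^\pm$ and continuous at $\hatep=0$ by fixing a base point, as in the proof of Theorem~\ref{theo:coord fatou}.

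The next step is to turn a Fatou coordinate $\Phi_\hatep^\pm$ of $g_\hatep$ into one of $f_\hatep$ in the sense of~\eqref{eq:F STt}. Set $P_\hatep := \Phi_\hatepbar^\pm\circ F_\hatep\circ(\Phi_\hatep^\pm)\inv$. Using $g_\ep=f_\epbar\circ f_\ep$ and $\sigma\circ v_\ep^t = v_\epbar^{\ol t}\circ\sigma$ (hence $\Sigma\circ F_\hatep = F_\hatepbar\circ\Sigma$ in time coordinate, up to the covering identifications), one checks that $P_\hatepbar\circ\Sigma\circ P_\hatep = \Sigma\circ T_1$, so that $\Sigma\circ P_\hatep$ conjugates $T_1$ to itself; combined with the commutation of $P_\hatep$ with $T_1$ coming from $P_\hatepbar\circ P_\hatep = T_1$ (the lift of $g$), this forces $P_\hatep$ to have the form $\Sigma\circ T_{C^\pm(\hatep)}$ for some $C^\pm(\hatep)$. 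The argument here is the complexified version of the torus/sphere argument in the proof of Theorem~\ref{theo:coord fatou}: a holomorphic self-map of the \'Ecalle cylinder (or of the relevant quotient, for $\arg\hatep$ in the Glutsyuk range) commuting with the appropriate translations must be linear, hence in the time coordinate a translation. Then $\Sigma\circ T_{C^\pm}$ has $C^\pm = \tfrac12 + iy^\pm(\hatep)$, and postcomposing $\Phi_\hatep^\pm$ with the translation $T_{iy^\pm(\hatep)/2}$ (adjusted so that the recipe is consistent between $\hatep$ and $\hatepbar$, using $\ol{b(\ep)}=b(\epbar)$) produces a genuine Fatou coordinate of $f_\hatep$ satisfying~\eqref{eq:F STt}. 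Holomorphy in $\hatep$ and continuity at $\hatep=0$ are inherited from the holomorphic Fatou coordinates of $g_\hatep$ once we know $y^\pm$ is holomorphic in $\hatep\neq0$ with a continuous limit, which follows because $C^\pm(\hatep)$ is built from the transition data of $g_\hatep$, itself holomorphic.

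Finally, for the uniqueness statement, point~2 of Theorem~\ref{theo:coord fatou} tells us the Fatou coordinates of $g_\hatep$ are unique up to a translation $T_{R^\pm(\hatep)}$, and~\eqref{eq:F STt} cuts this ambiguity down to translations commuting suitably with $\Sigma$, i.e.\ pairs $(\Phi_\hatep^\pm,\Phi_\hatepbar^\pm)$ differing by $(T_a,T_{\ol a})$ with $a$ arbitrary. The normalization~\eqref{eq:determine Phi}, namely $\ol{\Phi_\hatepbar^\pm(X_\hatepbar^\pm)} + \Phi_\hatep^\pm(X_\hatep^\pm) = C^\pm(\hatep)$ with $C^\pm$ commuting with $\sigma$ and $X_\hatep^\pm$, $C^\pm$ holomorphic in $\hatep\neq0$ with continuous limits, pins down $a$ uniquely: the freedom $a\mapsto a+t$ changes the left-hand side by $t+\ol t = 2\Re t$, so exactly one choice of $\Re t$ achieves the prescribed value, and the remaining purely imaginary ambiguity is absorbed because it corresponds to a shift that is incompatible with~\eqref{eq:F STt} unless it is zero — more precisely, to preserve~\eqref{eq:F STt} one needs the shift on $\Phi_\hatep^\pm$ and on $\Phi_\hatepbar^\pm$ to be complex conjugates, so only the real part is free. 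I expect the main obstacle to be bookkeeping the covering-space subtleties on $\Opi$: the line of holes rotates as $\arg\hatep$ varies, the translation domains are genuinely different for $\hatep$ and $e^{2i\pi}\hatep$, and one must verify that the conjugation identity~\eqref{eq:F STt} relating $\hatep$ and $\hatepbar$ (with $\arg\ol{\hatep}=2\pi-\arg\hatep$) is consistent with the chosen continuous family of domains and with the covering transformations $T_{\alpha_\hatep^\pm}$ in the Glutsyuk range. Once the geometry is set up correctly, the algebraic part is a direct adaptation of the real-parameter proof.
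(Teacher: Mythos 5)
Your overall route is the same as the paper's: take the holomorphic-in-$\hatep$ Fatou coordinates of $g_\ep$ on the translation domains furnished by \cite{germeDeploie} (point~1 then being essentially quoted from there), set $P_\hatep=\Phi_\hatepbar^\pm\circ F_\hatep\circ(\Phi_\hatep^\pm)\inv$, use $P_\hatepbar\circ P_\hatep=T_1$ to get commutation with $T_1$ and the torus/cylinder argument of Theorem~\ref{theo:coord fatou} to conclude $P_\hatep=\Sigma\circ T_{C^\pm(\hatep)}$, then correct by a translation and pin the family down via~\eqref{eq:determine Phi}. (Your auxiliary identity $P_\hatepbar\circ\Sigma\circ P_\hatep=\Sigma\circ T_1$ is not correct and is not needed; the relation $P_\hatepbar\circ P_\hatep=T_1$, which you also state, is the one that does the work.)

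There is, however, a gap exactly at the point that distinguishes this proposition from the real-parameter Theorem~\ref{theo:coord fatou}: your normalization and uniqueness computations are written with the real-parameter formulas, which fail for non-real $\hatep$. From $P_\hatepbar\circ P_\hatep=T_1$ one obtains the relation $\ol{C^\pm(\hatepbar)}+C^\pm(\hatep)=1$ linking the values at $\hatep$ and $\hatepbar$, \emph{not} $C^\pm(\hatep)=\tfrac12+iy^\pm(\hatep)$ with $y^\pm$ real; the latter holds only on the ray $\arg\hatep=\pi$, where $\hatepbar=\hatep$. So the correction cannot be ``postcompose with $T_{iy^\pm(\hatep)/2}$''; one must choose a holomorphic $t^\pm(\hatep)$ (continuous at $0$) solving $\ol{t^\pm(\hatepbar)}-t^\pm(\hatep)=\tfrac12-C^\pm(\hatep)$, for instance $t^\pm=\tfrac{C^\pm}{2}-\tfrac14$, and check that $T_{t^\pm(\hatep)}\circ\Phi^\pm_\hatep$ then satisfies~\eqref{eq:F STt}. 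The same slip occurs in your uniqueness argument: the residual freedom is not a constant $a$ whose real part is fixed, but a holomorphic function $D(\hatep)$, which preservation of~\eqref{eq:F STt} constrains by $D(\hatepbar)=\ol{D(\hatep)}$ — this does \emph{not} force $D$ to be real off $\arg\hatep=\pi$ — and the left-hand side of~\eqref{eq:determine Phi} then changes by $\ol{D(\hatepbar)}+D(\hatep)=2D(\hatep)$, not by $2\Re t$; setting this to zero gives $D\equiv0$. These are precisely the computations the paper carries out, and with them your argument closes; as written, though, the $\tfrac12+iy$ decomposition and the $2\Re t$ count establish the statement only for real $\ep$, so the complex-parameter case you set out to prove is not actually reached.
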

\begin{proof}
The idea for the proof of Point 1 is that for $(\hatep_0,Z_0)\in Q^\pm$ 
with $Z_0\in B_\ell$, there exists a bi-disk 
$D(\hatep_0,r_1)\times D(Z_0,r_2)\subset Q^\pm$ for
some $r_1,r_2>0$ small enough. Then for other 
points $(\hatep_0,Z)\in Q^\pm$, there is a neighborhood of
the form 
$D(\hatep_0, r_1')\times G^{\circ n}_{\hatep_0}\big(D(Z_0,r_2')\big)\subset Q^\pm$,
for $r_1', r_2' >0$ small enough and $n\in\Z$ and $Z_0\in B_\ell$ such
that $G_{\hatep_0}^{\circ n}(Z_0) = Z$. (More details in \cite{germeDeploie}).

For Point 2, we drop the upper indices $\pm$. It is known (\cite{germeDeploie}) that there exist Fatou coordinates  ${\{\wt\Phi_\hatep\}}_{\hatep\in \Opi}$ for $g_\ep$ holomorphic 
in $\hatep$ with continuous limit at $\ep=0$. Let \begin{equation}\widetilde{P}_\hatep=\wt\Phi_\hatepbar\circ F_\hatep\circ (\wt\Phi_\hatep)^{-1}.\label{def:P}\end{equation}
Then $\widetilde{P}_\hatepbar\,\circ \widetilde{P}_\hatep=T_1$, from which it follows that $\widetilde{P}_\hatep\,\circ T_1=T_1\circ \widetilde{P}_\hatep$. As in Theorem~\ref{theo:coord fatou} it follows that $\widetilde{P}_\hatep=\Sigma\circ T_{C(\hatep)}$, with  $C(\hatepbar) + \ol{C(\hatep)} = 1$ and $C$ depending analytically on $\hatep$ with continuous limit at $\ep=0$.   We set $\Phi_\hatep = T_{-\ol{C(\hatepbar)}} \circ \wt\Phi_\hatep$.
%  these are Fatou coordinates of $f_\ep$ holomorphic in $\hatep$. 
 
 To see that~\eqref{eq:determine Phi} determines ${\{\Phi_\hatep\}}_\hatep$,
suppose ${\{\Phi^\dag_\hatep\}}_\hatep$ is another family such
that
$$
  \ol{\Phi_\hatepbar(X_\hatepbar)} + \Phi_\hatep(X_\hatep)
  = 
  \ol{\Phi_\hatepbar^\dag(X_\hatepbar)} + \Phi_\hatep^\dag(X_\hatep).
$$
By uniqueness of Fatou coordinates of $g_\ep$ up to translation, there exists $D$ such that
$D(\hatepbar) = \ol{D(\hatep)}$ and 
$\Phi_\hatep = T_{D(\hatep)} \circ \Phi_\hatep^\dag$.
Substituting this in the previous equation yields
$D(\hatep)=0$.
%Point 3 follows from the uniqueness up to translation of the
%Fatou coordinates and having to preserve~\eqref{eq:F STt}.
\end{proof}

\begin{prop}[Continuation of the Transition Functions]
\label{prop:continuation Psi}
Let ${\{\Phi_\hatep^\pm\}}_\hatep$ be two families of
Fatou coordinates of $f_\ep$ on $U_\hatep^\pm$.
We define the \emph{transition functions for 
$\hatep\in\Opi$} by
\begin{align}
  \Psi_\hatep^\infty(W) &:= \Phi_\hatep^- \circ 
    T_{-i\pi b(\hatep)} \circ (\Phi_\hatep^+)(W),\\[2\jot]
  \Psi_\hatep^0(W) &:= \Phi_\hatep^- \circ 
    T_{i\pi b(\hatep)} \circ (\Phi_\hatep^+)(W),
\end{align}
where the composition is defined respectively above
or below the fundamental hole.
Then we have
\begin{enumerate}
  \item $\Psi_\hatep^\infty$, for $\arg\hatep = \pi$, coincides
    with $\Psi_\ep^\infty$, for $\ep < 0$;
  \item $\Psi_\hatep^\infty$ commutes with $T_1$; 
    \item $\Psi_\hatep^\infty$ and $\Psi_\hatep^0$ satisfy
    \begin{equation}\label{eq:Psi commute hatep}
      \STt \circ \Psi_\hatep^\infty = \Psi_\hatepbar^0 \circ \STt;
    \end{equation}
  \item $\Psi_\hatep^\infty$ has the series expansion
    \begin{equation}\label{eq:serie PsiInf hatep}
      \Psi_\hatep^\infty(W) = W + c_0^\infty(\hatep) + 
        \sum_{n=1}^\infty c_n^\infty(\hatep) e^{2i\pi n W};
    \end{equation}
  \item If ${\{\Phi_\hatep^\pm\}}_\hatep$ are families holomorphic
    in $\hatep\not=0$ with a limit as $\hatep\to 0$,
    then $\Psi_\hatep^\infty$ is holomorphic in $\hatep\not=0$
    and has a limit when $\hatep\to 0$;

  \item If we choose the base point $X_\hatep^+ = Z_\hatep^+(r) = 0$, 
    then the family ${\{\Phi_\hatep^+\}}_\hatep$ determined by
    $\Phi_\hatep^+(X_\hatep^+)= 0$ is a family of Fatou coordinates 
    of $f_\ep$ depending analytically of $\hatep\neq0$ with continuous limit at $\hatep=0$. We can then further choose the second family of Fatou coordinates so that the transition functions be normalized, namely so that the constant term of $c_0^\infty(\hatep)$ in \eqref{eq:serie PsiInf hatep} is given by $c_0^\infty(\hatep)= -i\pi b(\hatep)$. Then $\Phi_\hatep^-$ also depends analytically of $\hatep$ with continuous limit at $\hatep=0$.
\end{enumerate}
\end{prop}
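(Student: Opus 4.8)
The plan is to take the six items in increasing order of difficulty, the substance lying in items~3 and~6.

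Items~1 and~2 are essentially formal. For item~1, restricting $\hatep\in\Opi$ to the ray $\arg\hatep=\pi$ gives $\ep<0$; the translation domain $U_\hatep$ then coincides with the Lavaurs domain of Definition~\ref{def:trans dom}, and with the matching choice of Fatou coordinates the defining formula for $\Psi_\hatep^\infty$ is verbatim~\eqref{def_psi}. For item~2, $T_{\pm i\pi b(\hatep)}$ and $T_1$ are translations, hence commute, while each $\Phi_\hatep^\pm$ conjugates the lift $G_\hatep$ of $g_\ep$ (in the relevant time chart) to $T_1$ and $T_{-i\pi b(\hatep)}$ intertwines the two charts; inserting this into~\eqref{def_psi} gives $\Psi_\hatep^\infty\circ T_1=T_1\circ\Psi_\hatep^\infty$, exactly as in Proposition~\ref{prop:prop analytiques}. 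Items~4 and~5 then follow quickly: by item~2, $\Psi_\hatep^\infty(W)-W$ is $1$-periodic, hence expands as $\sum_{n\in\Z}c_n^\infty(\hatep)e^{2i\pi nW}$, and the vanishing of the coefficients with $n<0$ comes, as in the real Lavaurs case, from the boundedness of $\Psi_\hatep^\infty(W)-W$ as $\Im W\to+\infty$ on the translation domain; for item~5, if $\{\Phi_\hatep^\pm\}$ is holomorphic in $\hatep\neq0$ on $Q^\pm$ with a limit at $\hatep=0$ (and $b$ extends holomorphically since it is real analytic), then $\Psi_\hatep^\infty=\Phi_\hatep^-\circ T_{-i\pi b(\hatep)}\circ(\Phi_\hatep^+)\inv$ is holomorphic jointly off $\hatep=0$ with a limit at $0$, and extracting Fourier coefficients gives the same for each $c_n^\infty$.

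The main point is item~3. I would first establish~\eqref{eq:Psi commute hatep} for a choice of Fatou coordinates depending holomorphically on $\hatep$ as in item~5. On the ray $\arg\hatep=\pi$ one has $\hatep=\hatepbar=\ep<0$, so~\eqref{eq:Psi commute hatep} becomes $\STt\circ\Psi_\ep^\infty=\Psi_\ep^0\circ\STt$; this follows from~\eqref{eq:PsiInf commute}, which reads $\Psi_\ep^\infty\circ\STt=\STt\circ\Psi_\ep^0$, together with $\STt^2=T_1$ and item~2 (conjugate~\eqref{eq:PsiInf commute} by $\STt$ and commute $\Psi_\ep^\infty$ past $T_1$). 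For each fixed $W$, both sides of~\eqref{eq:Psi commute hatep} are anti-holomorphic in $\hatep$ on the connected sector $\Opi$ --- the reflection $\hatep\mapsto\hatepbar$ and the conjugation inside $\STt$ each contribute a conjugation, while $\Psi_\hatep^\infty$ and $\Psi_\hatep^0$ are holomorphic in $\hatep$ by item~5 --- so agreement on the ray forces agreement on $\Opi$ by the identity theorem. Finally, any two families of Fatou coordinates satisfying~\eqref{eq:F STt} differ by translations $T_{R^\pm(\hatep)}$ with $R^\pm$ commuting with $\sigma$, and since $\STt\circ T_{R(\hatep)}=T_{\overline{R(\hatep)}}\circ\STt=T_{R(\hatepbar)}\circ\STt$ for such $R$, a short computation shows~\eqref{eq:Psi commute hatep} is preserved under changing the families; hence it holds for arbitrary Fatou coordinates.

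For item~6, the base point $X_\hatep^+=Z_\hatep^+(r)=0$ is constant in $\hatep$, so by Proposition~\ref{prop:fatou ep complexe}(2) the family $\{\Phi_\hatep^+\}$ normalized by $\Phi_\hatep^+(0)=0$ is holomorphic in $\hatep\neq0$ with a limit at $\hatep=0$; pick also any holomorphic-in-$\hatep$ family $\{\Phi_\hatep^-\}$. By item~5 the resulting $c_0^\infty(\hatep)$ is holomorphic in $\hatep\neq0$ with a limit at $0$. Comparing constant terms in~\eqref{eq:Psi commute hatep} gives $\overline{c_0^\infty(\hatep)}=c_0^0(\hatepbar)$, which together with $c_0^\infty-c_0^0=-2i\pi b$ (Remark~\ref{rem:cinf 2ipi}, extended to $\Opi$ by analytic continuation) yields $\overline{c_0^\infty(\hatep)}=c_0^\infty(\hatepbar)+2i\pi b(\hatepbar)$. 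Hence $\mu(\hatep):=-i\pi b(\hatep)-c_0^\infty(\hatep)$ satisfies $\overline{\mu(\hatepbar)}=\mu(\hatep)$, so replacing $\Phi_\hatep^-$ by $T_{\mu(\hatep)}\circ\Phi_\hatep^-$ preserves~\eqref{eq:F STt}, keeps the holomorphic dependence on $\hatep$ with a limit at $0$, and shifts the constant term of~\eqref{eq:serie PsiInf hatep} to $-i\pi b(\hatep)$. The hardest part is not any single estimate but the bookkeeping in items~3 and~6: one must track consistently that the lifted conjugation $\Sigma$ sends the $\hatep$-time-surface to the $\hatepbar$-one while keeping the $\pm$ labels and swapping the two sides of the fundamental hole, the identities $\overline{b(\hatep)}=b(\hatepbar)$ and $\overline{\alpha_\hatep^+}=\alpha_\hatepbar^-$, and the fact that~\eqref{eq:F STt} restricts admissible changes of Fatou coordinates to $\sigma$-commuting translations --- which is precisely what makes the normalizing translation $\mu$ legitimate.
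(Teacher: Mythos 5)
Your proposal is correct, and items 1, 2, 4, 5 follow the same standard lines as the paper; the difference is in item 3 (and, to a lesser extent, item 6). For \eqref{eq:Psi commute hatep} the paper does not continue analytically from the ray $\arg\hatep=\pi$: it obtains the identity directly from \eqref{eq:F STt}, since $\STt\circ\Psi^\infty_\hatep=\Phi^-_\hatepbar\circ F_\hatep\circ T_{-i\pi b(\hatep)}\circ(\Phi^+_\hatep)\inv$ and $\Psi^0_\hatepbar\circ\STt=\Phi^-_\hatepbar\circ T_{i\pi b(\hatepbar)}\circ F_\hatep\circ(\Phi^+_\hatep)\inv$, which coincide because the antiholomorphic lift satisfies $F_\hatep\circ T_c=T_{\bar c}\circ F_\hatep$ and $\ol{b(\hatep)}=b(\hatepbar)$. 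That computation is pointwise in $\hatep$ and valid at once for arbitrary families satisfying \eqref{eq:F STt}, so it needs neither the holomorphic choice of Fatou coordinates nor the identity theorem. Your route (restrict to $\arg\hatep=\pi$, use \eqref{eq:PsiInf commute}, continue antiholomorphically in $\hatep$, then transfer to arbitrary families via translations $T_{R(\hatep)}$ with $R(\hatepbar)=\ol{R(\hatep)}$) does work, but the continuation step is stated too loosely: for a fixed $W$ the two sides are only defined for $\hatep$ in part of $\Opi$, since the translation domains rotate with $\arg\hatep$ over an interval of length almost $4\pi$, so one must propagate the identity jointly in $(\hatep,W)$ on the connected manifold where both compositions are defined (as in the construction of $Q^\pm$), not ray-to-sector for each fixed $W$; this is routine but should be said. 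For item 6 your argument is essentially the paper's (both normalizations are conditions of the type \eqref{eq:determine Phi}), and your explicit verification that the normalizing translation $\mu(\hatep)=-i\pi b(\hatep)-c_0^\infty(\hatep)$ satisfies $\ol{\mu(\hatep)}=\mu(\hatepbar)$ — using $\ol{c_0^\infty(\hatep)}=c_0^0(\hatepbar)$ from \eqref{eq:Psi commute hatep} and $c_0^\infty-c_0^0=-2i\pi b$ continued to $\Opi$ — supplies exactly the compatibility with \eqref{eq:F STt} that the paper leaves implicit; this is a worthwhile addition rather than a deviation.
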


\begin{proof}

Point 1 follows from the fact that the Lavaurs translation
domains $U_\ep^\pm$ for $\ep<0$ coincide with the translation
domains $U_\hatep^\pm$ for $\arg\hatep = \pi$, and hence so do the
Fatou coordinates and therefore, the transition functions. 

Equation~\eqref{eq:Psi commute hatep} follows
directly from~\eqref{eq:F STt}.

The proof of~\eqref{eq:serie PsiInf hatep}
is indentical to the case of the Lavaurs transition function for $\ep < 0$.
See~Proposition~\ref{prop:prop analytiques}.

  Point 5 is obvious.
  
  Lastly, for point 6, we see that the families 
  ${\{\Phi_\hatep^\pm\}}_\hatep$ both satisfy some condition of the form~\eqref{eq:determine Phi} with $C$ depending analytically on $\hatep$ with continuous limit at $\hatep =0$.
  \end{proof}

\begin{deff}\label{def:normalized fatou}
  We say that a pair of families of Fatou coordinates ${\{\Phi_\hatep^\pm\}}_\hatep$
  is strongly normalized if they are chosen as described in point 6 of 
  Proposition~\ref{prop:continuation Psi}.  When this is the case,
  the transition functions are said to be \emph{\emph strongly normalized}.
\end{deff}

\paragraph{Continuation of the Lavaurs Translation.}
The Lavaurs translation was defined in Lemma~\ref{lem_orbite_neg}. It can be extended for $\hatep\in\Opi$ in the following way.
We consider two vertical strips in the Fatou coordinates,
one on the left and one on the right of the fundamental
hole, see Figure~\ref{fig:ep rotation} for the strip on the left.
In the $z$-coordinate, they correspond to two croissants,
as in Figure~\ref{fig:croissant ep}.
This allows us to define the Lavaurs translation for
$\hatep\in \Opi$. Indeed, since each orbit intersects both
croissants at least once, we can define it the same way we
did for $\ep < 0$. 

\begin{figure}[htbp]
  \centering
	\includegraphics[scale=.8]{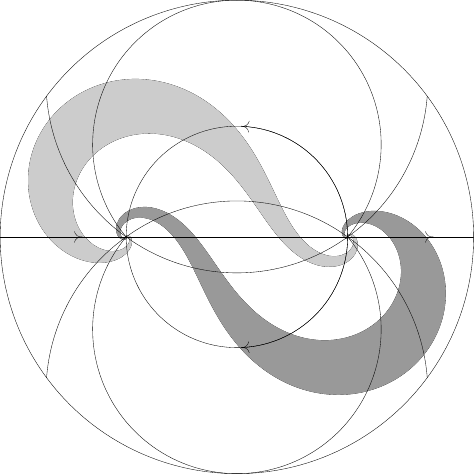}
	\qquad\qquad
  \includegraphics[scale=.9]{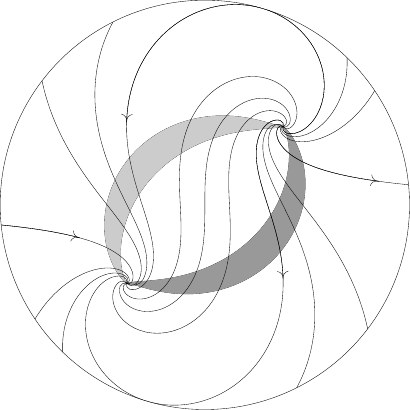}
  \caption{Croissants for $\ep = {1\over 4}$ and
  $\ep = {e^{i\pi/2}\over 4}$ in the case of the
  normal form, with $b=0$.}
  \label{fig:croissant ep}
\end{figure}

\subsection{The strong modulus of analytic classification} 

\begin{deff}\label{def:strong_modulus}
  Let $f_\eta$ be a generic unfolding of a
  parabolic antiholomorphic germ of codimension~1.
  Its \emph{strong modulus of classification} is the
  triple $(\ep, b, [\{\Psi^\infty_\hatep\}_{\hatep\in\Opi}])$, where $\ep$
  is the canonical parameter of $f_\eta$, $b$
  is the formal invariant (a real analytic function of $\ep$) and $[\{\Psi^\infty_\hatep\}_{\hatep\in\Opi}]$
  is an equivalence class of normalized families of transition functions
  under the relation $\sim$
  $$
    {\{\Psi_\hatep^\infty\}}_{\hatep}\sim {\{\widetilde{\Psi}_\hatep^\infty\}}_{\hatep} \iff
    \begin{aligned}
      &\exists C(\hatep) \text{ where $C$ is analytic for $\hatep\not=0$}\cr
      &  \text{with continuous limit at $\ep=0$, }\cr
      &{\ol{C(\hatep)} = C(\hatepbar),}\cr
      &\widetilde{\Psi}_\hatep^\infty = T_{C(\hatep)} \circ \Psi_\hatep^\infty \circ T_{-C(\hatep)}.
    \end{aligned}
  $$ 
\end{deff}

\subsection{The Strong Classification Theorem}\label{sec:proof strong class}
Recall that the strong equivalence corresponds
to Definition~\ref{def:eq forte}. 
\begin{theo}[Strong Classification]\label{theo:strong class}
 The following are equivalent:
 \begin{description}
 \item{(1)} Two generic unfoldings of antiholomorphic parabolic germs 
  of codimension~1 are strongly equivalent.
  \item{(2)} They have the same weak modulus of classification.
  \item{(3)} They have the same strong modulus of classification.\end{description}\end{theo}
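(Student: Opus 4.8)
The strategy is to establish the cycle of implications $(1) \Rightarrow (2) \Rightarrow (3) \Rightarrow (1)$, so that the three statements become equivalent. The implication $(1) \Rightarrow (2)$ is the easy direction: a strong equivalence is in particular a weak equivalence (by comparing Definitions~\ref{def:eq forte} and~\ref{def:weak_equivalence}), so by the Weak Classification Theorem~\ref{theo:weak eq} the two families share the same weak modulus; here one also checks that the equivalence preserves the canonical parameter and formal invariant, since these are built from multipliers of $f_\ep^{\circ 2}$ at its fixed points, which are analytic invariants (Theorem~\ref{theo:ep canonique}). The implication $(2) \Rightarrow (3)$ requires showing that the weak modulus $[\Psi_\ep]$ (for real $\ep$) determines the strong modulus $[\{\Psi^\infty_{\hatep}\}_{\hatep\in\Opi}]$: one extends $f_\ep$ antiholomorphically in $\ep$ as in Section~\ref{sec:strong class}, so that $g_\ep = f_\epbar \circ f_\ep$ depends holomorphically on $\ep$, and invokes the continuation results of Proposition~\ref{prop:fatou ep complexe} and Proposition~\ref{prop:continuation Psi}: the strongly normalized transition functions $\Psi^\infty_{\hatep}$ are holomorphic in $\hatep \neq 0$ with continuous limit at $\hatep = 0$, hence are uniquely determined by their restriction to $\arg\hatep = \pi$, which is exactly $\Psi^\infty_\ep$ for $\ep < 0$; and this in turn is determined by the weak modulus. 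One must verify that two representatives of the weak modulus differing by $T_{C(\ep)}$ with $C$ real-valued give rise to representatives of the strong modulus differing by a conjugation $T_{C(\hatep)}$ with $\ol{C(\hatep)} = C(\hatepbar)$, which follows because the holomorphic continuation of a real-analytic $C(\ep)$ automatically satisfies this symmetry.

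The substantial implication is $(3) \Rightarrow (1)$: two families with the same strong modulus are strongly equivalent. The plan is to mimic the construction of the conjugacy $h_\ep$ from the proof of Theorem~\ref{theo:weak eq}, but working over the sector $\Opi$ with strongly normalized Fatou coordinates $\Phi^\pm_{j,\hatep}$ ($j = 1,2$). Since the two families have the same strong modulus, one can arrange (after conjugating one family by a suitable $T_{C(\hatep)}$) that $\Phi^-_{1,\hatep} \circ T_{-i\pi b} \circ (\Phi^+_{1,\hatep})\inv = \Psi^\infty_{\hatep} = \Phi^-_{2,\hatep} \circ T_{-i\pi b} \circ (\Phi^+_{2,\hatep})\inv$ on the overlap above the fundamental hole. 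Then one defines $h^\pm_{\hatep}$ on the two regions $S^\pm_{\hatep}$ by the formula~\eqref{eq:def h ep} with $Z^\pm_{\hatep}$ in place of $Z^\pm_\ep$, and the same chain of computations as in~\eqref{eq:calcul h pm} shows that $h^+_{\hatep} = h^-_{\hatep}$ on the overlaps, giving a well-defined $h_{\hatep}$ on $D(0,r) \setminus \{\pm\sqrt\ep\}$ that extends across $\pm\sqrt\ep$ by boundedness. The new point over the real-parameter case is analyticity in $\hatep$: each $\Phi^\pm_{j,\hatep}$ is holomorphic in $\hatep \neq 0$ with continuous limit at $\hatep = 0$, and the time coordinates $Z^\pm_{\hatep}$ depend analytically on $\hatep$, so $h_{\hatep}$ is holomorphic in $\hatep \neq 0$ with continuous limit; by Riemann's removable singularity theorem $h_{\hatep}$ is then holomorphic in $\hatep$ across $\hatep = 0$.

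The main obstacle, and the reason for the detour through the complexified parameter, is precisely this last analyticity claim: over real $\ep$ the Fatou coordinates fail to be analytic at $\ep = 0$, so the weak-equivalence $h_\ep$ is only continuous there. Two issues must be handled carefully. First, one must check that the $h_{\hatep}$ constructed over $\Opi$ actually descends to a well-defined function of $\ep$ — i.e.\ that $h_{\hatep}$ and $h_{e^{2i\pi}\hatep}$ agree on the overlap $-\pi+\delta < \arg\hatep < \pi-\delta$ of $\Opi$ with itself; this uses that both are built from strongly normalized coordinates and that the strong modulus (being a single well-defined object on $\Opi$) forces the two determinations to match — this is where the \emph{compatibility} mentioned in the introduction implicitly enters, though here it is automatic since we start from a genuine family. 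Second, in the trivial case where the transition functions are generically translations, the construction has extra freedom and one must invoke the strongly normalized Fatou coordinates to pin down $h_{\hatep}$ and force holomorphy in $\hatep$, exactly as flagged at the start of Section~\ref{sec:strong class}. Once $h_{\hatep}$ is holomorphic in $\hatep$, restricting to real $\ep$ and recalling that $h_{\hatep}$ commutes appropriately with $\sigma$ (inherited from~\eqref{eq:determine Phi} and the symmetry $\ol{b(\ep)} = b(\epbar)$) yields a mix analytic diffeomorphism $H(\eta,z) = (\eta, h_\eta(z))$ with $\beta = \mathrm{id}$ realizing the strong equivalence of Definition~\ref{def:eq forte}.
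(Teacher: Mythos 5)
Your overall architecture ((1)$\Rightarrow$(2)$\Rightarrow$(3) by weak classification plus analytic continuation in $\hatep$, then (3)$\Rightarrow$(1) by building $h_\hatep$ from Fatou coordinates over $\Opi$ as in \eqref{eq:def h ep}) matches the paper, and your treatment of the trivial case (strongly normalized Fatou coordinates pin down $h_\hatep$) is essentially the paper's. But there is a genuine gap at the single most delicate point of the proof: showing that $h_\hatep$ descends to a well-defined function of $\ep$, i.e.\ that $h_\hatep = h_{\hatep e^{2i\pi}}$ on the overlap $-\pi+\delta<\arg\hatep<\pi-\delta$, in the nontrivial case. You dismiss this as ``automatic since we start from a genuine family,'' but it is not: the translation domains $U_\hatep$ and $U_{\hatep e^{2i\pi}}$ are genuinely different (Lavaurs-type versus Glutsyuk-type strips; see Figure~\ref{fig:ep rotation}), so the Fatou coordinates at $\hatep$ and at $\hatep e^{2i\pi}$ are distinct objects with no a priori identity between them, and the two conjugacies they produce need not coincide. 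Moreover, the device you lean on elsewhere --- strong normalization --- is not available here: to make the transition functions of the two families literally equal (nontrivial case) you must conjugate one family's Fatou coordinates by some $T_{C(\hatep)}$, and after that adjustment the base-point condition $\Phi^+_\hatep(0)=0$ is in general destroyed, so the argument ``$k_\hatep$ fixes $z=r$'' no longer applies. This is exactly why the paper splits into the two cases.

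What the paper actually does at this point is a rigidity argument that your proposal never supplies: setting $k_\hatep=(h_{\hatep e^{2i\pi}})^{-1}\circ h_\hatep$, one checks that $k_\hatep$ commutes with $g_{1,\ep}$, invokes Lemma~\ref{lem:symmetries} (the centralizer of $g_\ep$ is reduced to iterates $g_\ep^{\circ m}$, $m$ rational, precisely because some transition function is not a translation), and then uses $k_\hatep\to\mathrm{id}$ as $\ep\to0$ to force $m=0$. Without this lemma (or an equivalent argument, e.g.\ showing directly that the translation $K^\pm_\ep$ expressing $k_\hatep$ in Fatou coordinates commutes with a non-linear $\Psi^{0,\infty}_\hatep$, hence has rational constant, hence vanishes by continuity), the uniformity in $\ep$ --- and with it the holomorphy of $h$ at $\ep=0$ and the whole implication (3)$\Rightarrow$(1) --- is unproved. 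A smaller remark: in (2)$\Rightarrow$(3) your claim that the real-valued $C(\ep)$ from the weak equivalence ``automatically'' continues to $\Opi$ with $\ol{C(\hatep)}=C(\hatepbar)$ also deserves an argument (real analyticity on the punctured real axis only gives extension near that axis; one should recover $C$ on all of $\Opi$ from the Fourier coefficients $c_n'(\hatep)=e^{2i\pi nC(\hatep)}c_n(\hatep)$ in the nontrivial case, the trivial case being vacuous), though the paper itself treats this implication briskly.
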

\begin{proof} We have seen that (1) implies (2) implies (3). Let us now show that (3) implies (1). 

Let ${\{f_{j,\ep}\}}_\ep$, $j=1,2$, be two families unfolding 
antiholomorphic parabolic germs of codimension~1 with the same strong modulus.
They induce families ${\{f_{j,\hatep}\}}_\hatep$ with
$\hatep\in \Opi$. 

We will give different arguments when the strong modulus is nontrivial and when it is trivial. 

\medskip

\noindent{\bf The strong modulus is nontrivial.} This means that $\Psi_{j,\hatep}^\infty$ is generically not a translation. 
We choose for each family ${\{f_{j,\ep}\}}_\ep$ a pair of normalized Fatou coordinates (see Definition~\ref{fatou_normalized}) so that the corresponding transition functions are equal: $\Psi_{1,\hatep}^{0,\infty}\equiv \Psi_{2,\hatep}^{0,\infty}$. Since the strong moduli are equal, the Lavaurs translations are equal. 

We define a change of coordinate $h_\hatep$ the same way 
we did for $\ep < 0$ in~\eqref{eq:def h ep}, namely by 
 \begin{equation}\label{eq:def h hatep}
    h_\hatep^\pm(z) = 
      (Z_\hatep^\pm)\inv \circ (\Phi_{2,\hatep}^\pm)\inv 
        \circ \Phi_{1,\hatep}^\pm \circ Z_\hatep^\pm(z),
  \end{equation}
   on two domains $S_\hatep^\pm$ covering $D(0,r)$. The domains are taken as in Figure~\ref{domain_S}. They are projections of domains $R_\hatep^\pm$ as in Figure~\ref{domain_R}. 
As in Theorem~\ref{theo:weak eq} we can show that $h_\hatep^+=h_\hatep^-$ on the intersection of their domains, yielding that $h_{\hatep}$ is well defined. Moreover, $f_{2,\hatep} = h_\hatepbar \circ f_{1,\hatep} 
\circ h_\hatep\inv$.  In particular this implies 
\begin{equation}
  g_{2,\hatep} = h_\hatep \circ g_{1,\hatep} 
  \circ h_\hatep\inv.\label{conj_g1_g2}
\end{equation}

\begin{figure}[htbp]
  \centering
  \includegraphics[scale=.8]{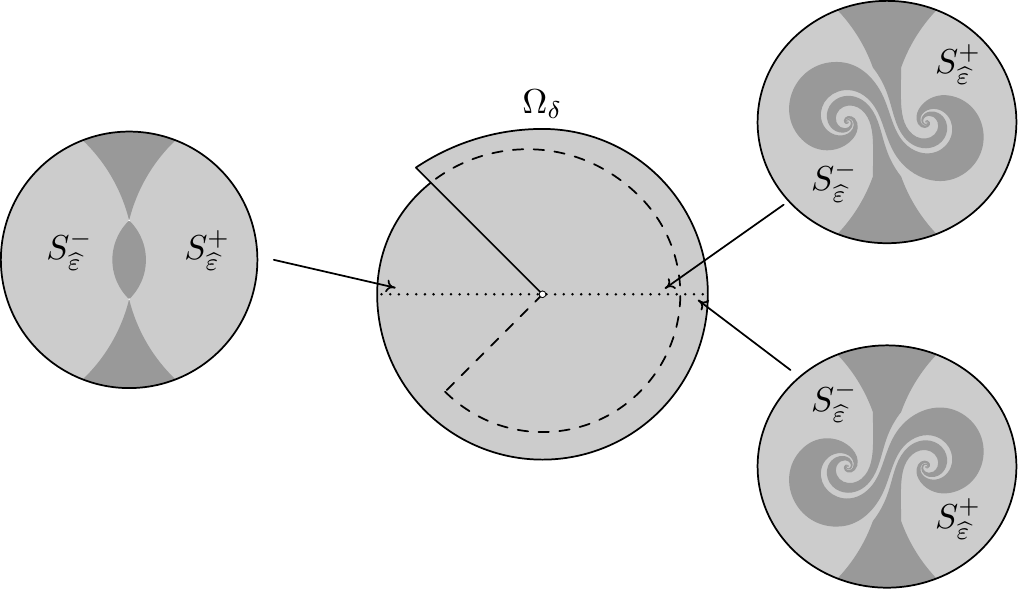}
  \caption{Sectors $S_\hatep^\pm$ for $\arg\hatep=\pi$ (on the left),
    $\arg\hatep = 0$ (on the upper right) and $\arg\hatep=2\pi$ (on the lower right).}
  \label{domain_S}
\end{figure}

\begin{figure}
  \begin{center} 
  \includegraphics{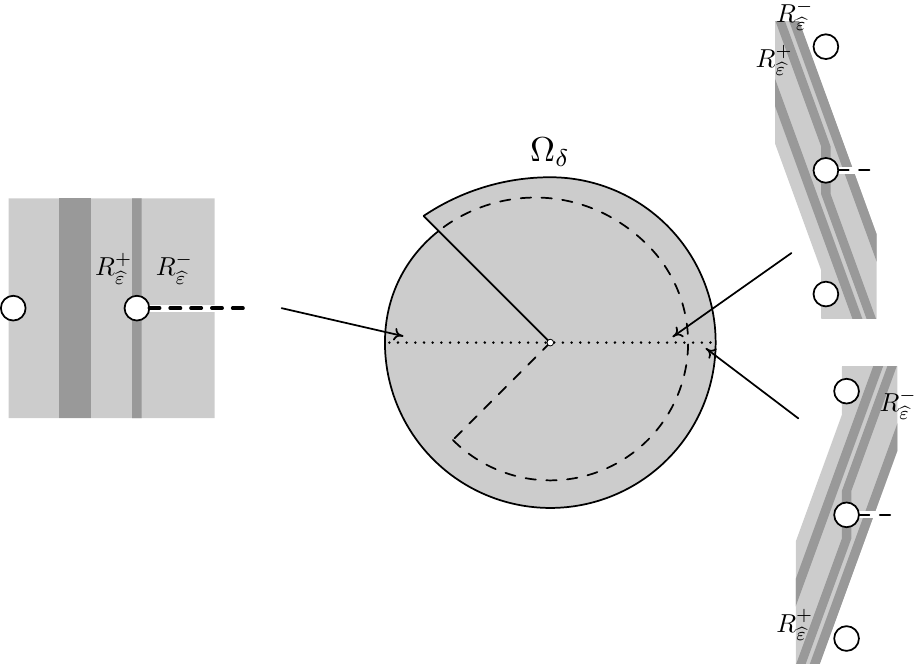}\par
  \caption{Domains of definition of return maps. The domains $R_\hatep^\pm$ projecting on the sectors $S_\hatep^\pm$ of Figure~\ref{domain_S}.}
  \label{domain_R}
  \end{center}
\end{figure}

We already know that $h_\hatep$ is holomorphic in $\hatep$ and we need to show that it is uniform in $\ep$. 
Recall that there is an overlapping region in $\Opi$.
On the sector $-\pi + \delta < \arg\hatep < \pi - \delta$,
both $\hatep$ and $e^{2i\pi}\hatep$ in $\Opi$ project on the same
point $\ep\in D(0,r')^\ast$. 
Let us prove that $h_\hatep = h_{e^{2i\pi}\hatep}$
for $-\pi + \delta < \arg\hatep < \pi - \delta$.

Let 
\begin{equation}k_\hatep=(h_{e^{2i\pi}\hatep})^{-1} \circ h_\hatep.\label{def:k}\end{equation}
Hence for $\hatep$ in the self-intersection 
of $ \Opi$ it follows from \eqref{conj_g1_g2} that 
\begin{equation}g_{1,\hatep}\circ k_\hatep =k_\hatep \circ g_{1,\hatep}.\label{eq:k}\end{equation}
Since $k_\hatep$ commutes with $g_{1\ep}$, by Lemma~\ref{lem:symmetries} below there exists a rational number $m$ such that $k_\hatep =g_{1,\hatep}^{\circ m}$. But $\lim_{\hatep\to 0}k_\hatep={\rm id}$. It follows that $m=0$ and $k_\hatep={\rm id}$.

\medskip

\noindent{\bf The strong modulus is trivial.} Then all $\Psi_{j,\hatep}^{0,\infty}$ are translations. In particular, for all normalized Fatou coordinates of 
${\{\Phi_{j,\hatep}^\pm\}}_\hatep$
of $f_{j,\hatep}$ 
we have that $\Psi_{1,\hatep}^{0,\infty}\equiv \Psi_{2,\hatep}^{0,\infty}$. 
We choose strongly normalized Fatou coordinates (see Definition~\ref{def:normalized fatou}).

We use the change of coordinate $h_\hatep$ defined in \eqref{eq:def h hatep}, which yields \eqref{conj_g1_g2}.
For $-\pi + \delta < \arg\hatep < \pi - \delta$, we define again for $k_\ep$ as in \eqref{def:k}, which satisfies \eqref{eq:k}. 
Let $K_\ep^\pm$ the expression of $k_\hatep$ in the Fatou coordinates.
In these coordinates $K_\ep^\pm$ is a translation, that must commute with the transition maps. Moreover, $K_\ep^+(0)=0$ because of the strong normalization of the Fatou coordinates. Hence $K_\ep^+=id$, which yields that $k_\hatep=id$ on the corresponding domain in $z$-space, and then on the whole disk by analytic continuation.
\end{proof}

\begin{lem}\label{lem:symmetries}
Let $g_\ep$ be a generic family unfolding a holomorphic parabolic germ, and $\{(\Psi_\hatep^0,\Psi_\hatep^\infty)\}_{\hatep\in \Opi}$
 be a family of transition  functions  of $g_\eps$. Let $\Omega$ be a connected subset of parameter space with at least one accumulation point, and let $\{h_\ep\}_{\ep\in \Omega}$ be an analytic family of diffeomorphims over $D(0,r)$ commuting with $g_\ep$.  If at least one of $\Psi_\hatep^0$ or $\Psi_\hatep^\infty$ is not a translation for at least
 one $\ep\in\Omega$, then there exists a rational number $m$ such that $h_\ep=g_\ep^{m}$ for all $\ep\in \Omega$.
\end{lem}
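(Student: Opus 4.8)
The plan is to work in the Fatou coordinates and use the rigidity of maps commuting with the normal form $T_1$. First I would lift $h_\ep$ to the translation domains: on each $U_\hatep^\pm$ the map $h_\hatep$ conjugates $g_\hatep$ to itself, so in a Fatou coordinate $\Phi_\hatep^\pm$ it becomes a map $H_\hatep^\pm$ commuting with $T_1$. Since $H_\hatep^\pm$ commutes with $T_1$ it has a Fourier expansion $H_\hatep^\pm(W) = W + d_0^\pm(\hatep) + \sum_{n} d_n^\pm(\hatep) e^{2i\pi n W}$; the boundary behaviour at the ends of the translation domain (boundedness near the appropriate hole, as for the transition functions in Proposition~\ref{prop:prop analytiques}) forces the one-sided tails to vanish, so in fact $H_\hatep^\pm$ is itself a translation $T_{d_0^\pm(\hatep)}$ on each side --- here I am using that $h_\hatep$ extends across the fixed points and is a genuine diffeomorphism, so its lift cannot have an essential singularity at the holes. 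Thus on each sheet $h_\hatep$ acts as $\Phi_\hatep^\pm$-conjugate of a translation by $d_0^\pm(\hatep)$, i.e.\ as an iterate of $g_\hatep$ of a possibly non-integer "time" $d_0^\pm(\hatep)$.

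Next I would use compatibility with the transition functions to pin down $d_0^\pm$. The map $h_\hatep$ being globally well-defined on $D(0,r)$ means the two translations must intertwine correctly through $\Psi_\hatep^0$ and $\Psi_\hatep^\infty$: we get $T_{d_0^-(\hatep)} \circ \Psi_\hatep^{0,\infty} = \Psi_\hatep^{0,\infty} \circ T_{d_0^+(\hatep)}$. Comparing constant terms gives $d_0^- = d_0^+ =: d(\hatep)$, and then, if some $\Psi_\hatep^{0,\infty}$ has a nonzero Fourier coefficient $c_n$ with $n \neq 0$, comparing the $e^{2i\pi n W}$ coefficients yields $c_n \, e^{-2i\pi n\, d(\hatep)} = c_n$, hence $n\, d(\hatep) \in \Z$, so $d(\hatep)$ is rational with bounded denominator. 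The hypothesis that at least one transition function is not a translation for at least one $\ep\in\Omega$ provides such an $n$ at that parameter; by analyticity of $d$ on the connected set $\Omega$ (which follows from analyticity of $h_\ep$, via the analytic dependence of the Fatou coordinates), a rational-valued analytic function is constant, so $d(\hatep) \equiv m \in \Q$ on all of $\Omega$. Finally, unwinding the definitions, $h_\ep = \Phi_\ep^{-1}\circ T_{m} \circ \Phi_\ep = g_\ep^{\circ m}$ (the fractional iterate being well-defined precisely because $T_m$ descends through the transition structure), which is the claim.

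The step I expect to be the main obstacle is the passage from "$H_\hatep^\pm$ commutes with $T_1$" to "$H_\hatep^\pm$ is a translation", i.e.\ killing the non-constant Fourier modes on each sheet. On a single translation domain one only controls the behaviour at one of the two ends (say $\Im W \to +\infty$), which kills the negative modes but not the positive ones; one needs to invoke that $h_\ep$ is defined and bounded near \emph{both} fixed points and combine the estimates coming from the two charts $U_\hatep^+$ and $U_\hatep^-$ --- essentially the same "Klein bottle / sphere rigidity" argument used in the proof of Theorem~\ref{theo:coord fatou}, applied to the quotient orbit space on which $h_\ep$ descends to a holomorphic automorphism. Care is also needed for $\ep=0$ and along the boundary $\arg\hatep = \pi, \,-\pi+\delta,\,3\pi-\delta$ of $\Opi$, where the translation domains degenerate; there one falls back on the classical holomorphic parabolic case from \cite{germeDeploie}, where this rigidity statement is already available, and uses continuity in $\hatep$.
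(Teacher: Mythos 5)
Your plan follows essentially the same route as the paper: lift $h_\ep$ to the Fatou coordinates, show the lift is a translation $T_{m^\pm_\hatep}$, use the intertwining relation with $\Psi_\hatep^{0,\infty}$ to get $m^+_\hatep=m^-_\hatep$ and, from a nonvanishing Fourier coefficient, $n\,m_\hatep\in\Z$, then conclude constancy of $m_\hatep$ on the connected set $\Omega$ and hence $h_\ep=g_\ep^{\circ m}$. The step you flag as the main obstacle (killing the non-constant Fourier modes) is exactly the point the paper treats as immediate, with the intended justification being the orbit-space rigidity argument of Theorem~\ref{theo:coord fatou} that you invoke, so your proposal is correct and matches the paper's proof.
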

\begin{rem} If $\Psi_\hatep^\infty$ or $\Psi_\hatep^0$ is not
  a translation for at least one $\ep\in\Omega$, then by the
  identity principal, it is not a translation over all $\Omega$,
  except perhaps on a discrete set of points.
\end{rem}
\begin{proof}
Let $$H_\ep^\pm= \Phi_\ep^\pm \circ Z_\ep^\pm \circ h_\ep\circ(Z_\ep^\pm)\inv\circ (\Phi_\ep^\pm)\inv.$$ 
Then $H_\ep^\pm$ commutes with $T_1$ over $U^\pm$, which means that $H_\ep^\pm$ is a translation $T_{m_\hatep^\pm}$, where $m_\hatep^\pm$ depends analytically on $\eps\in \Omega$. Moreover, we need have $$\Psi_\hatep^{0,\infty}\circ T_{m_\hatep^+} = T_{m_\hatep^-}\circ \Psi_\hatep^{0,\infty}.$$ From the form \eqref{eq:serie PsiInf hatep}
 of $\Psi_\hatep^{\infty}$ and the similar form for $\Psi_\hatep^0$, it follows that $m_\hatep^+=m_\hatep^-:=m_\hatep$. Moreover, if one of $\Psi_\hatep^0$ or $\Psi_\hatep^\infty$ is not a translation, then $m_\hatep$ is a rational number for almost all $\ep\in \Omega$, yielding that $m_\hatep$ is constant. \end{proof}

\section{Applications and Realisation}\label{sec:app}

\subsection{Antiholomorphic Square Root}
%\subsubsection{Existence of antiholomorphic square root of a parabolic holomorphic germ}

\begin{theo}[Antiholomorphic Square-Root]\label{thm:square_root}
  Let $g_\ep$ be an unfolding of a holomorphic parabolic germ of
  codimension~1 depending holomorphically on the complex parameter $\ep$. 
  There exists a family $\{f_\ep\}_{\ep}$ depending anti-holomorphically 
  of $\ep$ such that 
  \begin{equation}f_\epbar \circ f_\ep = g_\ep\label{def:g_f_anti}\end{equation} if and only if there exists families ${\{\Psi_\hatep^{0,\infty}\}}_\hatep$ of transition functions of $g_\ep$ such that
    for all $\hatep\in \Opi$
  \begin{equation}\label{eq:Psi racine antihol}
    \STt \circ \Psi_\hatep^\infty = \Psi_\hatepbar^0 \circ \STt.
  \end{equation}
  In particular, if $\ep$ is real, then $f_\ep\circ f_\ep= g_\ep$. 
\end{theo}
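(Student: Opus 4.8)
The plan is to build an explicit dictionary between Fatou coordinates of $g_\ep$ and those of a putative square root, recognising \eqref{eq:Psi racine antihol} as precisely the cocycle identity that makes such a square root exist. For the necessity, suppose $f_\ep$ is a family, antiholomorphic in $z$ and in $\ep$, with $f_\epbar\circ f_\ep=g_\ep$. Then $f_0$ is an antiholomorphic parabolic germ of codimension~$1$ and $f_\ep$ a generic unfolding of it, so by Proposition~\ref{prop:fatou ep complexe} it carries a family of Fatou coordinates $\{\Phi_\hatep^\pm\}_{\hatep\in\Opi}$ satisfying \eqref{eq:F STt}. Writing \eqref{eq:F STt} at $\hatep$ and at $\hatepbar$ and composing — using that $\hatep\mapsto\hatepbar$ is an involution of $\Opi$ and that $\STt\circ\STt=T_1$ (because $\sigma$ and $\Tt$ commute) — yields $\Phi_\hatep^\pm\circ G_\hatep\circ(\Phi_\hatep^\pm)\inv=T_1$; hence the $\Phi_\hatep^\pm$ are also Fatou coordinates of $g_\ep$, the transition functions $\Psi_\hatep^{0,\infty}$ attached to them by Proposition~\ref{prop:continuation Psi} are transition functions of $g_\ep$, and they satisfy \eqref{eq:Psi racine antihol} since that is exactly \eqref{eq:Psi commute hatep}.

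For the sufficiency, assume conversely that $g_\ep$ has transition functions $\Psi_\hatep^{0,\infty}$ obeying \eqref{eq:Psi racine antihol}, realised by a family $\{\Phi_\hatep^\pm\}_{\hatep\in\Opi}$ of Fatou coordinates of $g_\ep$ that is holomorphic in $\hatep\neq0$ with continuous limit at $\hatep=0$ (strongly normalised, in the sense of Definition~\ref{def:normalized fatou}, when all transition functions are translations). Define on the translation domains
\[
  F_\hatep^\pm:=(\Phi_\hatepbar^\pm)\inv\circ\STt\circ\Phi_\hatep^\pm,
  \qquad
  f_\hatep^\pm:=(Z_\hatepbar^\pm)\inv\circ F_\hatep^\pm\circ Z_\hatep^\pm.
\]
The crucial point is that $f_\hatep^+=f_\hatep^-$ where both are defined. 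On the component above the fundamental hole one has $\Phi_\hatep^-\circ T_{-i\pi b}\circ(\Phi_\hatep^+)\inv=\Psi_\hatep^\infty$, while for the conjugate parameter $\Sigma$ has moved us below the hole, so $\Phi_\hatepbar^-\circ T_{i\pi b}\circ(\Phi_\hatepbar^+)\inv=\Psi_\hatepbar^0$; inserting these into $F_\hatep^-$ and cancelling by means of $(\Psi_\hatepbar^0)\inv\circ\STt\circ\Psi_\hatep^\infty=\STt$, i.e. \eqref{eq:Psi racine antihol}, gives $F_\hatep^-=F_\hatep^+$ there, by the antiholomorphic analogue of computation~\eqref{eq:calcul h pm}. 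The component below the hole is symmetric, and on the middle component appearing for $\arg\hatep=\pi$ one uses that the Lavaurs translation $T^L=T_{-i\pi/\sqrt\ep}$ commutes with $T_1$ and is compatible with $\STt$. Hence $f_\hatep:=f_\hatep^\pm$ is a well-defined antiholomorphic germ, bounded, so extending across $\pm\sqrt\ep$; and $F_\hatepbar^\pm\circ F_\hatep^\pm=(\Phi_\hatep^\pm)\inv\circ(\STt\circ\STt)\circ\Phi_\hatep^\pm=(\Phi_\hatep^\pm)\inv\circ T_1\circ\Phi_\hatep^\pm=G_\hatep$, so $f_\hatepbar\circ f_\hatep=g_\hatep$. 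Since $\STt$ is antiholomorphic while $\Phi_\hatep^\pm,Z_\hatep^\pm$ are holomorphic in $(z,\hatep)$ and $\Phi_\hatepbar^\pm,Z_\hatepbar^\pm$ are holomorphic in $\hatepbar$, the germ $f_\hatep$ is antiholomorphic in $z$ and in the parameter.

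It remains to descend from $\Opi$ to a genuine parameter disc, i.e. to show $f_\hatep=f_{e^{2i\pi}\hatep}$ on the self-overlap $-\pi+\delta<\arg\hatep<\pi-\delta$. Put $k_\hatep:=f_{e^{2i\pi}\hatep}\inv\circ f_\hatep$. Replacing $\hatep$ by $\hatepbar$ in $f_\hatepbar\circ f_\hatep=g_\hatep$ gives $f_\hatep\circ f_\hatepbar=g_\hatepbar$; combining the two identities on both sheets (and using that $g$ is single-valued in $\ep$) one checks that $k_\hatep$ commutes with $g_\ep$. Since the four families $\Phi_\hatep^\pm,\Phi_\hatepbar^\pm,\Phi_{e^{2i\pi}\hatep}^\pm,\Phi_{\overline{e^{2i\pi}\hatep}}^\pm$ all tend to the common limit $\Phi_0^\pm$ as $\hatep\to0$, we have $k_\hatep\to{\rm id}$. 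Arguing exactly as in the proof of Theorem~\ref{theo:strong class}, in the nontrivial case Lemma~\ref{lem:symmetries} forces $k_\hatep=g_\ep^{\circ m}$ with $m$ rational, hence $m=0$ and $k_\hatep={\rm id}$, while in the trivial case the strong normalisation pins the base point of the translation representing $k_\hatep$, again giving $k_\hatep={\rm id}$. This produces the single-valued family $\{f_\ep\}_\ep$; and for real $\ep$ the conjugate $\overline\ep$ equals $\ep$, so $f_\epbar=f_\ep$ and $f_\ep\circ f_\ep=g_\ep$. The main obstacle is this last step: the gluing identity drops out once \eqref{eq:Psi racine antihol} is recognised as the right cocycle, but ruling out monodromy of $f_\hatep$ around $\ep=0$ requires re-invoking the rigidity of Lemma~\ref{lem:symmetries} and the nontrivial/trivial dichotomy of the Strong Classification Theorem.
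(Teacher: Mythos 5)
Your proposal is correct and follows essentially the same route as the paper: the same construction $f_\hatep^\pm=(Z_\hatepbar^\pm)\inv\circ(\Phi_\hatepbar^\pm)\inv\circ\STt\circ\Phi_\hatep^\pm\circ Z_\hatep^\pm$ from Fatou coordinates of $g_\ep$, the same gluing of $f_\hatep^+$ and $f_\hatep^-$ via \eqref{eq:Psi racine antihol}, and the same descent to a single-valued family through $k_\hatep$ and Lemma~\ref{lem:symmetries}. The only (harmless) divergences are that you check the commutation of $k_\hatep$ with $g_\ep$ on the whole overlap, where the paper establishes it at $\arg\hatep=0$ and then extends $\gamma_\ep=\mathrm{id}$ by the identity principle, and that you dispose of the linear case via strongly normalized Fatou coordinates where the paper simply declares it trivial.
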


\begin{proof}
  If $f_\ep$ exists, then we must have~\eqref{eq:Psi racine antihol}.

  Now suppose~\eqref{eq:Psi racine antihol} is true. We can suppose
  that $\Psi_\hatep^\infty$ is non-linear, since the linear case is trivial.
  Let  ${\{\Phi_\hatep\}}_\hatep$ be normalized Fatou coordinates of
  $g_\ep$ associated to the transition functions. Let $S_\hatep^\pm$ be
  as in the proof of Theorem~\ref{theo:strong class}
 (Section~\ref{sec:proof strong class}), see 
  Figure~\ref{domain_S}. We define $f^\pm_\hatep$ by   
  $$f^\pm_\hatep=(Z_\hatepbar^\pm)\inv\circ (\Phi_\hatepbar^\pm)\inv \circ \STt \circ \Phi_\hatep^\pm\circ Z_\hatep^\pm.$$
 Then
$f_\hatep^\pm$ is anti-holomorphic on ${S}_\hatep^\pm$.
  On the intersection
  of ${S}_\hatep^+ \cap {S}_\hatep^-$, we have that $f_\hatep^+ \circ (f_\hatep^-)\inv = id$,
  because of~\eqref{eq:Psi racine antihol}. 
    Indeed, the argument is the same as in the proof of
 Theorem~\ref{theo:weak eq}. 

  Moreover, $f_\hatep$
  is defined so that $g_\ep = f_\hatepbar \circ f_\hatep$.
  Let $\gamma_\ep = f_{\hatep e^{2i\pi}}\inv 
  \circ f_\hatep$. 
  When $\arg\hatep=0$, then $g_\ep= f_\hatepbar \circ f_\hatep = f_\hatep \circ f_\hatepbar$ and $f_\hatepbar=f_{\hatep e^{2i\pi}}$. Hence $\gamma_\ep$ commutes with $g_\ep$. Since $\Psi_\hatep^\infty$ is non linear 
  it follows by Lemma~\ref{lem:symmetries} that $\gamma_\ep = g_\eps^{\circ m}$ for some rational $m$,
  and since $\gamma_\ep \to {\rm id}$ when $\ep\to 0$, we have $\gamma_\ep = {\rm id}$ for $\arg\hatep=0$. 
  By the identity principle it follows that $\gamma_\ep={\rm id}$ for $-\pi+\delta < \arg\hatep < \pi-\delta$.
  We conclude that $f_\hatep$ is uniform in $\ep$.
  Since $f_\ep$ is anti-holomorphic in $\ep$, it follows that 
  it is real analytic in $\ep$ for $\ep$ real.
\end{proof}

A necessary condition for a holomorphic parabolic germ $g_0$ to have an antiholomorphic square root  is that $b\in\mathbb R$ and that there exist Fatou coordinates for which the transition functions $\Psi_0^0$ and $\Psi_0^\infty$ are linked by the equation
\begin{equation}\Psi_0^0\circ \Sigma\circ T_{\frac12} = \Sigma\circ T_{\frac12}\circ \Psi_0^\infty. \label{relation_Psi}\end{equation}  The latter is a condition of infinite codimension. 
Now that we have studied the unfolding we can explain this condition. Indeed, when we perturb $g_0$ to some $g_\eps$ with $b(\eps)\in \mathbb R$, we can see that the dynamics near the fixed points $\pm\sqrt{\ep}$ is given by the first return maps. The interesting values of $\ep$ are the ones for which the multiplier has modulus $1$ of the form $\exp(2\pi i \beta)$. Indeed, in that case the singular points are generically nonlinearizable as soon as $\beta$ is either rational, or does not satisfy Bruno condition. When $\beta$ is rational and the fixed point is not linearizable, then the first return map 
has a formal invariant and a functional modulus. And when $\beta$ is irrational there may be, as described by Yoccoz and Perez-Marco, accumulation of periodic points or hedgehog dynamics in the neighborhood of the fixed point. 

Suppose now that the unfolding $g_\ep$ is of the form $f_\ep^{\circ 2}$ for some antiholomorphic parabolic unfolding $f_\ep$. Then, when the product of the multipliers at the fixed points of $g_\ep$ has modulus $1$ and is different from $1$, the two fixed points form a periodic orbit of period $2$ of $f_\ep$. This means that the first return maps in the neighborhood of the two fixed points must be conjugate for all values of $\ep$. For instance, just considering the rational values of $\beta$, the first return maps must have the same codimension and the same formal invariants, as well as the same analytic parts of the modulus. This is obviously a very strong condition and it should be no surprise that this is not possible when \eqref{relation_Psi} is not satisfied. 

\subsection{Realisation}\label{sec:realisation}

A bonus of Theorem~\ref{thm:square_root} is to provide a necessary and sufficient condition for a strong modulus \begin{equation}(\ep, b, [\{\Psi_\hatep^\infty\}_{\hatep\in \Opi}]).\label{modulus_antiholomorphic}\end{equation}
 to be realised as the strong modulus of a generic family $f_\ep$ unfolding an antiholomorphic parabolic germ.  As a comparison, the modulus of a germ of generic analytic family unfolding a holomorphic parabolic point has the form
\begin{equation}(\ep, b,  [\{(\Psi_\hatep^0,\Psi_\hatep^\infty)\}_{\hatep\in \Opi}]).\label{modulus_parabolic}\end{equation}
The realisation problem was solved for such germs (see \cite{realisation}). Hence, taking a triple of the form \eqref{modulus_antiholomorphic}, we extend it to a triple of the form \eqref{modulus_parabolic} using \eqref{eq:Psi racine antihol}. Then the triple \eqref{modulus_antiholomorphic} is realisable as an antiholomorphic germ of family if and only if the extended triple \eqref{modulus_parabolic} is realisable as a holomorphic germ of family, which has a square root. 
The latter is the case if and only if the extended triple satisfies some \emph{compatibility condition}. This condition states that the dynamics given by $(\Psi_\hatep^0,\Psi_\hatep^\infty)$ and by $(\Psi_{\hatep e^{2i\pi}}^0,\Psi_{\hatep e^{2i\pi}}^\infty)$ are conjugate. It takes a special form in the case of antiholomorphic germs of families that we now state. 
\bigskip

\noindent{\bf The compatibility condition in the antiholomorphic case.} We have seen that for $\ep>0$ the dynamics can be described by the \emph{Glutsyuk modulus} $\Psi_\ep^G$, where we bring the family to the normal form in the neighborhood of each fixed point through the Glutsyuk Fatou coordinates and we compare the normalizations. This description can of course be extended in the whole overlapping part of the projection of $\Opi$ in $\ep$-space.
In the Glutsyuk Fatou coordinates, the first return maps (or their inverses) are simply given by $T_{-\alpha_\hatep^\pm}$, where $\alpha_\hatep^\pm$ is defined as 
\begin{equation}\label{eq:periode_hat}
  \alpha_\hatep^\pm := \int\limits_{{\gamma_\pm}} {1 + b(\ep)\zeta \over \zeta^2 - \ep}\d\zeta
    = \pm {i\pi\over \sqrt{\hatep}} + i\pi b(\ep),
\end{equation} where $\gamma_\pm$ is a small loop surrounding $\pm \sqrt{\hatep}$ in the positive direction. 
Note that 
\begin{equation} \begin{cases}\alpha_{\hatep e^{2i\pi}}^\pm = \alpha_\hatep^\mp,\\[2\jot]
\Sigma(\alpha_{\hatepbar}^\pm) =- \alpha_\hatep^\mp.\end{cases}\label{properties:alpha}\end{equation}

We compare the Glutsyuk Fatou coordinates obtained through straightening the first return maps over the principal holes (the hatched regions in Figure~\ref{fig:domains_comparison}). This in turn forces the choice of  the chosen first return maps (or their inverses) around both singular points. For all $\hatep$, $\Psi_\hatep^\infty$ (resp. $\Psi_\hatep^0$) is associated to $\sqrt{\hatep}$ (resp. $-\sqrt{\hatep}$).

\begin{figure}
  \begin{center} 
  \includegraphics{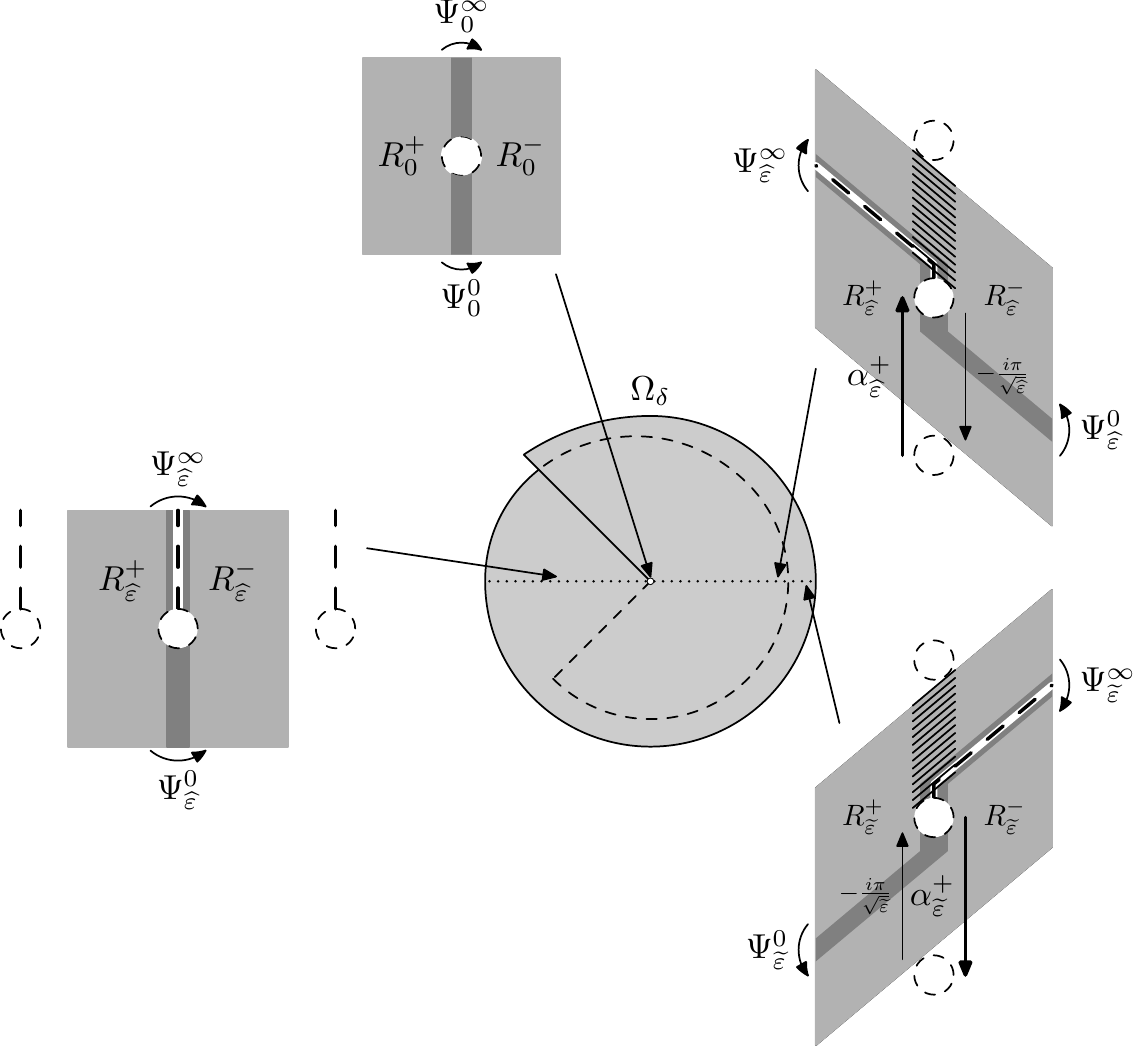}
  \caption{Domains of definition of return maps. }
  \label{fig:domains_comparison}
  \end{center}
\end{figure}

When $\arg\hatep\in(-\pi+\delta, \pi-\delta)$ we use the return maps $\Psi^0_\hatep\circ T_{-\frac{i\pi}{\sqrt{\hatep}}}$ and $\Psi^\infty_\hatep\circ T_{-\frac{i\pi}{\sqrt{\hatep}}}$, while for $\arg\hatep\in(\pi+\delta, 3\pi-\delta)$
we use $T_{-\frac{i\pi}{\sqrt{\hatep}}}\circ \Psi^0_\hatep$ and $T_{-\frac{i\pi}{\sqrt{\hatep}}}\circ \Psi^\infty_\hatep$.

\begin{theo}[Compatibility Condition \cite{realisation}] We consider a germ of generic analytic family $g_\ep$ unfolding a holomorphic parabolic germ and its modulus $(\ep, b, [\{(\Psi_\hatep^0,\allowbreak \Psi_\hatep^\infty)\}_{\hatep\in \Opi}])$.
Let us denote $\wep= \hatep e^{2i\pi}$. For $\arg\hatep\in(-\pi+\delta, \pi-\delta)$, let $H_\hatep^0$, $H_\hatep^\infty$, $H_{\wep}^0$, $H_{\wep}^\infty$ be defined as follows
\begin{align}\begin{split}
&H_\hatep^0\circ \Psi^0_\hatep\circ T_{-\frac{i\pi}{\sqrt{\hatep}}}= T_{\alpha_\hatep^-}\circ H_\hatep^0,\\
&H_\hatep^\infty\circ \Psi^\infty_\hatep\circ T_{-\frac{i\pi}{\sqrt{\hatep}}}= T_{-\alpha_\hatep^+}\circ H_\hatep^\infty,\\
&H_{\wep}^0\circ T_{-\frac{i\pi}{\sqrt{{\wep}}}}\circ\Psi^0_{\wep}= T_{\alpha_{\wep}^-}\circ H_{\wep}^0,\\
&H_{\wep}^\infty\circ T_{-\frac{i\pi}{\sqrt{{\wep}}}}\circ \Psi^\infty_{\wep}= T_{-\alpha_{\wep}^+}\circ H_{\wep}^\infty,\end{split}\label{def:H}\end{align}
and uniquely determined by
\begin{align}\begin{split}
&\lim_{\Im W\to+\infty} (H_\hatep^\infty-{\rm id})=0,\\
&\lim_{\Im W\to+\infty} (H_\wep^\infty-{\rm id})=0,\\
&\lim_{\Im W\to-\infty} (H_\hatep^0-{\rm id})=0,\\
&\lim_{\Im W\to-\infty} (H_\wep^0-{\rm id})=0.\label{limit_H}
\end{split}\end{align}
Then there exist $D_\ep, D_\ep'$ for $\arg\ep\in(-\pi+\delta, \pi-\delta)$ such that the following \emph{compatibility condition} is satisfied
\begin{equation} H_{\wep}^\infty\circ (H_{\wep}^0)\inv= T_{D_\ep}\circ H_\hatep^0\circ (H_\hatep^\infty)\inv\circ T_{D_\ep'}.\label{compatibility}\end{equation}
\end{theo}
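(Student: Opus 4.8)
\emph{The plan.} The idea is to recognise the four maps $H_\hatep^0,H_\hatep^\infty,H_\wep^0,H_\wep^\infty$ as (restrictions of) Fatou coordinates of $g_\ep$ near the two fixed points $\pm\sqrt{\hatep}$, and then to exploit the fact that, although $\hatep$ and $\wep=\hatep e^{2i\pi}$ carry two different unfoldings of the \'Ecalle--Voronin data, they describe the \emph{same} germ $g_\ep$; hence the normalisations they produce near each fixed point must coincide up to a translation, which is exactly~\eqref{compatibility}. First I would settle existence and uniqueness of the $H$'s. Using the normalisations $c_0^\infty=-i\pi b$, $c_0^0=i\pi b$ and the expansion~\eqref{eq:serie PsiInf hatep}, each of $\Psi_\hatep^{0,\infty}\circ T_{-i\pi/\sqrt{\hatep}}$ and $T_{-i\pi/\sqrt{\wep}}\circ\Psi_\wep^{0,\infty}$ is, in the Fatou variable $W$, defined on an upper (resp.\ lower) half-plane, commutes with $T_1$, and is exponentially close to one of the translations $T_{\alpha_\hatep^-},T_{-\alpha_\hatep^+},T_{\alpha_\wep^-},T_{-\alpha_\wep^+}$. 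Since $\alpha_\hatep^\pm=i\pi(\pm1/\sqrt{\hatep}+b)$ and $\arg\hatep\in(-\pi+\delta,\pi-\delta)$, the quantity $\Im\alpha_\hatep^\pm=\pi\,\Re(\pm1/\sqrt{\hatep}+b)$ is bounded away from $0$ for $|\hatep|$ small, so passing to the exponential chart $w=e^{\pm2i\pi W}$ turns the relevant end into a \emph{hyperbolic} fixed point; Koenigs' theorem yields the unique linearising coordinate tangent to the identity, which pulls back to the unique $H$ satisfying~\eqref{limit_H}. Analytic dependence on $\ep$ is inherited from that of $\Psi_\hatep^{0,\infty}$ and $b$.

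Next I would fix Fatou coordinates $\Phi_\hatep^+$ and $\Phi_\wep^+$ of $g_\ep$ used to build the transition functions, and set $\Xi_\hatep^+:=H_\hatep^\infty\circ\Phi_\hatep^+$, $\Xi_\hatep^-:=H_\hatep^0\circ\Phi_\hatep^+$, $\Xi_\wep^+:=H_\wep^\infty\circ\Phi_\wep^+$, $\Xi_\wep^-:=H_\wep^0\circ\Phi_\wep^+$. Because the return maps commute with $T_1$, uniqueness from the first step forces $T_{-1}\circ H\circ T_1=H$ for each $H$; hence each $\Xi$ is again a Fatou coordinate of $g_\ep$, now on a crescent around a single fixed point, which moreover straightens the corresponding first return map to a model translation. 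Cancelling $\Phi_\hatep^+$ (resp.\ $\Phi_\wep^+$) gives $H_\hatep^0\circ(H_\hatep^\infty)\inv=\Xi_\hatep^-\circ(\Xi_\hatep^+)\inv$ and $H_\wep^\infty\circ(H_\wep^0)\inv=\Xi_\wep^+\circ(\Xi_\wep^-)\inv$. The crucial bookkeeping is the swap $\sqrt{\wep}=-\sqrt{\hatep}$, together with $\alpha_\wep^\pm=\alpha_\hatep^\mp$ and the convention that $\Psi^\infty$ is attached to $\sqrt{\cdot}$ and $\Psi^0$ to $-\sqrt{\cdot}$: it shows that $\Xi_\hatep^+$ and $\Xi_\wep^-$ are \emph{both} Fatou coordinates of $g_\ep$ near the \emph{same} point $\sqrt{\hatep}$ — normalised there by~\eqref{limit_H} and straightening its first return map (one of them to the inverse translation) — while $\Xi_\hatep^-$ and $\Xi_\wep^+$ are both attached in the same way to $-\sqrt{\hatep}$.

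Then I would invoke uniqueness of Fatou coordinates of $g_\ep$ near a fixed point up to an additive constant: applied near $\sqrt{\hatep}$, where both $\Xi_\hatep^+$ and $\Xi_\wep^-$ are asymptotic to Fatou coordinates of $g_\ep$, it gives $\Xi_\wep^-=T_{a(\ep)}\circ\Xi_\hatep^+$; applied near $-\sqrt{\hatep}$ it gives $\Xi_\wep^+=T_{b(\ep)}\circ\Xi_\hatep^-$; and these relations propagate over the whole common domains, with $a,b$ depending analytically on $\ep$ for $\arg\ep\in(-\pi+\delta,\pi-\delta)$. Substituting,
\[
  H_\wep^\infty\circ(H_\wep^0)\inv
    =\Xi_\wep^+\circ(\Xi_\wep^-)\inv
    =T_{b(\ep)}\circ\Xi_\hatep^-\circ(\Xi_\hatep^+)\inv\circ T_{-a(\ep)}
    =T_{b(\ep)}\circ\bigl(H_\hatep^0\circ(H_\hatep^\infty)\inv\bigr)\circ T_{-a(\ep)},
\]
which is~\eqref{compatibility} with $D_\ep=b(\ep)$ and $D_\ep'=-a(\ep)$.

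The main obstacle is the uniqueness step just invoked, i.e.\ showing that the Glutsyuk normalisation near a fixed point is genuinely intrinsic to $g_\ep$. The two crescents around a given fixed point are carved out of the \emph{different} translation domains $U_\hatep$ and $U_\wep$, which are glued around $0$ in the two inequivalent ways of Figure~\ref{fig:ep rotation}; one must control this geometry well enough to know that the crescents overlap in a region meeting every orbit near the fixed point and to exclude a jump of the comparison constant between the (possibly two) components of that overlap — this is where the asymptotics of $H$ near the fixed point (the limits~\eqref{limit_H}) do the work. A secondary technical point is running the exponential-chart/Koenigs argument of the first step uniformly in $\hatep\in\Opi$, but the estimate $\Im\alpha_\hatep^\pm\neq0$ away from $\arg\hatep=\pi$ makes the potentially exceptional parameters disappear in the region of interest.
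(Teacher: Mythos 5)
Your reconstruction is correct and follows essentially the route intended here: the paper itself does not prove this statement (it is imported from \cite{realisation}), but its surrounding discussion describes exactly your mechanism, namely reading the maps $H_\hatep^{0,\infty}$, $H_\wep^{0,\infty}$ as Glutsyuk-type normalizations (linearizers, via the exponential chart, of the first return maps or their inverses at the hyperbolic fixed points $\pm\sqrt{\hatep}$ of the single germ $g_\ep$), noting that $\sqrt{\wep}=-\sqrt{\hatep}$ and $\alpha_\wep^\pm=\alpha_\hatep^\mp$ pair them two by two at the same fixed point, and invoking uniqueness of these normalizations up to translation to compare the $\hatep$- and $\wep$-presentations of the same dynamics. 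Your flagged orientation and domain-overlap points are precisely the bookkeeping handled in \cite{realisation}, and your treatment of them (normalization \eqref{limit_H} plus Koenigs uniqueness near each fixed point) is the standard and correct way to close them.
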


\begin{theo}[Realisation Theorem \cite{realisation}]\label{theo:real_hol} Let be given a family $(\ep, b, [\{(\Psi_\hatep^\infty, \allowbreak \Psi_\hatep^0)\}_{\hatep\in \Opi}])$, where $b$ is an analytic function of $\ep$ defined on the projection of $\Opi$ and such that $\Psi_\hatep^{0,\infty}$ depend analytically on $\hatep\in \Opi$ with continuous limit when $\hatep\to 0$. Let $H_\hatep^0$, $H_\hatep^\infty$, $H_{\wep}^0$, $H_{\wep}^\infty$ be defined in \eqref{def:H} and satifying the compatibility condition \eqref{compatibility} together with \eqref{limit_H}. Then there exists a germ of generic analytic family $g_\ep$ unfolding a holomorphic parabolic germ realizing this modulus. 
\end{theo}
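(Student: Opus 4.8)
The plan is to reconstruct the family $g_\ep$ from its prescribed orbit space by re-gluing the charts of the model vector field according to the given transition functions, and then to use the compatibility condition to descend from the universal cover $\Opi$ to the genuine parameter $\ep$. The construction is entirely parallel to the one used for the weak/strong classification theorems, run in reverse.

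First I would fix the model: the vector field $v_\ep$ of \eqref{vector_field} with the given function $b(\ep)$, its time coordinates $Z_\hatep^\pm$, and the two families of translation domains $U_\hatep^\pm$ of Section~\ref{sec:fatou}. In the model, the two charts $S_\hatep^\pm := (Z_\hatep^\pm)\inv(U_\hatep^\pm)$ cover $D(0,r)\setminus\{\pm\sqrt\hatep\}$ and overlap along regions on which they are glued by the identity. To realize the prescribed modulus I re-glue these two charts: along the overlaps above and below the fundamental hole I compose the model transition (the $T_{\pm i\pi b(\hatep)}$ of Proposition~\ref{prop:time}), transported into the time coordinate through the model Fatou coordinates, with the prescribed $\Psi_\hatep^\infty$ and $\Psi_\hatep^0$, while keeping the internal identifications by the periods $\alpha_\hatep^\pm$ of \eqref{eq:periode}. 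Because $\Psi_\hatep^{0,\infty}$ commute with $T_1$, the model dynamics $G_\hatep$ descends to a self-map of the resulting abstract complex surface $\mathcal N_\hatep$; filling in the two punctures turns $\mathcal N_\hatep$ into an abstract disc with two marked points. Since $\Psi_\hatep^{0,\infty}$ depend holomorphically on $\hatep\neq0$ with continuous limit at $\hatep=0$, the family $\mathcal N=\bigcup_{\hatep\in\Opi}\{\hatep\}\times\mathcal N_\hatep$ is a complex $2$-manifold fibered over $\Opi$ with the same regularity.

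The analytic core is to uniformize this family: to produce a fibered biholomorphism of $\mathcal N$ onto $\Opi\times D(0,r)$ (shrinking $r$ if needed) sending the marked points to $\pm\sqrt\hatep$, holomorphic in $\hatep\neq0$ and continuous at $\hatep=0$; transporting the descended dynamics through this biholomorphism produces the germ $g_\hatep$ on $D(0,r)$. For a fixed $\hatep$ this is the classical \'Ecalle--Voronin reconstruction---reassemble the model charts according to the prescribed transition functions and uniformize the resulting disc. In the family one needs uniform estimates of Glutsyuk type as $\hatep\to0$, where the two \'Ecalle cylinders become infinitely long, together with the fact that the re-glued structure converges to the standard one because the transition functions do; this yields both the continuity at $\hatep=0$ and the identification of the limit germ $g_0$. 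I expect this uniformity near $\hatep=0$ to be the main obstacle, as it is in all results of this kind.

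It remains to make the construction uniform in $\ep$. A priori $g_\hatep$ depends on $\hatep$ in the universal cover of the punctured $\ep$-disc, so over the overlap $-\pi+\delta<\arg\hatep<\pi-\delta$ one must show $g_\hatep=g_{\hatep e^{2i\pi}}$ as germs in $\ep$. If the modulus is trivial this is immediate, as $g_\ep$ is then the normal form $v_\ep^1$. Otherwise the compatibility condition \eqref{compatibility} is exactly what makes the descent work: the maps $H_\hatep^{0,\infty}$ and $H_{\wep}^{0,\infty}$ of \eqref{def:H}--\eqref{limit_H} straighten the chosen first return maps, and \eqref{compatibility} says that the dynamics obtained for $\hatep$ and for $\wep=\hatep e^{2i\pi}$ are conjugate by translations, so that $\mathcal N_\hatep$ and $\mathcal N_{\hatep e^{2i\pi}}$ are biholomorphic over the overlap. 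By the rigidity already used in Lemma~\ref{lem:symmetries}---a biholomorphism commuting with $g_\ep$ and tending to the identity at $\ep=0$ must be the identity---this biholomorphism is the canonical identification, hence $g_\ep$ is a well defined function of $\ep$ on a full punctured disc and, being bounded, extends holomorphically across $\ep=0$. Finally, by construction $g_\ep$ has canonical parameter $\ep$ and formal invariant $b(\ep)$ (both built into $v_\ep$), fixed points at $\pm\sqrt\ep$, and transition functions equivalent to the prescribed $\Psi_\hatep^{0,\infty}$, so it realizes the given modulus.
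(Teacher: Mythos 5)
There is no internal proof to compare your argument against: Theorem~\ref{theo:real_hol} is quoted verbatim from \cite{realisation} and is used in this paper purely as a black box in the proof of the antiholomorphic Realisation Theorem. The relevant comparison is therefore with the cited work, and your outline does reproduce its overall strategy: glue the time charts of the model $v_\ep$ according to the prescribed $(\Psi_\hatep^0,\Psi_\hatep^\infty)$ while keeping the identifications by the periods $\alpha_\hatep^\pm$, obtain an abstract complex surface fibered over $\Opi$ carrying the model dynamics, uniformize it fiberwise to land back on a fixed disc, and use the compatibility condition \eqref{compatibility}, via the normalized maps of \eqref{def:H}--\eqref{limit_H}, to descend from $\hatep\in\Opi$ to the true parameter $\ep$.

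As a proof, however, your text has a genuine gap, and it sits exactly where the content of the theorem lies. The ``analytic core'' you describe --- a fibered biholomorphism from the glued manifold $\mathcal N$ onto $\Opi\times D(0,r)$, holomorphic in $\hatep\neq 0$, continuous up to $\hatep=0$, sending the marked points to $\pm\sqrt{\hatep}$ --- is asserted (``I expect this uniformity near $\hatep=0$ to be the main obstacle'') rather than established. The degeneration as $\hatep\to 0$, where the periods $\alpha_\hatep^\pm$ blow up and the translation domains become infinitely long, is precisely what requires the heavy machinery of \cite{realisation} (uniform estimates and quasiconformal/Ahlfors--Bers-type arguments with parameter, or an equivalent construction) to obtain uniform control, the continuity at $\hatep=0$, and the identification of the limit germ $g_0$; without that step you have reduced the theorem to itself. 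Two secondary points would also need attention: in the descent over the overlap you must normalize the conjugacy between the $\hatep$- and $\wep$-realizations before a rigidity argument in the spirit of Lemma~\ref{lem:symmetries} applies (that lemma requires a nontrivial transition function and a family of commuting maps tending to the identity, and the case where $\Psi_\hatep^{0,\infty}$ are translations for all $\hatep$ must indeed be treated separately, as you indicate); and you must check that the realized family is a \emph{generic} unfolding with canonical parameter $\ep$ and formal invariant $b(\ep)$, i.e.\ that the multipliers at the fixed points are those dictated by the periods, so that the prescribed triple is recovered as its modulus and not merely some equivalent data.
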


In the case of a germ of parabolic family of the form $g_\ep=f_\epbar\circ f_\ep$ we have the additional condition \eqref{eq:Psi commute hatep}, under which the compatibility condition takes a certain form. 
Indeed, we have 
\begin{align}
H_\hatep^0&=T_{\frac{i\pi}{\sqrt\hatep}}\circ T_{\frac12}\circ \Sigma\circ H_{\ol\hatep}^\infty\circ \Sigma\circ T_{-\frac12}\circ T_{-\frac{i\pi}{\sqrt{\hatep}}},\label{H_hatep^0}\\
H_\wep^0&= T_{\frac{i\pi}{\sqrt{\hatep}}}\circ  T_{\frac12}\circ \Sigma\circ H_{\ol\wep}^\infty\circ \Sigma\circ T_{-\frac12}\circ T_{-\frac{i\pi}{\sqrt{\hatep}}}.\label{H_wep^0}
\end{align}
If we let $N_{\hatep}=T_{\frac{i\pi}{\sqrt{\hatep}}} \circ T_{\frac12}\circ \Sigma$, the compatibility condition then takes the form: there exist $D_\ep, D_\ep'$ for $\arg\ep\in (-\pi + \delta, \pi - \delta)$ such that
\begin{equation}
  H_\wep^\infty \circ N_{\hatep} \circ (H_{\ol\wep}^\infty)\inv \circ N_{\hatep}\inv
    = T_{D_\ep} \circ N_{\hatep} \circ H_{\ol\hatep}^\infty \circ N_{\hatep}\inv
      \circ (H_\hatep^\infty)\inv \circ T_{D_\ep'}.
\label{compatibility_antihol}\end{equation}

\begin{theo}[Realisation Theorem in the anti-holomorphic case] Let be given a family $(\ep, b, [\{\Psi_\hatep^\infty\}_{\hatep\in \Opi}])$, where $b$ is an analytic function of $\ep$ defined on the projection of $\Opi$ and such that $\Psi_\hatep^\infty$ depends analytically on $\hatep\in \Opi$ with continuous limit when $\hatep\to 0$. Let $\Psi_\hatep^0$ be defined through \eqref{eq:Psi commute hatep}. Let $H_\hatep^\infty$ be defined by
\begin{equation}
  \begin{cases}
    H_\hatep^\infty\circ \Psi^\infty_\hatep\circ T_{-\frac{i\pi}{\sqrt{\hatep}}}= T_{-\alpha_\hatep^+}\circ H_\hatep^\infty,
      & \text{if $\arg\hatep\in (-\pi+\delta,\pi -\delta)$;}\\[2\jot]
    H_{\hatep}^\infty\circ T_{-\frac{i\pi}{\sqrt{{\hatep}}}}\circ \Psi^\infty_{\hatep}= T_{-\alpha_{\hatep}^+}\circ H_{\hatep}^\infty,
      & \text{if $\arg\hatep\in (\pi + \delta, 3\pi - \delta)$;}
  \end{cases}
\label{nouv_def_H}\end{equation}
and satisfying 
$$
  \lim_{\Im W\to +\infty} (H_\hatep^\infty - {\rm id}) = 0. 
$$
%\eqref{limit_H}. 
Suppose $H_{\hatep}^{\infty}$  satisfies the antiholomorphic compatibility condition, namely there exist $D_\ep, D_\ep'$ for $\arg\ep\in(-\pi+\delta, \pi-\delta)$ such that 
\begin{equation}
H_{\hatep e^{2i\pi}}^\infty \circ N_\hatep \circ (H_{\ol{\hatep e^{2i\pi}}}^\infty)\inv \circ N_\hatep\inv
  = T_{D_\ep} \circ N_\hatep \circ H_{\ol\hatep}^\infty \circ N_\hatep\inv
  \circ (H_\hatep^\infty)\inv \circ T_{D_\hatep'}.
\label{compatibility_antihol_bis}\end{equation}
Then there exists a germ of generic antiholomorphic family $f_\ep$ unfolding an antiholomorphic parabolic germ and realizing this strong modulus. 
\end{theo}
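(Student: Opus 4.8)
The plan is to reduce the statement to the holomorphic Realisation Theorem~\ref{theo:real_hol} combined with the Antiholomorphic Square-Root Theorem~\ref{thm:square_root}, following the strategy sketched at the start of Section~\ref{sec:realisation}. First I would complete the given data to a candidate modulus of a holomorphic parabolic family by letting $\Psi_\hatep^0$ be defined through~\eqref{eq:Psi commute hatep}; its analyticity in $\hatep\in\Opi$, with continuous limit at $\hatep=0$, is inherited from that of $\Psi_\hatep^\infty$ together with the anti-analyticity of the reflection $\hatep\mapsto\ol\hatep$ on $\Opi$ (two anti-holomorphic dependences compose to a holomorphic one). This produces a candidate triple $(\ep,b,[\{(\Psi_\hatep^0,\Psi_\hatep^\infty)\}_{\hatep\in\Opi}])$ of the form~\eqref{modulus_parabolic}, and the task becomes to verify that it satisfies the compatibility condition~\eqref{compatibility} required by Theorem~\ref{theo:real_hol}.

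The heart of the argument is this verification. For a triple arising from a family of the form $g_\ep=f_\epbar\circ f_\ep$, the relation~\eqref{eq:Psi commute hatep} forces the auxiliary normalizers $H_\hatep^0$ and $H_{\wep}^0$ of~\eqref{def:H} to be the conjugates~\eqref{H_hatep^0}--\eqref{H_wep^0} of $H_{\ol\hatep}^\infty$ and $H_{\ol\wep}^\infty$ by $N_\hatep$. I would prove this by conjugating the defining relations~\eqref{nouv_def_H} for $H^\infty$ by $N_\hatep$ and invoking~\eqref{eq:Psi commute hatep} together with the symmetry $\Sigma(\alpha_{\ol\hatep}^\pm)=-\alpha_\hatep^\mp$ from~\eqref{properties:alpha}: the resulting map satisfies both the defining relation and the normalization~\eqref{limit_H} that characterize $H_\hatep^0$ (resp.\ $H_{\wep}^0$), hence equals it by the uniqueness clause. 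Substituting~\eqref{H_hatep^0}--\eqref{H_wep^0} into~\eqref{compatibility} and simplifying turns it precisely into the antiholomorphic compatibility condition~\eqref{compatibility_antihol_bis}, which is the hypothesis; hence~\eqref{compatibility} holds.

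With~\eqref{compatibility} in hand, Theorem~\ref{theo:real_hol} yields a germ of generic analytic family $g_\ep$ unfolding a holomorphic parabolic germ of codimension~$1$ with modulus $(\ep,b,[\{(\Psi_\hatep^0,\Psi_\hatep^\infty)\}_\hatep])$. Since $\Psi_\hatep^0$ and $\Psi_\hatep^\infty$ satisfy~\eqref{eq:Psi racine antihol} (which is~\eqref{eq:Psi commute hatep}), Theorem~\ref{thm:square_root} then provides a family $\{f_\ep\}_\ep$ depending antiholomorphically on $\ep$ with $f_\epbar\circ f_\ep=g_\ep$, hence $f_\ep\circ f_\ep=g_\ep$ for $\ep$ real. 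It remains to check that $f_\ep$ is a \emph{generic} unfolding of an antiholomorphic parabolic germ of codimension~$1$ with the prescribed strong modulus: $f_0$ is an antiholomorphic parabolic germ of codimension~$1$ because $g_0=f_0\circ f_0$ has a holomorphic parabolic point of codimension~$1$; the canonical parameter of $f_\ep$ equals $\ep$ and its formal invariant equals $b$ by Theorem~\ref{theo:ep canonique}, since both are computed from the multipliers of $g_\ep$ at its fixed points, which are prescribed, and genericity follows because $\ep$ is then a genuine unfolding parameter. Finally, retracing the construction of $f_\ep$ in the proof of Theorem~\ref{thm:square_root}, the strongly normalized Fatou coordinates of $g_\ep$ realising $(\Psi_\hatep^0,\Psi_\hatep^\infty)$ are Fatou coordinates of $f_\ep$ whose transition function is $\Psi_\hatep^\infty$, so by Theorem~\ref{theo:strong class} the strong modulus of $f_\ep$ is the given one.

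The main obstacle I expect is the second paragraph: establishing that the antiholomorphic compatibility condition~\eqref{compatibility_antihol_bis} is equivalent, for the completed pair, to the holomorphic condition~\eqref{compatibility}. This requires careful bookkeeping of which normalization ($\Im W\to+\infty$ versus $\Im W\to-\infty$) and which subsector of $\Opi$ each of $H_\hatep^0,H_\hatep^\infty,H_{\wep}^0,H_{\wep}^\infty$ lives on, and of how the reflection $\hatep\mapsto\ol\hatep$ and the conjugation by $N_\hatep$ interchange them; the identities~\eqref{properties:alpha} and the fact that $\Sigma$ conjugates the relevant translations are exactly what make the uniqueness clauses line up, but the argument is delicate.
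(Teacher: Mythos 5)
Your proposal is correct and follows essentially the same route as the paper: define $\Psi_\hatep^0$ via~\eqref{eq:Psi commute hatep}, identify $H_\hatep^0$ and $H_{\wep}^0$ with the $N_\hatep$-conjugates of $H_{\ol\hatep}^\infty$ and $H_{\ol\wep}^\infty$ as in~\eqref{H_hatep^0}--\eqref{H_wep^0} so that the hypothesis~\eqref{compatibility_antihol_bis} yields the holomorphic compatibility condition~\eqref{compatibility}, then realize $g_\ep$ by Theorem~\ref{theo:real_hol} and extract $f_\ep$ by Theorem~\ref{thm:square_root}. The extra verifications you include (analyticity of $\Psi_\hatep^0$, the uniqueness/normalization bookkeeping, and that $f_\ep$ is generic with the prescribed strong modulus) are details the paper leaves implicit, not a different method.
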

\begin{proof} We want to realize $f_\ep$. We first realize $g_\ep$ with modulus $(\ep, b, [\{(\Psi_\hatep^\infty,\allowbreak\Psi_\hatep^0)\}_{\hatep\in \Opi}])$, and then extract $f_\ep$ as an antiholomorphic square root satisfying $g_\ep:=f_\epbar\circ f_\ep$ using Theorem~\ref{thm:square_root}. 
Let us denote $\wep = \hatep e^{2i\pi}$. The functions $H_\hatep^0$ and $H_\wep^0$, defined in~\eqref{H_hatep^0} and~\eqref{H_wep^0} respectively, and $H_\hatep^\infty$ and $H_\wep^\infty$, defined by~\eqref{nouv_def_H}, satisfy \eqref{def:H} and \eqref{compatibility}, so by Theorem~\ref{theo:real_hol} we can realize $g_\ep$ and then take its antiholomorphic square root by Theorem~\ref{thm:square_root}. \end{proof}

\begin{rem}
For $\arg\hatep = 0$ and $\arg\wep = 2\pi$, the compatibility condition~\eqref{compatibility_antihol} becomes 
\begin{equation}
H_\wep^\infty\circ N_\ep\circ (H_\hatep^\infty)\inv \circ N_\ep\inv=T_{D_\ep}\circ N_\ep \circ H_\wep^\infty\circ N_\ep\inv\circ(H_\hatep^\infty)\inv\circ T_{D_\ep'}.\label{compatibility_antihol_0}\end{equation}
Indeed, in this cas $\ol{\hatep} = \wep$. This is a necessary condition
for the realisation.
\end{rem}

%In the case of a germ of parabolic family of the form $g_\ep=f_\epbar\circ f_\ep$ we have the additional condition \eqref{eq:Psi commute hatep}, under which the compatibility condition takes a certain form. 
%Indeed, when $\arg \hatep=0$, we have 
%\begin{align}H_\hatep^0&= T_{\frac12}\circ \Sigma\circ T_{-\frac{i\pi}{\sqrt{\hatep}}}\circ H_\wep^\infty\circ T_{\frac{i\pi}{\sqrt{\hatep}}}\circ \Sigma\circ T_{-\frac12},\label{H_hatep^0}\\
%H_\wep^0&=T_{\frac12}\circ \Sigma\circ T_{-\frac{i\pi}{\sqrt{\hatep}}}\circ H_\hatep^\infty\circ T_{\frac{i\pi}{\sqrt{\hatep}}}\circ \Sigma\circ T_{-\frac12}.\label{H_wep^0}\end{align}
%If we let $N_\ep=T_{\frac12}\circ \Sigma\circ T_{-\frac{i\pi}{\sqrt{\hatep}}}$, the compatibility condition then takes the form: there exist $D_\ep, D_\ep'$ for $\arg\ep=0$ such that
%\begin{equation}
%H_\wep^\infty\circ N_\hatep\circ (H_\hatep^\infty)\inv \circ N_\hatep\inv=T_{D_\ep}\circ N_\hatep \circ H_\wep^\infty\circ N_\hatep\inv\circ(H_\hatep^\infty)\inv\circ T_{D_\ep'}.\label{compatibility_antihol}\end{equation}

\subsection{Germs of Families with an Invariant Real Analytic Curve}

It was shown in \cite{GR} that an antiholomorphic parabolic germ preserves a real analytic curve if and only if $\Psi_0^\infty$ commutes with $T_{\frac12}$, which is a condition of infinite codimension. This means that it is exceptional that an antiholomorphic parabolic germ preserves a real analytic curve. Nonetheless, the real axis is invariant in  the formal normal form. 
Where is the obstruction?

Let us look at a generic unfolding. For $\eps>0$, the germ is analyticallly conjugate to the normal form in the neighborhood of each fixed point $\pm\sqrt{\ep}$ or, equivalently conjugate to $\sigma$ composed with the time-$\frac12$ of $\dot z = \lambda_\pm (z\mp\sqrt{\ep})$, where $\lambda_\pm=\frac{2\sqrt{\ep}}{1+b(\ep)\sqrt{\ep}}.$
The flow lines of the vector field are of the form $\cos\theta\, y-\sin\theta\, (x\pm \sqrt{\ep})=0$ for some $\theta\in [0,\pi)$. 
Among these flow lines exactly one is fixed by $\sigma$. In a generic unfolding there is no reason why these local invariant lines would match globally. If this mismatch persists till the limit, then we expect at the limit some $\Psi_0^\infty$ that does  not commute with $T_{\frac12}$.

\clearpage
\bibliographystyle{plain}
\bibliography{ref}

\end{document}